\documentclass[12pt]{amsart}

\usepackage[T1]{fontenc}
\usepackage[utf8]{inputenc}
\usepackage{lmodern}
\usepackage{amsmath,amssymb,amsthm,mathtools}
\usepackage{enumitem}
\usepackage{booktabs}
\usepackage{microtype}
\usepackage[a4paper,margin=1in]{geometry}
\usepackage{comment}
\usepackage{tikz}
\usetikzlibrary{arrows.meta, positioning, calc}
\usepackage{float}

\usepackage{xcolor}
\usepackage[
    colorlinks=true,
    linkcolor=blue!60!black,
    citecolor=blue!60!black,
    urlcolor=blue!60!black
]{hyperref}

\numberwithin{equation}{section}
\setlist[enumerate]{leftmargin=*,itemsep=2pt,topsep=4pt}
\setlist[itemize]{leftmargin=*,itemsep=2pt,topsep=4pt}
\newtheorem{theorem}{Theorem}[section]
\newtheorem*{theorem*}{Theorem}
\newtheorem{proposition}[theorem]{Proposition}
\newtheorem{lemma}[theorem]{Lemma}
\newtheorem{corollary}[theorem]{Corollary}
\theoremstyle{definition}
\newtheorem{definition}[theorem]{Definition}
\newtheorem{example}[theorem]{Example}
\theoremstyle{remark}
\newtheorem{remark}[theorem]{Remark}

\theoremstyle{plain}
\newtheorem{teoremaletra}{Theorem}

\newcommand{\Omn}{\mathcal{O}_{m,n}}

\newcommand{\KMS}{\mathrm{KMS}}
\newcommand{\C}{\mathbb C}

\newcommand{\R}{\mathbb R}
\newcommand{\T}{\mathbb T}

\newcommand{\id}{\operatorname{id}}

\newcommand{\Tr}{\operatorname{Tr}}
\newcommand{\Ad}{\operatorname{Ad}}

\title[KMS and ground states for $\Omn$]{KMS and ground states for the normalized \\ dynamics of the tame $C^*$-algebras $\mathcal{O}_{m,n}$}

\author{Paulo R.\ Pinto}
\address[Paulo R.\ Pinto]{Department of Mathematics and Center for
	Mathematical Analysis, Geometry and Dynamical Systems, CAMGSD,  Instituto Superior Técnico, University of Lisbon, Av. Rovisco Pais 1, 1049-001, Lisboa, Portugal.} 
\email{ppinto@math.tecnico.ulisboa.pt}

\author{Filipe Viseu}  
\address[Filipe Viseu]{Department of Mathematics and Center for
	Mathematical Analysis, Geometry and Dynamical Systems, CAMGSD, Instituto Superior Técnico, University of Lisbon, Av. Rovisco Pais 1, 1049-001, Lisboa, Portugal.}  
\email{filipe.viseu@tecnico.ulisboa.pt}  
\date{\today}

\subjclass[2020]{Primary 46L30; Secondary 46L55, 37B20, 82B10}
\keywords{KMS state: ground state: tame $C^*$-algebra; dynamical system of type 
$(m,n)$; partial isometry; full corner;
Fell bundles; polytope}

\begin{document}

\begin{abstract}
We study $\KMS$ and ground states of the tame $C^*$-algebras $\mathcal{O}_{m,n}$ of Ara, Exel and Katsura under the dynamics $\sigma_t(s_i)=m^{it}s_i$, $\sigma_t(t_j)=n^{it}t_j$. Normalized restriction to the full corner $A=q\mathcal{O}_{m,n}q$, generated by the partial isometries $v_{ij}=s_i^{*}t_j$ of common energy $\log(n/m)$, is at each inverse temperature $\beta$ an affine weak-$*$ homeomorphism of the two systems. Let $m,n\ge 2$. When $m=n$ the corner dynamics is trivial, so the $\KMS$ simplices are identified with trace simplices. When $m\neq n$, $\KMS$ states occur only at $\beta=1$, and there they are not unique. Conformal branching models surject the $\KMS_1$ simplex onto a transportation polytope of dimension $(m-1)(n-1)$, whose vertices lift to extremal, hence factorial, $\KMS_1$ states, the fibre over each vertex being itself the $\KMS_1$ simplex of a
quotient of $A$.
\end{abstract}

\maketitle

\tableofcontents

\section{Introduction}
\label{sec:introduction}
 
The $\KMS$ condition originates in the work of Kubo and of Martin and
Schwinger \cite{Kubo1957,MartinSchwinger1959} in quantum statistical
mechanics, and was formulated in operator-algebraic terms by Haag, Hugenholtz
and Winnink \cite{HHW1967}, who proposed it as the defining property of
equilibrium states. Since the 1970s, the classification of $\KMS$ states has
been carried out for a wide range of $C^*$-dynamical systems. In this text, we use
the standard formulation of Bratteli--Robinson \cite{BR2}, recalled at the
beginning of Section~\ref{sec:corner}. For the Cuntz and Cuntz--Krieger
algebras, the critical inverse temperatures and the corresponding equilibrium
states are closely tied to the number of generators, the underlying transition
matrix, and Perron--Frobenius data
\cite{Cuntz1977,CK1980,OP1978,EFW1984}. More general quasi-free, graph-algebra
and partial-dynamical-system versions of this theme have been developed in
\cite{ExelLaca2003,LacaNeshveyev2004,aHLRS2013,CarlsenLarsen2016}, and groupoid
and Fell-bundle formulations describe KMS states in terms of quasi-invariant
measures and measurable isotropy data \cite{Neshveyev2013,AfsarSims2021}.
 
The $C^*$-algebras $\Omn$ were introduced by Ara, Exel and Katsura in
their study of universal dynamical systems of type $(m,n)$ \cite{AEK2013}. They
are generated by two finite families of tame partial isometries
$s_1,\ldots,s_m$, and $t_1,\ldots,t_n$, with a common initial projection
$q=s_i^*s_i=t_j^*t_j$ and two decompositions of a common range projection
$
        p=\sum_{i=1}^m s_is_i^*=\sum_{j=1}^n t_jt_j^* .
$
The full defining relations, including the tameness condition, are also
recalled at the beginning of Section~\ref{sec:corner}.\footnote{Our indexing is chosen so that there are $m$ generators $s_i$ and
$n$ generators $t_j$; the opposite convention is used for example in \cite{AEK2013},
where the two families are labelled by $n$ and by $m$ respectively. See 
Remark~\ref{rem:mn} below.} These algebras lie at
the intersection of several themes in operator algebras, including Leavitt-type
relations, partial crossed products, Fell bundles, separated graphs \cite[Example~6.6]{AraLolk2018}, and interaction crossed
products \cite{Exel2018}. They are also far from tractable by classification
machinery: when $m,n\ge2$, the tame universal algebra is non-exact, whereas its
reduced regular counterpart $\mathcal O^r_{m,n}$ is exact but non-nuclear \cite{AEK2013}. Throughout we work with the full algebra. Unlike its reduced counterpart, it
is defined by a universal property, which we use repeatedly to construct
representations.

As these $C^*$-algebras may be viewed as a two-sided generalization of the Cuntz
algebras, there is a natural choice of dynamics, namely
$\sigma_t(s_i)=m^{it}s_i$ and $\sigma_t(t_j)=n^{it}t_j$, where each family of
generators is scaled according to its cardinality, as in the standard gauge
dynamics on the Cuntz algebra \cite{OP1978}. In the case of the Cuntz algebra
$\mathcal{O}_n$, it is known that this dynamics admits a unique $\KMS$ state,
at inverse temperature $\beta=1$ in the present normalization, while no ground
states exist. The question that motivates this paper is what happens when the
same type of dynamics is considered simultaneously on the two families of
generators of $\Omn$. As it turns out, the answer depends sharply on whether
$m=n$, and already this particular choice leads to phenomena quite different
from the rigid picture for the Cuntz algebras; we do not yet have a complete
description of the resulting equilibrium states. 

In \cite{AEK2013}, the $C^*$-algebras $\Omn$ are realized as partial crossed products $\Omn\cong C(\Omega^{u})\rtimes_{\theta^{u}} \mathbb F_{m+n}$ of a partial action $\theta^{u}$ of the free group $\mathbb F_{m+n}$ on the universal $(m,n)$--dynamical system. This suggests that the KMS states might be studied using the results of \cite{ExelLaca2003}. However, the classification theorem in \cite[Theorem~4.3]{ExelLaca2003} assumes that the partial action is orthogonal, whereas the partial action considered here clearly does not satisfy this condition; see \cite[Definition~4.2]{ExelLaca2003} and \cite[Section~2]{AEK2013}. One could alternatively try a groupoid approach. In this setting, by
\cite[Theorem~1.3]{Neshveyev2013}, the $\KMS_\beta$ states correspond to pairs
consisting of a probability measure on $\Omega^u$ that is quasi-invariant with
Radon--Nikodym cocycle $e^{-\beta c}$, together with an measurable
field of traces satisfying certain conditions. For our purposes this is more a reformulation
than a classification, as it does not produce the measure explicitly,
whereas the conformal models of Section~\ref{sec:regular-models} do, and it
leaves the remaining problem essentially untouched. As Neshveyev himself observes, this description is hardly a simplification
when the dynamics is complicated and the isotropy is large \cite[Introduction]{Neshveyev2013}. We therefore do not pursue the groupoid approach further, and work instead
directly with the algebra.

The organizing result of the paper is Theorem~\ref{thm:A}, proved in
Section~\ref{ss:corner-algebra}. The defining relations of $\Omn$ make $q$
equivalent to each of the projections $s_is_i^*$, so $q$ is full and $\Omn$ is a
matrix algebra over its corner. Writing
$
        A=q\Omn q,$ $
        v_{ij}=s_i^*t_j,$ and $
        \varepsilon=\log(n/m),
$
the corner is the universal unital $C^*$-algebra generated by a tame family of
$mn$ partial isometries subject to
\begin{equation}\tag{$*$}\label{eq:corner-relations-intro}
        \sum_{j=1}^n v_{ij}v_{ij}^*=q\quad(1\le i\le m),
        \qquad
        \sum_{i=1}^m v_{ij}^*v_{ij}=q\quad(1\le j\le n),
\end{equation}
and the normalized dynamics restricts to an action
$\sigma^A\colon\mathbb R\to\operatorname{Aut}(A)$ with
$\sigma^A_t(v_{ij})=e^{it\varepsilon}v_{ij}$. Two families carrying two energies
are thus replaced by a single family carrying one. Theorem~\ref{thm:A} below shows
that $\varphi\mapsto(1+m^{1-\beta})\varphi|_A$ is an affine weak-$*$
homeomorphism of $\KMS_\beta$ simplices at every inverse temperature, and that
restriction itself is one of ground-state spaces whenever $(m,n)\neq(1,1)$.

The rigid regimes are those in which $(A,\sigma^A)$ is trivial or classical
(Sections~\ref{ss:diagonal}--\ref{ss:ground}). In the diagonal case $m=n=:r\ge2$ we have $\varepsilon=0$, so the
corner $\KMS$ states are precisely the traces at every inverse
temperature; consequently the $\KMS_\beta$ simplex of $O_{r,r}$ is
canonically affinely homeomorphic to its trace simplex for every
$\beta\in\mathbb R$. The dynamics is inner, implemented by
$r^{it}p+q$, and explicit finite-dimensional representations settle
existence and non-uniqueness at every $\beta$. In the boundary cases
\eqref{eq:corner-relations-intro} degenerates: for $m=1$ into the Cuntz
relations, hence $A\cong\mathcal O_n$ and
$\mathcal O_{1,n}\cong M_2(\mathcal O_n)\cong\mathcal O_{n,1}$ \cite{AEK2013} with amplified
gauge dynamics; for $(m,n)=(1,1)$ into
$\mathcal O_{1,1}\cong M_2(C(\mathbb T))$ with trivial action. At zero
temperature and $m\neq n$, every ground state of the corner must vanish on $q$,
so there are none and the dynamics is not approximately inner
\cite[Theorem~2.3]{PowersSakai}; for $m=n$ the ground states are exactly those
supported on $A$.

Section~\ref{sec:asymmetric} treats $m,n\ge2$. For $m\neq n$ we have
$\varepsilon\neq0$, and we will show that KMS states can occur only at $\beta=1$. Existence is proved
in Section~\ref{sec:regular-models} by constructing \textit{conformal branching models}
from two finite measurable decompositions of a probability space, a construction
that applies equally when $m=n$. The first-level intersection data of such a
model are the entries of the 2-way transportation polytope
$$
\mathcal{P}:=\left\{ C=(c_{ij})_{ij}\in M_{m\times n}(\mathbb{R}):
        c_{ij}\ge0, \
        \sum_j c_{ij}=\frac1m, \
        \sum_i c_{ij}=\frac1n \right\},
$$
whose row and column constraints are precisely
\eqref{eq:corner-relations-intro}, rescaled by $m$ and $n$, and whose dimension
is $(m-1)(n-1)$. Distinct matrices of $\mathcal P$ are realized by distinct
models and separated by the values of the overlap projections $f_{ij}$ of the
ranges $s_is_i^*$ and $t_jt_j^*$; since these values are defined for every
$\KMS_1$ state, they yield an affine, weak-$*$ continuous surjection $\Phi$ from
the $\KMS_1$ simplex onto $\mathcal P$, and in particular non-uniqueness
(Section~\ref{sec:nonunique}).
Section~\ref{sec:fibres} shows that every vertex of $\mathcal P$ is the image of
an extremal, hence factorial, $\KMS_1$ state, and that distinct vertices give
pairwise disjoint states. Nevertheless, $\Phi$ is never injective: for $\max(m,n)\ge3$ the polytope
$\mathcal P$ has $mn$ facets and is therefore not a simplex, whereas the
$\KMS_1$ states form one; for $(m,n)=(2,2)$, two distinct states arising
from the finite-dimensional representations constructed before already have
the same image under $\Phi$. Writing $Z$ for the set of indices at which a vertex $V$ vanishes,
$\Phi^{-1}(V)$ is affinely homeomorphic to the $\KMS_1$ simplex of the quotient
of $A$ by the ideal generated by $\{v_{ij}:(i,j)\in Z\}$; when $m=n=r\ge2$ these
quotients are all isomorphic to $C^*(\mathbb F_r)$, so each of the $r!$ fibres
is identified with $\operatorname{Tr}(C^*(\mathbb F_r))$.

By Theorem~\ref{thm:A}, whether the states constructed here exhaust the
$\KMS_1$ simplex is the question of classifying the $\KMS_1$ states of
$(A,\sigma^A)$; it remains open, as does the structure of the fibres of $\Phi$
away from the vertices. A natural next step is the higher-level data attached to
words of increasing length in the $v_{ij}$, which may lead to a hierarchy of
transportation polytopes, as motivated in Example~\ref{ex:fibre}. Moreover, our arguments for the existence of the factorial $\mathrm{KMS}_1$ states in
Theorem~\ref{thm:E}(d) are essentially combinatorial, relying only on the
extremality properties of the vertices of the polytope $\mathcal P$. Therefore, we leave the determination of the von Neumann type of the corresponding factors for further investigation.

Finally, one more word on the choice of dynamics. For the general action
$\sigma_t(s_i)=e^{it\lambda_i}s_i$, $\sigma_t(t_j)=e^{it\kappa_j}t_j$, the
corner reduction of Theorem~\ref{thm:A} holds  with the weights
$d_i=e^{\lambda_i}$, the corner generators then having energies
$\kappa_j-\lambda_i$, and the constraint $\beta=1$ is replaced by the partition
identity \eqref{eq:partition}. Two points, however, are sensitive to the
normalization: the transportation
polytope of Section~\ref{sec:nonunique} is central only for uniform weights,
and for general margins it may be a simplex, so that the facet argument for the
non-injectivity of $\Phi$ breaks down; and the ground-state analysis of
Section~\ref{ss:ground} uses that all corner energies have the same sign, which
for general weights need not hold. We therefore work throughout with the normalized dynamics, which is where
our interest originated and where both results hold in full.

\subsection{Main results}
 
We now state our main results on the classification of KMS and ground states
for the normalized dynamics
\begin{equation}\label{eq:normalized-action}
\sigma_t(s_i)=m^{it}s_i,\qquad
\sigma_t(t_j)=n^{it}t_j
\end{equation}
on $\Omn$. Theorem~\ref{thm:A} is the reduction to the corner
$q\Omn q$ on which everything else rests; from it the rigid regimes
follow according to: whether the corner energy $\varepsilon=\log(n/m)$ vanishes, namely the
diagonal case $m=n\ge2$ in Theorem~\ref{thm:B}; the boundary cases in which $m$
or $n$ equals $1$ in Theorem~\ref{thm:C}; and the ground states in all cases in
Theorem~\ref{thm:D}. The transportation polytope and its consequences is the subject of
Theorem~\ref{thm:E} and occupies Section~\ref{sec:asymmetric}.
Table~\ref{table-results} summarizes the picture, and the pointers in each item
indicate where the corresponding statement is proved.
 
\begin{teoremaletra}[Reduction to the corner]\label{thm:A}
Let $m,n\ge1$, let $\sigma$ denote the normalized dynamics
\eqref{eq:normalized-action}, and put
$A=q\Omn q$, $v_{ij}=s_i^{*}t_j,$ and $\varepsilon=\log(n/m).
$
\begin{enumerate}[label=\textup{(\alph*)}]
\item The family $\{v_{ij}\}$ is tame, generates $A$, and
satisfies the corner relations
\[
        \sum_{j=1}^n v_{ij}v_{ij}^{*}=q\quad(1\le i\le m),
        \qquad
        \sum_{i=1}^m v_{ij}^{*}v_{ij}=q\quad(1\le j\le n).
\]
Moreover $A$ is universal for them: if $\{w_{ij}\}$ is a tame family satisfying
the same relations in a unital $C^*$-algebra $B$, there is a unique unital
$*$-homomorphism $A\to B$ with $v_{ij}\mapsto w_{ij}$
(Propositions~\ref{prop:corner-generators} and~\ref{prop:Auniversal}).

\item The dynamics $\sigma$ restricts to a strongly
continuous action $\sigma^A$ on $A$, and all generators have the same energy,
$
        \sigma^A_t(v_{ij})=e^{it\varepsilon}v_{ij}
$
(Proposition~\ref{prop:corner-generators}).

\item $q$ is a full projection, and
$x\mapsto(u_a^{*}xu_b)_{a,b}$, with $u_0=q$ and $u_i=s_i$, is a
$*$-isomorphism $\Omn\cong M_{m+1}(A)$ carrying $\sigma$ to $\sigma^A$ twisted
by the weights $(1,m,\dots,m)$; interchanging the two families gives
$\Omn\cong M_{n+1}(A)$ with the weights $(1,n,\dots,n)$
(Proposition~\ref{prop:matrix-picture}).

\item For every $\beta\in\mathbb R$, writing
$Z_\beta=1+m^{1-\beta}$, the map $\varphi\mapsto Z_\beta\varphi|_A$ is an affine
weak-$*$ homeomorphism from the $\KMS_\beta$ simplex of $(\Omn,\sigma)$ onto
that of $(A,\sigma^A)$, with inverse
\[
        \psi\longmapsto
        \frac1{Z_\beta}\Big(\psi(q\cdot q)
        +m^{-\beta}\sum_{i=1}^m\psi(s_i^{*}\cdot s_i)\Big)
\]
(Theorem~\ref{thm:corner-reduction}).

\item If $(m,n)\neq(1,1)$, restriction to $A$ is
likewise an affine homeomorphism from the ground-state space of $(\Omn,\sigma)$
onto that of $(A,\sigma^A)$, with inverse $\psi\mapsto\psi(q\cdot q)$
(Proposition~\ref{prop:corner-ground}).
\end{enumerate}
\end{teoremaletra}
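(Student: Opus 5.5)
The proof is a sequence of algebraic verifications in $\Omn$, followed by a standard corner-reduction argument for $\KMS$ states. Everything is organised around the identity $q=s_i^*s_i=t_j^*t_j$ and the two decompositions of $p$.

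\textbf{Parts (a)--(c).} First compute
\[
\sum_j v_{ij}v_{ij}^*=s_i^*\Big(\sum_j t_jt_j^*\Big)s_i=s_i^*ps_i=q ,
\]
and symmetrically $\sum_i v_{ij}^*v_{ij}=q$. Tameness of $\{v_{ij}\}$ holds because the $v_{ij}$ are products of the tame generators. Next show that $A=q\Omn q$ is generated by the $v_{ij}$. Any word in $s_i,t_j$ and their adjoints that is compressed by $q$ has degree $0$ in the $\mathbb Z$-grading which counts $s$ as $+1$ and $t$ as $-1$; this grading is forced by where $p$ and $q$ can sit. Using $q\le$ source projections and $p$ as range, such a word can be rewritten as a product of the $s_i^*t_j$ and their adjoints, inserting $p=\sum_j t_jt_j^*$ where necessary.

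Part (b) is immediate from the definition of $\sigma$:
\[
\sigma_t(s_i^*t_j)=m^{-it}n^{it}s_i^*t_j=e^{it\varepsilon}v_{ij}.
\]
Strong continuity is inherited from $\sigma$.

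For (c), first check fullness of $q$: $p=\sum_i s_iqs_i^*$ and $1=p+q$, assuming, as the matrix picture indicates, that $1=p+q$ with $pq=0$. Then $\{u_0=q,\ u_i=s_i\}$ is a family with $u_a^*u_b=\delta_{ab}q$ and $\sum_a u_au_a^*=1$. The standard isomorphism $x\mapsto(u_a^*xu_b)$ onto $M_{m+1}(q\Omn q)$ follows, and since $\sigma_t(u_a)=\lambda_a^{it}u_a$ with $\lambda=(1,m,\dots,m)$, the induced dynamics is $(x_{ab})\mapsto(\lambda_a^{-it}\lambda_b^{it}\sigma^A_t(x_{ab}))$. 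The $t$-version is identical.

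Universality in (a) is the step I expect to be the main obstacle. Given a tame family $w_{ij}$ in $B$ satisfying the corner relations, I would build the target
\[
M_{m+1}(B)\ni S_i=e_{i0}\otimes 1,\qquad T_j=\sum_i e_{i0}\otimes w_{ij}.
\]
One checks $S_i^*S_i=e_{00}=T_j^*T_j$ (this uses the column relation) and $\sum_jT_jT_j^*=\sum_{i,i'}e_{ii'}\otimes\sum_jw_{ij}w_{i'j}^*$. Here the \emph{off-diagonal} terms must vanish. Tameness alone does not give this, so I would refine the target: use $M_{m+1}(B)$ with $T_j$ defined through isometric dilations, or verify that orthogonality of the ranges $w_{ij}w_{ij}^*$ for fixed $i$ and distinct $j$ and the relation $\sum_j w_{ij}w_{ij}^*=1$ fit the $(m,n)$-structure. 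If the direct construction fails, I would fall back on a corner-of-universal-algebra argument: show that $\Omn$ is the linking algebra of the universal corner by checking that the relations of $\Omn$ are equivalent to the corner relations plus matrix units. Tameness of $S_i,T_j$ must then be derived from tameness of $w$, which I would check on the generating semigroup.

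\textbf{Parts (d)--(e).} For (d) use the matrix picture. A $\KMS_\beta$ state $\varphi$ on $M_{m+1}(A)$ with the twisted dynamics satisfies
\[
\varphi(u_ixu_i^*)=m^{-\beta}\varphi(x\,u_i^*u_i)=m^{-\beta}\varphi(x)\qquad\text{for }x\in A.
\]
Hence $1=\varphi(q)+\sum_i\varphi(s_is_i^*)=(1+m\cdot m^{-\beta})\varphi(q)$, giving $\varphi(q)=1/Z_\beta$. So $Z_\beta\varphi|_A$ is a $\KMS_\beta$ state of $(A,\sigma^A)$, and $\varphi$ is determined by this restriction through the displayed inverse formula; off-diagonal corners vanish by the KMS condition, since $\sigma$-eigenvalues differ. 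Conversely, the displayed formula defines a positive normalised functional. It is $\KMS_\beta$ by the standard argument of checking the condition on the analytic elements $u_axu_b^*$. Affineness and weak-$*$ continuity of both maps are clear.

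For (e), a ground state satisfies $\varphi(u_i x u_i^*)$-type bounds: from $-i\varphi(y^*\delta(y))\ge0$ with $y=s_i^*$ one gets $\varphi(s_is_i^*)=0$ whenever $m>1$, and similarly using $t_j$ when $n>1$. So $\varphi(p)=0$ and $\varphi=\varphi(q\cdot q)$. The converse, extending a corner ground state by $\psi(q\cdot q)$, is a direct verification of the ground-state inequality on the matrix entries.
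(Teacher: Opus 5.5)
\textbf{Verdict: genuine gap in the universality half of (a).} Your architecture is the paper's: the same embedding $S_i=E_{i0}\otimes 1_B$, $T_j=\sum_i E_{i0}\otimes w_{ij}$ into $M_{m+1}(B)$, the same matrix picture, and the same diagonal identity $\varphi(s_ixs_i^*)=m^{-\beta}\varphi(x)$ for $x\in A$, giving $\varphi(q)=Z_\beta^{-1}$. In (e), your criterion $-i\varphi(y^*\delta(y))\ge0$ is a valid substitute for the paper's argument, which uses boundedness of $\varphi(s_i\sigma_z(s_i^*))=m^{-iz}\varphi(s_is_i^*)$.

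In (a), you stop exactly where the construction has to be shown to work. You suggest that the off-diagonal terms $\sum_j w_{ij}w_{i'j}^*$ $(i\ne i')$ may not vanish, and fall back on alternatives you never develop. The ``linking algebra of the universal corner'' route is just a restatement of the claim. The missing observation is that a finite sum of projections which is itself a projection has pairwise orthogonal summands. For fixed $j$, the relation $\sum_i w_{ij}^*w_{ij}=1_B$ then gives, for $i\ne k$,
\[
  w_{ij}w_{kj}^*=w_{ij}(w_{ij}^*w_{ij})(w_{kj}^*w_{kj})w_{kj}^*=0,
\]
so each off-diagonal term vanishes individually. Symmetrically, the row relation gives $w_{ij}^*w_{i\ell}=0$ for $j\ne\ell$. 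You also need this, and did not check it, to get $T_j^*T_\ell=\delta_{j\ell}Q$ with $Q=E_{00}\otimes1_B$, i.e.\ relation \eqref{rel:torth}. This step uses only the relations and the partial-isometry property, not tameness.

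Tameness of $\{S_i,T_j\}$ is also left as a promise, and it needs an argument such as the paper's:
\begin{enumerate}
\item All $S_i,T_j$ lie in $\operatorname{span}\{E_{10},\dots,E_{m0}\}\otimes B$, so a nonzero word alternates starred and unstarred letters.
\item Such a word has the form $XVY^*$ with $X,Y\in\{Q,S_i,T_j\}$, $X^*X=Y^*Y=Q$, and $V=E_{00}\otimes u$, where $u$ is a word in the $w_{ij}$ and their adjoints (or $u=1_B$).
\item Then $(XVY^*)^*(XVY^*)=Y(V^*V)Y^*$ is a projection because $u$ is a partial isometry. This is where tameness of $\{w_{ij}\}$ enters.
\end{enumerate}
Only after both steps can Proposition~\ref{prop:universal} be invoked.

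\textbf{Smaller errors.}
\begin{enumerate}
\item Generation argument. The grading with $s\mapsto+1$, $t\mapsto-1$ is not the right invariant: $v_{ij}=s_i^*t_j$ has degree $-2$ in it. What $s_i=ps_iq$, $t_j=pt_jq$ and $pq=0$ force is that a word $w$ with $qwq\ne0$ alternates starred and unstarred letters, starts with a starred letter and ends with an unstarred one. It therefore groups into pairs $s_i^*s_k$, $t_j^*t_\ell$, $s_i^*t_j$, $t_j^*s_i$.
\item Twist in (c). Your sign is reversed. Since $\Xi(s_i)=E_{i0}\otimes q$ and $\sigma_t(s_i)=m^{it}s_i$, the transported action is $(x_{ab})\mapsto\big((d_ad_b^{-1})^{it}\sigma^A_t(x_{ab})\big)$.
\item Off-diagonal entries in (d). The entries $u_ayu_b^*$ with $a\ne b$ do not vanish ``because eigenvalues differ''. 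For $a,b\ge1$ the weights coincide; for example, $s_1s_2^*\ne0$ is $\sigma$-fixed. The correct reason is the KMS identity with the $\sigma$-fixed projection $u_au_a^*$: $\varphi(u_au_a^*\cdot u_ayu_b^*)=\varphi(u_ayu_b^*\cdot u_au_a^*)=0$.
\item In (c), $p+q=1$, $pq=0$ is the defining relation \eqref{rel:pq}, not an assumption.
\end{enumerate}
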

 
\begin{teoremaletra}[The diagonal case]\label{thm:B}
Let $r\ge2$ and let $\sigma$ denote the normalized dynamics
$\sigma_t(s_i)=r^{it}s_i$, $\sigma_t(t_j)=r^{it}t_j$ on $\mathcal O_{r,r}$.
Here $\varepsilon=0$, so the corner dynamics $\sigma^A$ is trivial and the KMS
condition in $A$ is the trace identity at every inverse temperature.
Consequently, for every $\beta\in\mathbb R$ the simplex of $\KMS_\beta$ states
is canonically affinely homeomorphic, in the weak-$*$ topology, to the infinite-dimensional trace
simplex $\operatorname{Tr}(\mathcal O_{r,r})$
(Corollary~\ref{cor:corner-diagonal} and Proposition~\ref{prop:free-group-traces}), the homeomorphism being the Gibbs
correspondence
$$
        \tau\longmapsto
        \frac{r+1}{1+r^{1-\beta}}\tau\big(e^{-\beta h}\cdot\big),
        \qquad h=(\log r)p
$$
(Corollary~\ref{cor:gibbs}). In particular $\KMS_\beta$ states exist and are
non-unique at every real inverse temperature
(Corollaries~\ref{cor:findimtraces} and~\ref{cor:nonunique}).
\end{teoremaletra}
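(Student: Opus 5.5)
Theorem~\ref{thm:B} will be derived from Theorem~\ref{thm:A}(d) together with three facts about the diagonal case: the corner dynamics is trivial, the dynamics on $\mathcal O_{r,r}$ is inner, and there are enough traces.

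\emph{Step 1: the KMS condition on $A$ is the trace condition.} With $m=n=r$ we have $\varepsilon=\log(r/r)=0$. By Theorem~\ref{thm:A}(b), $\sigma^A_t(v_{ij})=v_{ij}$ for all generators, so $\sigma^A$ is the identity on $A$. Every element of $A$ is then entire analytic with $\sigma^A_{i\beta}=\id$. Hence the $\KMS_\beta$ condition $\psi(xy)=\psi(y\sigma^A_{i\beta}(x))$ reads $\psi(xy)=\psi(yx)$ for every $\beta$. Theorem~\ref{thm:A}(d) then identifies the $\KMS_\beta$ simplex of $\mathcal O_{r,r}$ affinely and homeomorphically with $\Tr(A)$.

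\emph{Step 2: from $\Tr(A)$ to $\Tr(\mathcal O_{r,r})$, and the Gibbs form.} Apply Theorem~\ref{thm:A}(d) at $\beta=0$, where the normalizing constant is $Z_0=1+r$. Since $\sigma$ acts trivially at $\beta=0$ in the KMS sense, $\KMS_0$ states are exactly traces, and $\tau\mapsto(r+1)\tau|_A$ is an affine homeomorphism $\Tr(\mathcal O_{r,r})\to\Tr(A)$.

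Next I check that the dynamics is inner. Put $u_t=r^{it}p+q$ and $h=(\log r)p$, so $u_t=e^{ith}$; here $p\perp q$ should follow from the relations, and I expect $p+q=1$ in the unital algebra. Then $u_ts_iu_t^*=r^{it}ps_iq=r^{it}s_i$, and similarly $u_tt_ju_t^*=r^{it}t_j$. Hence $\sigma_t=\Ad e^{ith}$.

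For a trace $\tau$, the standard computation shows that $\tau(e^{-\beta h}\,\cdot)$ satisfies the $\KMS_\beta$ condition. Its value at $1$ is
\[
\tau(q)+r^{-\beta}\tau(p)=\tau(q)\bigl(1+r\cdot r^{-\beta}\bigr),
\]
because $\tau(p)=\sum_i\tau(s_is_i^*)=r\tau(q)$. Since also $\tau(1)=(r+1)\tau(q)=1$, the normalizing factor is $(r+1)/(1+r^{1-\beta})$, as stated.

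Finally I verify that this Gibbs map coincides with the composite of the two corner maps. Restricted to $A=qAq$, the map gives $\frac{r+1}{Z_\beta}\tau|_A$, because $e^{-\beta h}q=q$. This agrees with $Z_\beta^{-1}\cdot(r+1)\tau|_A$ under Theorem~\ref{thm:A}(d). So the composite is canonical and is an affine weak-$*$ homeomorphism, with inverse $\varphi\mapsto\varphi(e^{\beta h}\cdot)$ normalized.

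\emph{Step 3: infinite dimension, existence and non-uniqueness.} These now reduce to exhibiting many traces on $A$. By the universality in Theorem~\ref{thm:A}(a), it suffices to produce tame families $w_{ij}$ of partial isometries in finite-dimensional algebras satisfying the corner relations. Take $w_{ij}=u_{ij}\otimes e_{\pi(i)}$-type constructions: for each permutation $\pi\in S_r$ set $w_{i\pi(i)}=U_i$ with $U_i$ unitaries in $M_k$, and $w_{ij}=0$ otherwise. Both sums are then $1$. Tameness should hold because products of such elements are again partial isometries (scalar multiples of unitaries or zero).

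This gives a surjection $A\to C^*(\mathbb F_r)$ onto the group $C^*$-algebra, $v_{i\pi(i)}\mapsto$ the free generators. Pulling back the uncountably many extremal traces of $C^*(\mathbb F_r)$, including those of finite-dimensional unitary representations, yields an infinite-dimensional trace simplex. In particular there are at least two distinct traces, hence non-uniqueness at every $\beta$.

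I expect the main obstacle to be verifying tameness of these model families and that the quotient really is $C^*(\mathbb F_r)$, i.e.\ that the ideal generated by the $v_{ij}$ with $j\ne\pi(i)$ forces the $v_{i\pi(i)}$ to be unitaries in $qAq$ with no further relations. I would check this directly from the corner relations, which force $v_{i\pi(i)}v_{i\pi(i)}^*=q=v_{i\pi(i)}^*v_{i\pi(i)}$ in the quotient.
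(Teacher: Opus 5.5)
Your proposal is correct. Step~1 and the infinite-dimensionality part of Step~3 follow the paper (Corollary~\ref{cor:corner-diagonal} and Proposition~\ref{prop:free-group-traces}); the genuine difference is in Step~2.

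\textbf{Step 2 in the paper.} The paper identifies $\Tr(A)$ with $\Tr(\mathcal O_{r,r})$ through $\Tr(A)\cong\Tr(M_{r+1}(A))$ and the isomorphism $\Xi$. It then obtains the Gibbs formula by substituting $\psi=(r+1)\tau|_A$ into the reconstruction formula \eqref{eq:reconstruction}, using $\tau(s_i^*xs_i)=\tau(xp_i)$. Because the reconstruction formula automatically produces $\KMS_\beta$ states, no separate KMS check is needed.

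\textbf{Step 2 in your proposal.} You read the identification $\tau\mapsto(r+1)\tau|_A$ directly off Theorem~\ref{thm:A}(d) at $\beta=0$, where $\KMS_0$ means tracial on both sides. You then verify that $\tau(e^{-\beta h}\,\cdot\,)$ is $\KMS_\beta$ for the inner dynamics. Finally, you use $e^{-\beta h}x=x$ for $x\in A$ together with injectivity of $R_\beta$ to conclude that the Gibbs map equals $R_\beta^{-1}\circ R_0$. This is slightly more economical, and it makes the canonicity transparent: the Gibbs correspondence is just the composite of the corner reductions at temperatures $\beta$ and $0$.

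\textbf{Existence and non-uniqueness.} You produce traces from tame families at the corner level via Theorem~\ref{thm:A}(a). This avoids the tameness verification for the representations $\pi_{d,U}$ of Theorem~\ref{thm:findim}. The paper, however, wants those concrete density-matrix Gibbs states anyway, for later use in Example~\ref{ex:O22} and Proposition~\ref{prop:notinjective}.

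\textbf{Small remarks.}
\begin{enumerate}
\item $pq=0$ and $p+q=1$ are defining relations of $\mathcal O_{r,r}$, so there is nothing to check there.
\item The obstacle you flag, identifying the quotient exactly with $C^*(\mathbb F_r)$, is not needed for Theorem~\ref{thm:B}. The surjection $\rho\colon A\to C^*(\mathbb F_r)$ suffices.
\item You should add the one-line reason why the pulled-back characters $\chi_\lambda\circ\rho$ remain extreme in $\Tr(A)$. Their image is the face of traces vanishing on $\ker\rho$, and extreme points of a face are extreme in the whole simplex. Alternatively, distinct characters are linearly independent, so infinitely many of them already force infinite dimension.
\end{enumerate}
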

 
\begin{teoremaletra}[The boundary cases]\label{thm:C}
Let $\sigma$ denote the normalized dynamics \eqref{eq:normalized-action}.
\begin{enumerate}[label=\textup{(\alph*)}]
\item Let $n\ge2$. For $m=1$ the corner relations of Theorem~\ref{thm:A}(a)
degenerate into the Cuntz relations, so that $A\cong\mathcal O_n$ and
$
        \mathcal O_{1,n}\cong M_2(\mathcal O_n)\cong\mathcal O_{n,1},
$
with $\sigma$ becoming $\operatorname{id}_{M_2}\otimes\gamma$, where
$\gamma_t(w_j)=n^{it}w_j$ is the standard gauge dynamics on $\mathcal O_n$.
Consequently each of these $C^*$-algebras admits a unique $\KMS_1$ state and no
$\KMS_\beta$ state for $\beta\neq1$ (Corollary~\ref{cor:corner-boundary}).
\item For $(m,n)=(1,1)$ the corner is generated by the single unitary
$v_{11}=s^{*}t$, hence $A\cong C(\mathbb T)$ and
$\mathcal O_{1,1}\cong M_2(C(\mathbb T))$; the dynamics is trivial, and for
every $\beta\in\mathbb R$ the $\KMS_\beta$ states are precisely the tracial
states (Proposition~\ref{prop:O11}).
\end{enumerate}
\end{teoremaletra}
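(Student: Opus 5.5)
Both parts will be derived from Theorem~\ref{thm:A}: first identify the corner $(A,\sigma^A)$ concretely, then transport its known $\KMS$ structure back to $\mathcal O_{m,n}$ via part~(d), and get the matrix pictures from part~(c).

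\emph{Part (a).} With $m=1$ there is one $s=s_1$ and the corner family is $w_j:=v_{1j}=s^*t_j$. The row relation of Theorem~\ref{thm:A}(a) reads $\sum_j w_jw_j^*=q$. The column relations read $w_j^*w_j=q$ for each $j$. These are exactly the Cuntz relations in the unital algebra $A$ with unit $q$, so $w_1,\dots,w_n$ are isometries with orthogonal ranges summing to $q$. I would also check that the tameness condition is automatic here, since products of Cuntz isometries are again partial isometries.

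By the universal property in Theorem~\ref{thm:A}(a), together with the universal property and simplicity of $\mathcal O_n$, this gives mutually inverse unital $*$-homomorphisms, so $A\cong\mathcal O_n$. By Theorem~\ref{thm:A}(b) the corner dynamics is $\sigma^A_t(w_j)=n^{it}w_j$, which is exactly the gauge dynamics $\gamma$.

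Theorem~\ref{thm:A}(c) with $m=1$ gives $\mathcal O_{1,n}\cong M_2(A)$ with weights $(1,1)$. Since the weights are trivial, $\sigma$ becomes $\id_{M_2}\otimes\gamma$. For $\mathcal O_{n,1}$ I would use the symmetry interchanging the two families, which is Remark~\ref{rem:mn} and the second half of Theorem~\ref{thm:A}(c). Its corner is then generated by $t^*s_i$, again Cuntz isometries. One must check that the normalized dynamics matches up to the relabelling; the corner energy there is $\log n$ (or its negative, which amounts to the adjoint presentation). I expect this sign and normalization bookkeeping to be the main, if minor, obstacle.

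For the $\KMS$ statement I would use the Olesen--Pedersen theorem \cite{OP1978}: $(\mathcal O_n,\gamma)$ has a unique $\KMS_\beta$ state at $\beta=1$ and none otherwise. Theorem~\ref{thm:A}(d) transfers this to $\mathcal O_{1,n}$; note $Z_\beta=1+1^{1-\beta}=2$. The same conclusion can be read off directly from $M_2\otimes$: $\KMS$ states of $\id\otimes\gamma$ are $\operatorname{tr}\otimes\psi$.

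\emph{Part (b).} For $(m,n)=(1,1)$, Theorem~\ref{thm:A}(a) makes $u=v_{11}$ satisfy $uu^*=u^*u=q$, so $u$ is a unitary in $A$. The universal property identifies $A$ with the universal $C^*$-algebra of a unitary, $C(\mathbb T)$; tameness is automatic because powers of a unitary are partial isometries. Here $\varepsilon=\log 1=0$, so $\sigma^A$ is trivial, and by Theorem~\ref{thm:A}(c) $\mathcal O_{1,1}\cong M_2(C(\mathbb T))$ with weights $(1,1)$, hence with trivial dynamics. For a trivial action the $\KMS_\beta$ condition is $\varphi(xy)=\varphi(yx)$ for every $\beta$, so the $\KMS_\beta$ states are exactly the tracial states.
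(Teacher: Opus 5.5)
Your proposal is correct and follows essentially the same route as the paper (Corollary~\ref{cor:corner-boundary} and Proposition~\ref{prop:O11}): you identify the corner from the degenerate relations, observe that the weights $(1,1)$ remove the twist, and transfer Olesen--Pedersen (respectively, the trace identity) through Theorem~\ref{thm:A}(d). The paper gets $A\cong\mathcal O_n$ from simplicity of $\mathcal O_n$ and $A\neq0$ rather than from two universal properties, but either works. The obstacle you anticipate for $\mathcal O_{n,1}$ disappears: the normalized dynamics scales each family by its own cardinality, so interchanging the families is equivariant and $(\mathcal O_{n,1},\sigma)\cong(\mathcal O_{1,n},\sigma)$ directly.
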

 
\begin{teoremaletra}[Ground states]\label{thm:D}
Let $m,n\ge1$ and let $\sigma$ denote the normalized dynamics
\eqref{eq:normalized-action} on $\Omn$
(Corollary~\ref{cor:corner-ground-cases}).
\begin{enumerate}[label=\textup{(\alph*)}]
\item If $m\neq n$, there are no ground states; hence $\sigma$ is not
approximately inner, by \cite[Theorem~2.3]{PowersSakai}.
\item If $m=n=r\ge2$, the ground states are precisely the states vanishing on
$p$, and restriction $\varphi\mapsto\varphi|_{q\mathcal O_{r,r}q}$ is an affine
homeomorphism onto the state space of the corner $q\mathcal O_{r,r}q$.
\item If $m=n=1$, every state is a ground state.
\end{enumerate}
\end{teoremaletra}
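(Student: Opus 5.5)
We deduce Theorem~\ref{thm:D} from Theorem~\ref{thm:A}(e), which reduces ground states of $(\Omn,\sigma)$ to those of $(A,\sigma^A)$ when $(m,n)\neq(1,1)$, and treat $(1,1)$ directly.

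\emph{Case $m\neq n$.} We show that $(A,\sigma^A)$ has no ground state. By Theorem~\ref{thm:A}(b) each $v_{ij}$ is analytic with $\sigma^A_z(v_{ij})=e^{iz\varepsilon}v_{ij}$. Recall that $\psi$ is a ground state if $z\mapsto\psi(x\sigma_z(y))$ is bounded on the upper half-plane for analytic $x,y$. Take $x=v_{ij}^*$ and $y=v_{ij}$. Then $\psi(v_{ij}^*\sigma_z(v_{ij}))=e^{iz\varepsilon}\psi(v_{ij}^*v_{ij})$, and with $z=is$, $s>0$, this equals $e^{-s\varepsilon}\psi(v_{ij}^*v_{ij})$.

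If $\varepsilon<0$ (i.e.\ $n<m$), boundedness forces $\psi(v_{ij}^*v_{ij})=0$ for all $i,j$. Summing over $i$ and using the column relation $\sum_i v_{ij}^*v_{ij}=q$ gives $\psi(q)=0$, contradicting $\psi(q)=1$ since $q$ is the unit of $A$.

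If $\varepsilon>0$, use instead $x=v_{ij}$ and $y=v_{ij}^*$, for which $\sigma_z(v_{ij}^*)=e^{-iz\varepsilon}v_{ij}^*$. The same argument with the row relation $\sum_j v_{ij}v_{ij}^*=q$ again gives $\psi(q)=0$.

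So $A$ has no ground states, and by Theorem~\ref{thm:A}(e) neither does $\Omn$. Since $\Omn$ is unital, \cite[Theorem~2.3]{PowersSakai} says an approximately inner dynamics admits ground states, so $\sigma$ is not approximately inner.

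\emph{Case $m=n=r\ge2$.} Here $\varepsilon=0$, so $\sigma^A$ is trivial. For trivial dynamics the function $\psi(x\sigma_z(y))$ is constant, so every state of $A$ is a ground state. Theorem~\ref{thm:A}(e) then shows that restriction is an affine homeomorphism from the ground states of $\Omn$ onto the state space of $A$, with inverse $\psi\mapsto\psi(q\cdot q)$.

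It remains to show these are exactly the states vanishing on $p$. Each extension $\psi(q\cdot q)$ vanishes on $p$, since $qp=0$. Conversely, suppose $\varphi$ vanishes on $p$. Because $p+q=1$ (indeed $1=p+q$ in the unital picture, via the matrix decomposition of Theorem~\ref{thm:A}(c) with $u_0=q$), Cauchy--Schwarz gives $\varphi(x)=\varphi(qxq)$. Hence $\varphi$ is the extension of $\varphi|_A$, and is therefore a ground state.

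\emph{Case $m=n=1$.} By Theorem~\ref{thm:C}(b) the dynamics is trivial, so every state is a ground state.

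The main point to check is the unit issue: whether $1=p+q$ in $\Omn$ (otherwise replace $p$ by $1-q$ in the statement). Beyond that, the decisive step is choosing $x,y$ according to the sign of $\varepsilon$ so that the relevant corner relation forces $\psi(q)=0$.
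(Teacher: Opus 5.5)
Your proposal is correct and follows the paper's proof (Corollary~\ref{cor:corner-ground-cases}) essentially step for step. You reduce to the corner via Theorem~\ref{thm:A}(e), kill $\psi(v_{ij}^*v_{ij})$ or $\psi(v_{ij}v_{ij}^*)$ according to the sign of $\varepsilon$ and sum with the matching corner relation to reach $\psi(q)=0$, use triviality of $\sigma^A$ when $m=n$, and use triviality of the dynamics for $(1,1)$. Your worry about the unit is unnecessary, since $p+q=1$ is one of the defining relations \eqref{rel:pq}; your explicit Cauchy--Schwarz argument for the converse in (b) just spells out what the paper leaves to the proof of Proposition~\ref{prop:corner-ground}.
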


\begin{teoremaletra}[The polytope of $\KMS_1$ states]\label{thm:E}
Let $m,n\ge2$ and let $\sigma$ denote the normalized dynamics
\eqref{eq:normalized-action} on $\Omn$.
\begin{enumerate}[label=\textup{(\alph*)}]
\item If $m\neq n$, finite-temperature $\KMS_\beta$ states exist only at
$\beta=1$ (Corollary~\ref{cor:beta-one}); in all cases $\Omn$ admits a
$\KMS_1$ state (Theorem~\ref{thm:existence-asymmetric}).
\item The first-level intersection data define an affine, weak-$*$ continuous
surjection $\Phi$ from the $\KMS_1$ simplex onto the transportation polytope
$\mathcal P$, whose dimension is $(m-1)(n-1)$ and whose row and column
constraints are given by the two corner relations of Theorem~\ref{thm:A}(a)
(Proposition~\ref{prop:Phi}); in particular the $\KMS_1$ states are not unique
(Corollary~\ref{cor:nonuniqueness-asymmetric}).
\item The map $\Phi$ is not injective  (Proposition~\ref{prop:notinjective}).
\item Every vertex $V$ of $\mathcal P$ is the image of an extremal, hence
factorial, $\KMS_1$ state, and distinct vertices give pairwise disjoint states
(Theorem~\ref{thm:vertices}). Moreover, writing $Z$ for the set of indices at
which $V$ vanishes and $J_Z$ for the closed ideal of $A$ generated by
$\{v_{ij}:(i,j)\in Z\}$, the fibre $\Phi^{-1}(V)$ is affinely homeomorphic to
the $\KMS_1$ simplex of $A/J_Z$ (Theorem~\ref{thm:fibres-vertex}).
\item If $m=n=r$, the vertices of $\mathcal P$ are precisely the matrices
$r^{-1}P_\varpi$ with $P_\varpi$ an $r\times r$ permutation matrix. For each of
them $A/J_Z\cong C^*(\mathbb F_r)$, so the corresponding fibre is affinely
homeomorphic to $\operatorname{Tr}\left(C^*(\mathbb F_r)\right)$
(Proposition~\ref{prop:diagonal-fibres}).
\end{enumerate}
\end{teoremaletra}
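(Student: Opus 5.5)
The plan is to prove each item of Theorem~\ref{thm:E} inside the corner, using Theorem~\ref{thm:A}(d), so that everything becomes a statement about $\KMS_\beta$ states $\psi$ of $(A,\sigma^A)$.

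\emph{Part (a), restriction to $\beta=1$.} Take a $\KMS_\beta$ state $\psi$ of $A$. Since each $v_{ij}$ is an eigenvector of $\sigma^A$ with eigenvalue $e^{it\varepsilon}$, the KMS condition gives $\psi(v_{ij}v_{ij}^*)=e^{-\beta\varepsilon}\psi(v_{ij}^*v_{ij})$. Sum the first corner relation over all $i,j$: the left side gives $\sum_{i,j}\psi(v_{ij}v_{ij}^*)=m$. The right side gives $e^{-\beta\varepsilon}\sum_{i,j}\psi(v_{ij}^*v_{ij})=e^{-\beta\varepsilon}n$, by the second corner relation. Hence $(m/n)^{\beta}=m/n$, which forces $\beta=1$ when $m\ne n$.

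\emph{Part (a), existence: conformal branching models.} Fix $C\in\mathcal P$ and split $X=[0,1]$ with Lebesgue measure in two ways:
\begin{itemize}
\item into sets $X_i$ of measure $1/m$, with affine bijections $\sigma_i\colon X\to X_i$;
\item into sets $Y_j$ of measure $1/n$, with affine bijections $\tau_j\colon X\to Y_j$;
\end{itemize}
arranged so that $|X_i\cap Y_j|=c_{ij}$. On $L^2(X)\oplus L^2(X)$ let $q$ be the first summand and $p$ the second. Define $s_i$ as the weighted composition operator carrying the $q$-copy unitarily onto $L^2(X_i)$ in the $p$-copy, and define $t_j$ the same way using $\tau_j$.
\begin{itemize}
\item Every word in these operators is a partial isometry whose initial and final projections are multiplication by indicator functions. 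These commute, which gives tameness, so the universal property yields a representation of $\Omn$.
\item Each $v_{ij}$ acts as composition with $\sigma_i^{-1}\tau_j$ on $\tau_j^{-1}(X_i\cap Y_j)$, with constant Radon--Nikodym factor $n/m=e^{\varepsilon}$.
\item Let $\psi$ be the diagonal state: compose the conditional expectation onto multiplication operators with integration over the $q$-copy.
\item Check $\psi(xy)=\psi(y\sigma^A_{i}(x))$ on monomials in the $v_{ij}$ and their adjoints. A monomial has nonzero diagonal part only where its underlying partial map has fixed points. Conformality with the constant factor $e^{\varepsilon}$ then gives the KMS$_1$ identity.
\end{itemize}
I expect this verification, together with tameness, to be the main obstacle. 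I would handle it by working on the dense $*$-algebra of finite sums of weighted composition operators, and by using Theorem~\ref{thm:A}(d) to lift $\psi$ to $\Omn$.

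\emph{Part (b).} Set $f_{ij}=s_is_i^*t_jt_j^*$, which is a projection by tameness, and define $\Phi(\varphi)_{ij}=Z_1\,\varphi(f_{ij})$ with $Z_1=2$. The row and column sums follow from $\sum_j f_{ij}=s_is_i^*$ and $\sum_i f_{ij}=t_jt_j^*$. The KMS condition gives $\varphi(s_is_i^*)=m^{-1}\varphi(q)$ and $\varphi(t_jt_j^*)=n^{-1}\varphi(q)$, and these yield the margins $1/m$ and $1/n$. The map $\Phi$ is visibly affine and weak-$*$ continuous. The model built from $C$ has $\Phi=C$, so $\Phi$ is surjective, and non-uniqueness follows because $\dim\mathcal P=(m-1)(n-1)>0$.

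\emph{Part (c).} An injective affine continuous map from a compact convex set is an affine homeomorphism onto its image. The $\KMS_1$ states form a Choquet simplex, but for $\max(m,n)\ge3$ the polytope $\mathcal P$ has $mn$ facets, more than a simplex of its dimension can have, so $\Phi$ cannot be injective. For $(m,n)=(2,2)$ I would instead exhibit two of the earlier finite-dimensional trace states with equal $f_{ij}$-values.

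\emph{Part (d), extremal states over vertices.} For a vertex $V$, the set $\Phi^{-1}(V)$ is a nonempty closed face, being the preimage of an extreme point under an affine surjection. Its extreme points are therefore extreme $\KMS_1$ states, hence factorial. Distinct extremal KMS states are disjoint.

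\emph{Part (d), the fibre.} Let $\psi\in\Phi^{-1}(V)$ and $(i,j)\in Z$. Then $\psi(v_{ij}^*v_{ij})=0$, and by KMS also $\psi(v_{ij}v_{ij}^*)=0$. The Cauchy--Schwarz inequality, combined with the KMS condition to move factors across, gives $\psi(av_{ij}b)=0$, so $\psi$ factors through $A/J_Z$. Conversely, a $\KMS_1$ state of $A/J_Z$ vanishes on $Z$. Since $V$ is a vertex, its support forms a forest in the bipartite graph on rows and columns. The row and column sums then determine all remaining entries, so the state lies over $V$.

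\emph{Part (e).} By Birkhoff's theorem the vertices of $\mathcal P$ are the matrices $r^{-1}P_\varpi$. In $A/J_Z$ the corner relations collapse to $v_{i\varpi(i)}v_{i\varpi(i)}^*=q=v_{i\varpi(i)}^*v_{i\varpi(i)}$, so the algebra is universal for $r$ unitaries, with tameness automatic, and equals $C^*(\mathbb F_r)$. Since $\varepsilon=0$, its $\KMS_1$ states are exactly its traces.
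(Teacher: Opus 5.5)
Your outline is correct. For (c), (d) and (e) it follows the paper's argument:
\begin{itemize}
\item the fibre $\Phi^{-1}(V)$ is a closed face, and its extreme points are extremal $\KMS_1$ states;
\item $J_Z$ is killed by Cauchy--Schwarz combined with the KMS identity;
\item the forest property of vertices pins down $V$ (you use uniqueness of a forest-supported solution, where the paper notes that every point of $\mathcal P_Z$ is a vertex);
\item Birkhoff's theorem and the two universal properties give $C^*(\mathbb F_r)$.
\end{itemize}
Your proof that $\beta=1$ sums $\psi(v_{ij}v_{ij}^*)=e^{-\beta\varepsilon}\psi(v_{ij}^*v_{ij})$ over the $m\times n$ unitary matrix. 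Your margins in (b) come from $\sum_jf_{ij}=s_is_i^*$ and $\sum_if_{ij}=t_jt_j^*$. Both are valid alternatives to the paper's partition identity and Lemma~\ref{lem:dictionary}.

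The genuine difference is the existence part of (a).

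\emph{The paper's route.} It works in the reduced partial crossed product $B_X$ of a measurable partial action of $\mathbb F_{m+n}$. There the canonical faithful conditional expectation $E_X$ kills every fibre $D_g\delta_g$ with $g\neq e$ purely by its label. So the KMS check only involves pairs $k=g^{-1}$ and one change of variables. The cost is the Fell-bundle machinery and verifying that the $\alpha_g$ form a partial action.

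\emph{Your route.} You act on $L^2(X)\oplus L^2(X)$ by weighted composition operators $W_\alpha$. Tameness is then immediate, but no group label survives. The diagonal part of a word must therefore be its restriction to the fixed-point set of its underlying partial map. Your state need not coincide with the paper's $\omega_\mu\circ\pi_X$; this is harmless, since $\Phi$ only sees the diagonal elements $f_{ij}$.

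\emph{The step you left open.} You need the diagonal functional to be well defined and positive. This is the crux of your route, and it has a short fix.
\begin{itemize}
\item $L^\infty(X)$ is injective, so there is a conditional expectation $E\colon B(L^2(X))\to L^\infty(X)$.
\item From $M_fW_\alpha=W_\alpha M_{f\circ\alpha}$, bimodularity gives $E(W_\alpha)(f-f\circ\alpha)=0$ for all $f$.
\item Also $W_\alpha M_{1_{\mathrm{Fix}(\alpha)}}=M_{1_{\mathrm{Fix}(\alpha)}}$, because the Radon--Nikodym factor is $1$ on $\mathrm{Fix}(\alpha)$.
\item Hence $E(W_\alpha)=1_{\mathrm{Fix}(\alpha)}$ for every nonsingular partial bijection $\alpha$, and $\psi=\int E(\cdot)\,d\nu$ is automatically a state.
\end{itemize}

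\emph{The KMS check.} Your claim that conformality gives the identity then goes through. The map $\alpha_y$ carries $\mathrm{Fix}(\alpha_x\alpha_y)$ onto $\mathrm{Fix}(\alpha_y\alpha_x)$ with the correct scaling factor. When $m\ne n$, conformality forces the fixed-point set of any word of nonzero total energy to be null.

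\emph{Two small points.} For general $C$ the $X_i$ and $Y_j$ cannot all be intervals. So $\sigma_i$ and $\tau_j$ should be measure-scaling (e.g.\ piecewise affine) bijections rather than affine ones. Also, in (c) you assert the facet count; the paper verifies it by exhibiting a point of $\mathcal P$ with exactly one vanishing entry.
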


\begin{table}[H]
\centering
\caption{KMS and ground states for the normalized dynamics in
\eqref{eq:normalized-action}.}
\label{table-results}
\small
\begin{tabular}{@{}llll@{}}
\toprule
Values of $m,n$ & Inverse temperatures & Behaviour & Ground states \\
\midrule
$m,n\ge2$, $m\ne n$ & $\beta=1$ only & existence; non-unique & none \\
$m=n=r\ge2$ & every $\beta\in\R$ & simplex $\cong\operatorname{Tr}(\mathcal O_{r,r})$ & states on $q\mathcal O_{r,r}q$ \\
$(1,n)$ or $(n,1)$, $n\ge2$ & $\beta=1$ only & $M_2(\mathcal O_n)$; unique & none \\
$(1,1)$ & every $\beta\in\R$ & $M_2(C(\mathbb T))$; tracial & all states \\
\bottomrule
\end{tabular}
\end{table}

\section{Preliminaries and first obstruction}
\label{sec:corner}

Before turning to the classification of $\KMS$ states for the normalized
dynamics on $\Omn$, we briefly recall the standard definitions
and results we shall use.  We refer the reader to
\cite[Section~5.3]{BR2} for a comprehensive treatment of the KMS theory.

Let $(B,\sigma)$ be a $C^*$-dynamical system, consisting of a
$C^*$-algebra $B$ together with a strongly continuous one-parameter group
of automorphisms $\sigma:\mathbb{R}\to\operatorname{Aut}(B)$. An element
$a\in B$ is said to be \emph{analytic} for $\sigma$ if the map
$t\mapsto\sigma_t(a)$ extends to an entire function $z\mapsto\sigma_z(a)$
taking values in $B$.

Given $\beta\in\mathbb{R}$, a state $\varphi$ on $B$ is called a
\emph{$\KMS_\beta$ state (with respect to $\sigma$)} if
$
    \varphi(ab)=\varphi\left(b\sigma_{i\beta}(a)\right)
$
for all analytic elements $a,b$ on a dense, $\sigma$-invariant $*$-subalgebra of entire
analytic elements. It is a general fact that, if $B$ has an identity, the
set of $\KMS_\beta$ states is a weak-$*$ compact simplex; we denote it by
$K_\beta(B,\sigma)$. We write also $\operatorname{Tr}(B)$ for the trace
simplex of $B$ in the weak-$*$ topology. At $\beta=0$ we use the convention given by the $\KMS$ identity, so that the $\KMS_0$ condition is precisely the trace
identity.

In what follows, let $m,n\ge1$. The tame algebra $\Omn$ is
generated by partial isometries $s_1,\ldots,s_m$ and $t_1,\ldots,t_n$,
subject to the following relations
\begin{align}
        s_i^*s_k&=0 &&(i\ne k),\label{rel:sorth}\\
        t_j^*t_\ell&=0 &&(j\ne \ell),\label{rel:torth}\\
        s_i^*s_i&=t_j^*t_j=:q,\label{rel:common-domain}\\
        \sum_{i=1}^m s_is_i^*&=\sum_{j=1}^n t_jt_j^*=:p,\label{rel:common-range}\\
        pq&=0,\qquad p+q=1.\label{rel:pq}
\end{align}
and required in addition to form a tame family in the sense of Definition~\ref{def:tame}.

\begin{remark}\label{rem:mn}
We generate $\Omn$ by $m$ partial isometries $s_i$ and $n$ partial
isometries $t_j$, whereas \cite{AEK2013} uses the reverse labelling and
assumes $m\le n$ throughout. Since \eqref{rel:sorth}--\eqref{rel:pq} are
invariant under interchanging the two families together with $m$ and $n$, the
resulting algebras coincide. Thus results quoted from \cite{AEK2013} are to be
read with $s$ and $t$ exchanged. Throughout, we also assume that the indices $i,j$ range over $1\le i\le m$ and $1\le j\le n.$
\end{remark}

\begin{definition}\label{def:tame}
Let $B$ be a unital $C^*$-algebra and let
$S_1,\ldots,S_m,T_1,\ldots,T_n\in B$ be partial isometries. The family
$\{S_i,T_j\}$ is called \emph{tame} if every element of the multiplicative
semigroup generated by
$S_1,\ldots,S_m,T_1,\ldots,T_n,S_1^*,\ldots,S_m^*,T_1^*,\ldots,T_n^*$ is
again a partial isometry.
\end{definition}

The tame algebra $\Omn$ is characterized by the following
universal property in \cite{AEK2013}.

\begin{proposition}\label{prop:universal}
Let $B$ be a unital $C^*$-algebra containing projections $P,Q$ and a tame
family of partial isometries $S_1,\ldots,S_m,T_1,\ldots,T_n$ satisfying
relations \eqref{rel:sorth}--\eqref{rel:pq}, with $P,Q,S_i,T_j$ in place
of $p,q,s_i,t_j$. Then there exists a unique unital $*$-homomorphism
$\pi:\Omn\to B$ with $\pi(s_i)=S_i$ and $\pi(t_j)=T_j$ for all
$i,j$. Conversely, the canonical generators of $\Omn$ satisfy
\eqref{rel:sorth}--\eqref{rel:pq} and form a tame family. Thus
$\Omn$ is the universal unital $C^*$-algebra for tame families
satisfying \eqref{rel:sorth}--\eqref{rel:pq}.
\end{proposition}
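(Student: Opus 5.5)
This is the universal property of \cite{AEK2013}, restated in our labelling, and I would prove it by assembling $\Omn$ as a universal $C^*$-algebra and checking the tame-family conditions directly. Following Remark~\ref{rem:mn}, I first translate the definition in \cite{AEK2013} by exchanging $s$ and $t$; since the relations \eqref{rel:sorth}--\eqref{rel:pq} are symmetric under this interchange, the translated object is again generated by $s_1,\dots,s_m,t_1,\dots,t_n$. I then realise $\Omn$ as the universal unital $C^*$-algebra on these generators, subject to \eqref{rel:sorth}--\eqref{rel:pq} together with the tameness relations. Concretely, tameness is imposed by requiring, for every word $w$ in the letters $s_i,t_j,s_i^*,t_j^*$, that $ww^*w=w$.

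The first step is to check that this universal algebra exists. Every generator is a partial isometry, so each has norm at most $1$ in any representation. Hence the supremum of norms over all representations satisfying the relations is finite, and the usual enveloping construction gives a $C^*$-algebra. The relations are satisfied by the zero representation, but nontriviality is not needed for the statement; in any case \cite{AEK2013} supplies a faithful model on the universal $(m,n)$-dynamical system.

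The second step is the universal property. Suppose $B$ contains $P,Q,S_i,T_j$ as in the statement. Tameness of $\{S_i,T_j\}$ says exactly that every word $W$ in these elements and their adjoints satisfies $WW^*W=W$. Therefore the assignment $s_i\mapsto S_i$, $t_j\mapsto T_j$ respects every defining relation, and the universal property of the enveloping algebra yields $\pi$. The projections $p,q$ are given by the expressions in \eqref{rel:common-domain} and \eqref{rel:common-range}, so $\pi(p)=P$ and $\pi(q)=Q$, and $\pi$ is unital because $p+q=1$ and $P+Q=1$. Uniqueness holds because the $s_i,t_j$ generate $\Omn$ as a $C^*$-algebra.

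The converse is that the canonical generators satisfy the relations and form a tame family. The relations hold by construction, since the imposed condition $ww^*w=w$ for every word $w$ is precisely the statement that every element of the multiplicative semigroup is a partial isometry, which is Definition~\ref{def:tame}. The only genuine issue is therefore whether imposing infinitely many word relations is compatible with the setting of \cite{AEK2013}, where $\Omn$ is defined through the partial crossed product $C(\Omega^u)\rtimes\mathbb F_{m+n}$. I expect the identification of these two definitions to be the main obstacle, and I would settle it by citing their theorem that the crossed product is universal for tame families.
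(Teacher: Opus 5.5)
Your proposal is correct and is essentially what the paper does. The paper gives no separate proof. It defines $\Omn$ by \eqref{rel:sorth}--\eqref{rel:pq} together with tameness and quotes the universal property from \cite{AEK2013}; this is exactly your enveloping algebra with the word relations $ww^*w=w$. Consequently the identification with $C(\Omega^u)\rtimes\mathbb F_{m+n}$ that you flag as the main obstacle is not needed for this statement, since the crossed-product picture is a realization of the algebra rather than its definition.
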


\begin{remark}\label{rem:inverse-semigroup}
Note that tameness is equivalent to requiring that the generators and
their adjoints generate an inverse semigroup of partial isometries; in
particular, the idempotents of this semigroup commute. We shall use this
concretely: since the range projections $s_is_i^*$ and $t_jt_j^*$ commute,
$s_is_i^*t_jt_j^*=t_jt_j^*s_is_i^*$ is a projection. When positivity
should be visible without first invoking commutativity, we write the same
projection as $t_jt_j^*s_is_i^*t_jt_j^*$.
\end{remark}

Consider the more general action given on the generators of
$\Omn$ by
\begin{equation}\label{eq:general-action}
        \sigma_t(s_i)=e^{it\lambda_i}s_i,
        \qquad
        \sigma_t(t_j)=e^{it\kappa_j}t_j ,
\end{equation}
where $\lambda_i,\kappa_j\in\mathbb R$ are fixed weights. It is
straightforward to check that $\sigma$ extends to a well-defined strongly
continuous action. Indeed, for each real $t$, using that
$|e^{it\lambda_i}|=|e^{it\kappa_j}|=1$, one checks that the elements
$e^{it\lambda_i}s_i$ and $e^{it\kappa_j}t_j$ are partial isometries
satisfying \eqref{rel:sorth}--\eqref{rel:pq} and that they form a tame
set. Hence, by Proposition~\ref{prop:universal}, there exists a unique
unital $*$-homomorphism $\sigma_t$ with $\sigma_t(s_i)=e^{it\lambda_i}s_i$
and $\sigma_t(t_j)=e^{it\kappa_j}t_j$. Uniqueness implies
that $\sigma_s\circ\sigma_t=\sigma_{s+t}$ and $\sigma_0=\id$; in particular, each $\sigma_t$ is invertible with inverse $\sigma_{-t}$, hence a $*$-automorphism. Strong
continuity of $t\mapsto\sigma_t$ follows from continuity on the dense
$*$-subalgebra generated by the canonical generators, and extends to all
of $\Omn$ since each $\sigma_t$ is isometric.

\begin{lemma}\label{lem:basic-kms}
Let $v$ be analytic for $\sigma$, and suppose that $\sigma_t(v)=e^{itE}v$
for all real $t$. If $\varphi$ is a $\KMS_\beta$ state, then
$\varphi(vv^*)=e^{-\beta E}\varphi(v^*v)$.
\end{lemma}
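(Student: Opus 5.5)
The plan is to apply the $\KMS_\beta$ condition directly with $a=v$ and $b=v^*$. The first step is to compute the analytic extension of $t\mapsto\sigma_t(v)$. Since $\sigma_t(v)=e^{itE}v$ for real $t$, the entire function $z\mapsto e^{izE}v$ agrees with $\sigma_t(v)$ on $\mathbb R$. By uniqueness of analytic continuation it is the extension $\sigma_z(v)$. In particular
\[
\sigma_{i\beta}(v)=e^{i(i\beta)E}v=e^{-\beta E}v .
\]
The same reasoning gives $\sigma_t(v^*)=e^{-itE}v^*$, so $v^*$ is also entire analytic.

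Next, feed the pair $a=v$, $b=v^*$ into the $\KMS_\beta$ identity $\varphi(ab)=\varphi(b\,\sigma_{i\beta}(a))$. This yields
\[
\varphi(vv^*)=\varphi\bigl(v^*\sigma_{i\beta}(v)\bigr)=e^{-\beta E}\varphi(v^*v),
\]
which is exactly the claim.

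The only point needing care is that the definition recalled above states the $\KMS$ identity for $a,b$ in some dense $\sigma$-invariant $*$-subalgebra of entire analytic elements, while $v$ might not lie in the chosen one. I would handle this with the standard equivalence in \cite[Section~5.3]{BR2}: a state satisfying the identity on such a subalgebra satisfies it for all entire analytic $a$ and all $b\in B$. This is proved by approximating $a$ through the smoothing operators $a\mapsto\sqrt{n/\pi}\int e^{-nt^2}\sigma_t(a)\,dt$ and using continuity of $\varphi$.

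For the elements used later in the paper, namely the generators $s_i,t_j,v_{ij}$, which are eigenvectors of the action, one can alternatively just take the $*$-algebra generated by the canonical generators. It is dense, $\sigma$-invariant, and consists of finite sums of eigenvectors, hence of entire analytic elements, so in that case no approximation is needed. I expect this bookkeeping to be the only step of any substance, and the rest is a one-line computation. At $\beta=0$ the statement reduces to the trace identity $\varphi(vv^*)=\varphi(v^*v)$, consistent with the convention adopted above.
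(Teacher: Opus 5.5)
Your proof is correct and is the same as the paper's: identify $\sigma_z(v)=e^{izE}v$ by uniqueness of analytic continuation, then apply the $\KMS_\beta$ identity to the pair $v,v^*$ to get $\varphi(vv^*)=e^{-\beta E}\varphi(v^*v)$. You also justify passing from the chosen dense analytic subalgebra to an arbitrary entire analytic $v$, via the standard Bratteli--Robinson equivalence or by using the algebra generated by the eigenvector generators; the paper leaves this implicit, and your argument for it is sound.
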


\begin{proof}
The hypothesis implies that the entire analytic continuation of the orbit
of $v$ is $\sigma_z(v)=e^{izE}v$, for $z\in\C$, because the right-hand
side is an entire function and agrees with the given real orbit. Applying
the KMS identity to the pair $v,v^*$ gives
$
\varphi(vv^*)=\varphi\left(v^*\sigma_{i\beta}(v)\right)
             =e^{-\beta E}\varphi(v^*v).
$
\end{proof}

The previous lemma is a standard computation. Applying it to the
generators of $\Omn$ and using the defining relations yields an
identity that restricts the values of the inverse temperature parameter
for which a KMS state can exist.

\begin{theorem}\label{thm:partition-equation}
If $\varphi$ is a $\KMS_\beta$ state for \eqref{eq:general-action}, then
\begin{equation}\label{eq:partition}
        \sum_{i=1}^m e^{-\beta\lambda_i}=\sum_{j=1}^n e^{-\beta\kappa_j}.
\end{equation}
\end{theorem}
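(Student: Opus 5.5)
The plan is to apply Lemma~\ref{lem:basic-kms} to each generator and sum the resulting identities using \eqref{rel:common-domain} and \eqref{rel:common-range}. Each $s_i$ satisfies $\sigma_t(s_i)=e^{it\lambda_i}s_i$ for all real $t$. Hence $s_i$ is entire analytic with $\sigma_z(s_i)=e^{iz\lambda_i}s_i$, and likewise $\sigma_z(t_j)=e^{iz\kappa_j}t_j$. Lemma~\ref{lem:basic-kms} then gives, for a $\KMS_\beta$ state $\varphi$,
\[
\varphi(s_is_i^*)=e^{-\beta\lambda_i}\varphi(s_i^*s_i)=e^{-\beta\lambda_i}\varphi(q),
\qquad
\varphi(t_jt_j^*)=e^{-\beta\kappa_j}\varphi(q).
\]
Here we used \eqref{rel:common-domain} to identify every initial projection with $q$.

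Next, we sum over $i$ and over $j$ and invoke \eqref{rel:common-range}. This gives
\[
\varphi(p)=\Big(\sum_{i=1}^m e^{-\beta\lambda_i}\Big)\varphi(q)
=\Big(\sum_{j=1}^n e^{-\beta\kappa_j}\Big)\varphi(q).
\]
So it suffices to show $\varphi(q)\neq0$, after which we divide to obtain \eqref{eq:partition}.

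The only nontrivial step is proving $\varphi(q)\neq 0$, which we argue by contradiction. Suppose $\varphi(q)=0$. Then the display gives $\varphi(p)=0$, and $p+q=1$ by \eqref{rel:pq}, so $\varphi(1)=0$. This contradicts $\varphi$ being a state. Therefore $\varphi(q)>0$, and cancelling it yields the partition identity.

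A minor point to check is that the KMS identity in the definition is only required on a dense invariant $*$-subalgebra of entire analytic elements. It must apply to the pair $(s_i^*,s_i)$ used in Lemma~\ref{lem:basic-kms}. We handle this by taking the $*$-algebra generated by the $s_i,t_j$, which is dense and $\sigma$-invariant. It consists of entire analytic elements, since each monomial is an eigenvector of $\sigma$. Alternatively, we appeal to the standard fact in \cite[Section~5.3]{BR2} that the KMS identity extends to all entire analytic elements.
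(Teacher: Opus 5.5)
Your proposal is correct and follows the paper's proof essentially verbatim: apply Lemma~\ref{lem:basic-kms} to each $s_i$ and $t_j$, sum using \eqref{rel:common-domain} and \eqref{rel:common-range}, and note that $\varphi(q)=0$ would force $\varphi(1)=\varphi(p)+\varphi(q)=0$. Your closing remark on where the KMS identity may be applied is a point the paper leaves implicit, and of your two justifications the one that settles it is the standard fact that the identity on one dense invariant analytic core extends to all entire analytic elements, since the definition does not let you choose the core.
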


\begin{proof}
For each $i$, Lemma~\ref{lem:basic-kms} applied to $v=s_i$ gives
$\varphi(s_is_i^*)=e^{-\beta\lambda_i}\varphi(s_i^*s_i)
=e^{-\beta\lambda_i}\varphi(q)$. Similarly
$\varphi(t_jt_j^*)=e^{-\beta\kappa_j}\varphi(q)$. Summing and using the
common range projection relation \eqref{rel:common-range}, we obtain
$$
\left(\sum_i e^{-\beta\lambda_i}\right)\varphi(q)
=\varphi(p)
=\left(\sum_j e^{-\beta\kappa_j}\right)\varphi(q).
$$
It remains only to note that $\varphi(q)\ne0$. Indeed, if $\varphi(q)=0$,
then $\varphi(s_is_i^*)=0$ for all $i$, hence $\varphi(p)=0$. This
contradicts $p+q=1$ and $\varphi(1)=1$. Thus $\varphi(q)>0$, and division
by $\varphi(q)$ yields \eqref{eq:partition}.
\end{proof}

\begin{corollary}\label{cor:beta-one}
For the normalized action \eqref{eq:normalized-action}, if $m\ne n$, the
only possible finite real inverse temperature is $\beta=1$.
\end{corollary}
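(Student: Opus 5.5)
We specialize Theorem~\ref{thm:partition-equation} to the normalized action \eqref{eq:normalized-action}, in which $\lambda_i=\log m$ for every $i$ and $\kappa_j=\log n$ for every $j$. If $\varphi$ is a $\KMS_\beta$ state with $\beta\in\mathbb R$, the partition identity \eqref{eq:partition} becomes
\[
m\cdot m^{-\beta}=n\cdot n^{-\beta},\qquad\text{that is,}\qquad m^{1-\beta}=n^{1-\beta}.
\]
Before applying this, one should note that the generators $s_i$ and $t_j$ are analytic with $\sigma_t(s_i)=e^{it\log m}s_i$ and $\sigma_t(t_j)=e^{it\log n}t_j$, so Lemma~\ref{lem:basic-kms} and Theorem~\ref{thm:partition-equation} apply directly.

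Next we take logarithms, which is allowed because $m,n\ge1$ make both sides positive. This gives $(1-\beta)\log m=(1-\beta)\log n$, so $(1-\beta)(\log m-\log n)=0$. When $m\ne n$, strict monotonicity of $\log$ gives $\log m\ne\log n$, and therefore $\beta=1$. Equivalently, the function $x\mapsto x^{1-\beta}$ is injective on $(0,\infty)$ whenever $\beta\ne1$, so $m^{1-\beta}=n^{1-\beta}$ forces $m=n$ unless $\beta=1$.

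None of these steps presents a real obstacle. The one point to check is that Theorem~\ref{thm:partition-equation} already contains the nondegeneracy $\varphi(q)>0$, so nothing further is needed. The statement asserts only necessity; existence at $\beta=1$ is a separate matter, addressed later.
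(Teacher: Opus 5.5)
Your proposal is correct and follows the paper's proof: you specialize the partition identity of Theorem~\ref{thm:partition-equation} to $\lambda_i=\log m$, $\kappa_j=\log n$, reduce it to $m^{1-\beta}=n^{1-\beta}$, and conclude $\beta=1$ when $m\ne n$. The paper writes the same equation as $(m/n)^{1-\beta}=1$ instead of taking logarithms, which is only a cosmetic difference.
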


\begin{proof}
In \eqref{eq:general-action} we have $\lambda_i=\log m$ and
$\kappa_j=\log n$, so that \eqref{eq:partition} becomes
$
me^{-\beta\log m}=ne^{-\beta\log n}$, that is $ (m/n)^{1-\beta}=1 .
$
If $m\ne n$, this is possible only when $1-\beta=0$.
\end{proof}

Thus, in the asymmetric case $m\ne n$, KMS states can occur only at
$\beta=1$. By contrast, in the diagonal case $m=n$, every real value of
$\beta$ satisfies $me^{-\beta\log m}=ne^{-\beta\log n}$; as we shall see
below, $\KMS_\beta$ states indeed exist for every such parameter $\beta$.

The following consequence will fix the choice of measures in the models of
the next section.

\begin{corollary}\label{cor:normalization}
For every $\KMS_1$ state of the normalized dynamics,
$$
\varphi(p)=\varphi(q)=\tfrac12,
\qquad
\varphi(s_is_i^*)=\tfrac1{2m},
\qquad
\varphi(t_jt_j^*)=\tfrac1{2n} .
$$
\end{corollary}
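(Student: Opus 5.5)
The plan is to repeat the computation in the proof of Theorem~\ref{thm:partition-equation}, now with the normalized weights $\lambda_i=\log m$ and $\kappa_j=\log n$ and at $\beta=1$. Applying Lemma~\ref{lem:basic-kms} to $v=s_i$ gives
\[
\varphi(s_is_i^*)=e^{-\log m}\,\varphi(s_i^*s_i)=\tfrac1m\,\varphi(q).
\]
Here $s_i$ is entire analytic, since its orbit $m^{it}s_i$ extends to the entire function $z\mapsto m^{iz}s_i$, and $s_i^*s_i=q$ by \eqref{rel:common-domain}. In the same way, applying the lemma to $v=t_j$ gives $\varphi(t_jt_j^*)=\tfrac1n\varphi(q)$.

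Next I sum over $i$ and use \eqref{rel:common-range}:
\[
\varphi(p)=\sum_{i=1}^m\varphi(s_is_i^*)=m\cdot\tfrac1m\,\varphi(q)=\varphi(q).
\]
Summing over $j$ gives the same identity, which is consistent. Since $p+q=1$ by \eqref{rel:pq} and $\varphi$ is a state, $\varphi(p)+\varphi(q)=1$. Combined with $\varphi(p)=\varphi(q)$, this gives $\varphi(p)=\varphi(q)=\tfrac12$.

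Substituting back yields $\varphi(s_is_i^*)=\tfrac1{2m}$ and $\varphi(t_jt_j^*)=\tfrac1{2n}$. There is no real obstacle here. The only points to check are that Lemma~\ref{lem:basic-kms} applies to these generators, which holds because each generator is an eigenvector of the dynamics, and that the statement is about $\beta=1$ without any assumption on whether $m$ and $n$ are equal. For $\beta=1$ the factor $e^{-\beta\log m}$ equals exactly $1/m$ in every case, so the argument covers $m=n$ as well.
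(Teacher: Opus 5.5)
Your proof is correct and is essentially the paper's own argument. You apply Lemma~\ref{lem:basic-kms} at $\beta=1$ to $s_i$ and $t_j$, sum one family to obtain $\varphi(p)=\varphi(q)$, and use $p+q=1$ to get $\tfrac12$, from which the individual values follow.
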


\begin{proof}
At $\beta=1$, Lemma~\ref{lem:basic-kms} gives
$\varphi(s_is_i^*)=\frac1m\varphi(q)$ and
$\varphi(t_jt_j^*)=\frac1n\varphi(q)$. Summing either family gives
$\varphi(p)=\varphi(q)$. Since $p+q=1$, both values are $1/2$, and the
previous formulas for the individual range projections follow.
\end{proof}

The diagonal case $m=n$ and the asymmetric case $m\ne n$ already differ at
the level of traces. We shall exhibit many tracial states on
$\mathcal O_{r,r}$ in Section~\ref{ss:diagonal}, whereas for $m\ne n$ there are
none. This is the familiar obstruction that rules out traces on Cuntz
algebras, and it also drops out of \eqref{eq:partition}.

\begin{corollary}\label{cor:no-traces}
If $m\ne n$, then $\Omn$ has no tracial states.
\end{corollary}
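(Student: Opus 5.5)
The plan is to observe that a tracial state is exactly a $\KMS_0$ state, and then to apply Theorem~\ref{thm:partition-equation} at $\beta=0$. The stated convention says the $\KMS_0$ condition is precisely the trace identity $\varphi(ab)=\varphi(ba)$, so any tracial state $\tau$ on $\Omn$ is a $\KMS_0$ state for the normalized dynamics \eqref{eq:normalized-action}. Indeed, for every $\sigma$ one has $\sigma_{i\cdot 0}=\id$ on analytic elements.

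With $\beta=0$, the identity \eqref{eq:partition} reads $\sum_{i=1}^m 1=\sum_{j=1}^n 1$, that is, $m=n$. So when $m\neq n$ no $\KMS_0$ state exists, and hence no tracial state exists.

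For robustness I would also write the argument directly, avoiding any reliance on the analytic-element formalism at $\beta=0$. Let $\tau$ be a tracial state. From \eqref{rel:common-domain} and the trace property we get $\tau(s_is_i^*)=\tau(s_i^*s_i)=\tau(q)$, and likewise $\tau(t_jt_j^*)=\tau(q)$. Summing over each family and using \eqref{rel:common-range} gives $m\,\tau(q)=\tau(p)=n\,\tau(q)$.

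It remains to show $\tau(q)>0$, and this is the only point needing care. If $\tau(q)=0$, then $\tau(p)=m\,\tau(q)=0$. Combined with \eqref{rel:pq} this gives $\tau(1)=\tau(p)+\tau(q)=0$, contradicting $\tau(1)=1$. So $\tau(q)>0$, and dividing yields $m=n$, which contradicts the assumption $m\neq n$.

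I do not expect a genuine obstacle here. The only subtlety is making sure the $\beta=0$ case of Theorem~\ref{thm:partition-equation} is legitimately covered by the paper's convention, and the direct computation above sidesteps that question entirely.
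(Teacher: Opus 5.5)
Your proposal is correct and matches the paper's proof, which likewise observes that a tracial state is a $\mathrm{KMS}_0$ state and sets $\beta=0$ in \eqref{eq:partition} to obtain $m=n$. Your direct computation, including the check that $\tau(q)>0$, is just the proof of Theorem~\ref{thm:partition-equation} specialized to $\beta=0$. It is a harmless safeguard but adds no new idea.
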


\begin{proof}
For any dynamics the $\KMS_0$ condition is precisely the trace identity,
so a tracial state is a $\KMS_0$ state for \eqref{eq:general-action}.
Setting $\beta=0$ in \eqref{eq:partition} gives $m=n$.
\end{proof}

\section{Full corner reduction}
\label{ss:corner-algebra}

Relations \eqref{rel:common-domain}–\eqref{rel:common-range} show, in particular, that $q$ is equivalent to each of the projections $s_is_i^*$. Thus, $q$ is a full projection, and $\Omn$ can be viewed as a matrix algebra over the corner $q\Omn q$. We show below that restricting to this corner gives a bijection with the corresponding KMS simplex at every inverse temperature, as well as an analogous result for ground states.

Throughout the remainder of this section $m,n\ge1$, $\sigma$ denotes the
normalized dynamics \eqref{eq:normalized-action} on $\Omn$,
and we write
$
  A:=q\Omn q,$ 
$
  p_i:=s_is_i^{*},$
and $
  v_{ij}:=s_i^{*}t_j,$ where the indices $i,j$ are always assumed to range over $1\le i\le m$ and $1\le j\le n.
$
We further set
$$
  u_0:=q,\quad u_i:=s_i,\qquad d_0:=1,\quad d_i:=m,
  \qquad
  Z_\beta:=\sum_{a=0}^{m}d_a^{-\beta}=1+m^{1-\beta},
$$
and finally $\varepsilon:=\log(n/m)$. Note that, if a $\KMS_\beta$ state exists, then by Theorem~\ref{thm:partition-equation}, or equivalently the proof of Corollary~\ref{cor:beta-one}, $Z_\beta=1+m^{1-\beta}=1+n^{1-\beta}.$

\begin{lemma}\label{lem:corner-elementary}
In $\Omn$ the following hold:
\begin{enumerate}[label=\textup{(\roman*)}]
  \item $s_i=ps_iq$ and $t_j=pt_jq$; equivalently
        $s_i^{*}=qs_i^{*}p$ and $t_j^{*}=qt_j^{*}p$.
  \item $qs_i=qt_j=0$ and $s_i^{*}q=t_j^{*}q=0$.
  \item $q,p_1,\dots,p_m$ are pairwise orthogonal projections with
        $q+\sum_{i}p_i=1$, and
        $$
        u_a^{*}u_b=\delta_{ab}q,
        \qquad
        \sum_{a=0}^{m}u_au_a^{*}=1 .
        $$
  \item $s_is_k=s_it_j=t_js_i=t_jt_\ell=0$ for all indices, and likewise
        for the adjoints.
  \item $q\ne0$; in particular $A$ is a unital $C^{*}$-algebra with unit
        $q$.
\end{enumerate}
\end{lemma}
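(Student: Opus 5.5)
The proof is a direct verification from the defining relations \eqref{rel:sorth}--\eqref{rel:pq}, using only that partial isometries satisfy $x=xx^*x$. I will prove items (i), (ii) and (iv) first, then (iii), and finally (v), which needs a representation.

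For (i), each $s_i$ is a partial isometry with initial projection $s_i^*s_i=q$, so $s_i=s_iq$. Its range projection $p_i=s_is_i^*$ satisfies $p_i\le p$ by \eqref{rel:common-range}, because $p$ is a sum of the positive elements $p_k$. Hence $ps_i=ps_is_i^*s_i=p\,p_i\,s_i=p_is_i=s_i$, using $pp_i=p_i$ (from $p_i\le p$ with $p$ a projection). The same argument applied to $t_j$ with the second decomposition of $p$ gives $t_j=pt_jq$, and taking adjoints gives the starred versions.

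Items (ii) and (iv) follow from (i) together with $pq=0$ in \eqref{rel:pq}. For (ii), $qs_i=qps_iq=0$, and similarly for $t_j$; the adjoint statements follow. For (iv), $s_is_k=s_iq\,ps_k=0$ because $qp=0$; the products $s_it_j$, $t_js_i$ and $t_jt_\ell$ vanish by the same computation, and so do the corresponding adjoint products.

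For (iii), I first need the $p_i$ to be pairwise orthogonal. For $i\ne k$, \eqref{rel:sorth} gives $p_ip_k=s_i(s_i^*s_k)s_k^*=0$. Then $q,p_1,\dots,p_m$ are pairwise orthogonal, since $qp_i=qpp_i=0$, and $q+\sum_ip_i=q+p=1$. For the identities involving the $u_a$:
\begin{itemize}
\item $u_0^*u_0=q$.
\item $u_0^*u_i=qs_i=0$ by (ii), and likewise $u_i^*u_0=s_i^*q=0$.
\item $u_i^*u_k=s_i^*s_k=\delta_{ik}q$ by \eqref{rel:sorth} and \eqref{rel:common-domain}.
\item $\sum_au_au_a^*=q+\sum_ip_i=1$.
\end{itemize}

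Item (v) is the only step needing something beyond algebra, because $q\ne0$ is a nondegeneracy statement about the universal algebra. By Proposition~\ref{prop:universal} it suffices to exhibit one unital $C^*$-algebra $B$ containing a tame family satisfying \eqref{rel:sorth}--\eqref{rel:pq} with $Q\ne0$. Since $q=1-p$ is a projection, its image would then be $Q\ne0$, forcing $q\ne0$.

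I would try to take the representation that the paper's conformal branching models of Section~\ref{sec:regular-models} will supply. Independently of those, here is a concrete model. Let $H=\ell^2(\mathbb F)\oplus(\ell^2(\mathbb F)\otimes\C^{mn})$ for the free group $\mathbb F$ on $mn$ generators $g_{ij}$. Define $S_i$ to map the first summand isometrically onto $\ell^2(\mathbb F)\otimes\operatorname{span}\{e_{ij}:j\}$, and define $T_j$ to map it onto $\ell^2(\mathbb F)\otimes\operatorname{span}\{e_{ij}:i\}$, with the $T_j$ twisted by the left-regular unitaries $\lambda(g_{ij})$. All the resulting words are then partial isometries between coordinate subspaces times unitaries, which gives tameness. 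Here $Q$ is the projection onto the first summand, which is nonzero.

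Only (v) is substantive: checking the relations and tameness for this concrete model is the real work, and the rest is bookkeeping. Once $q\ne0$ is established, $A=qBq$ is a corner of a unital $C^*$-algebra with unit $q$.
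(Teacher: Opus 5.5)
Your items (i)--(iv) follow the paper's proof essentially line by line.

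For (v) you take a genuinely different route. The paper argues purely algebraically: if $q=0$ then $s_i=s_iq=0$ for all $i$, so $p=\sum_ip_i=0$ and $1=p+q=0$. This shows that $q\neq0$ is equivalent to $\Omn\neq0$, and the paper treats the latter as known from Ara--Exel--Katsura. You instead establish the nondegeneracy directly by producing a representation. This makes explicit the input the paper leaves tacit, at the cost of a construction.

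Note that the same one-line computation, run inside $B$, shows that $Q\neq0$ holds automatically once $B\neq0$. So you only need some nonzero representation. Your first candidate does the job and is not circular. Proposition~\ref{prop:branch-representation} relies only on Proposition~\ref{prop:universal}, and $\pi_X(q)=S_1^*S_1=1_Q\delta_e$ is nonzero in $B_X$, because $E_X(1_Q\delta_e)=1_Q$ and $\mu(Q)=\tfrac12$.

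Your concrete $\ell^2(\mathbb F)$ model, however, is not yet a proof. Tameness depends on how the isometries of the first summand onto $\ell^2(\mathbb F)\otimes\C^n$ and $\ell^2(\mathbb F)\otimes\C^m$ are chosen, and you do not specify them. When $m,n\ge2$ and the choices are arbitrary, $S_1^*T_1T_1^*S_1$ and $T_2^*S_1S_1^*T_2$ are essentially arbitrary infinite, co-infinite projections on the first summand, and they need not commute. If they do not, the word $S_1^*T_1T_1^*S_1T_2^*S_1S_1^*T_2$ is a product of two noncommuting projections, hence not a partial isometry. Products of partial isometries between coordinate subspaces and unitaries are not partial isometries in general, so your tameness justification does not go through.

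The clean fix is to make every operator a partial-bijection operator for one fixed orthonormal basis. Take countably infinite sets $Q$ and $P=\bigsqcup_iP_i=\bigsqcup_jR_j$ with all blocks infinite. Choose bijections $Q\to P_i$ and $Q\to R_j$, and let $S_i,T_j$ be the induced operators on $\ell^2(Q\sqcup P)$. Every word is then induced by a partial bijection, hence is a partial isometry, and the defining relations are immediate. The twists by $\lambda(g_{ij})$ play no role in this argument.
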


\begin{proof}
\begin{enumerate}[label=\textup{(\roman*)}]
\item From $s_i=s_is_i^{*}s_i=p_is_i$ and $p_i\le p$ we get $ps_i=s_i$,
while $s_iq=s_i(s_i^{*}s_i)=s_i$ by \eqref{rel:common-domain}. The
argument for $t_j$ is the same, using $t_jt_j^{*}\le p$ from
\eqref{rel:common-range}.

\item By (i) and $pq=qp=0$ we have $qs_i=q(ps_i)=(qp)s_i=0$, and
$s_i^{*}q=(qs_i)^{*}=0$.

\item By \eqref{rel:sorth}, $p_ip_k=s_i(s_i^{*}s_k)s_k^{*}=0$ for $i\ne
k$, and $\sum_ip_i=p$ by \eqref{rel:common-range}; now use $pq=0$ and
$p+q=1$. The identities $u_a^{*}u_b=\delta_{ab}q$ are
\eqref{rel:sorth} and \eqref{rel:common-domain} for $a,b\ge1$, and (ii)
together with $q^{2}=q$ otherwise.

\item By (i), $s_is_k=(s_iq)(ps_k)=s_i(qp)s_k=0$, and the remaining cases
are identical.

\item By contradiction, if $q=0$ then $s_i=s_iq=0$ for all $i$, hence $p=\sum_ip_i=0$ and
$1=p+q=0$.
\end{enumerate}
\end{proof}

We next describe the corner $A=q\Omn q$ and the dynamics
induced on it. The elements $v_{ij}$ introduced above provide a convenient family of
generators for $A$, and their relations will play an important role
below. As we will see, the dynamics is now considerably simpler, with the same action on every generator.

\begin{proposition}\label{prop:corner-generators}
The corner $A$ is generated by
the family $\{v_{ij}\}$, which is tame and satisfies
\begin{equation}\label{eq:corner-relations}
  \sum_{j=1}^{n}v_{ij}v_{ij}^{*}=q,
  \qquad
  \sum_{i=1}^{m}v_{ij}^{*}v_{ij}=q.
\end{equation}
Moreover $\sigma$ restricts to a strongly continuous action $\sigma^{A}$
of $\mathbb{R}$ on $A$, and
\begin{equation}\label{eq:corner-dynamics}
  \sigma^{A}_t(v_{ij})=e^{it\varepsilon}v_{ij},
\end{equation}
where $\varepsilon=\log (n/m).$
\end{proposition}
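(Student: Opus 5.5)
The plan is to prove the four claims in turn: membership of the $v_{ij}$ in $A$ and the relations, tameness, generation of $A$, and the dynamics. Lemma~\ref{lem:corner-elementary} is used throughout.

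\emph{Membership and relations.} By Lemma~\ref{lem:corner-elementary}(i), $v_{ij}=s_i^*t_j=(qs_i^*p)(pt_jq)$, so $v_{ij}=qv_{ij}q\in A$. For the row relation,
\[
\sum_j v_{ij}v_{ij}^*=s_i^*\Big(\sum_j t_jt_j^*\Big)s_i=s_i^*ps_i=s_i^*s_i=q,
\]
using \eqref{rel:common-range} and $ps_i=s_i$. The column relation is symmetric: $\sum_i t_j^*s_is_i^*t_j=t_j^*pt_j=q$.

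\emph{Tameness.} Every word in the $v_{ij}$ and $v_{ij}^*$ expands, letter by letter, into a word in $s_i,t_j,s_i^*,t_j^*$. Each such word is a partial isometry by tameness of $\{s_i,t_j\}$ (Definition~\ref{def:tame}). So the semigroup generated by the $v_{ij}^{(*)}$ sits inside the original tame semigroup, and $\{v_{ij}\}$ is tame.

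\emph{Generation.} This is the main step. The closed span of words in $s_i,t_j$ and their adjoints is all of $\Omn$, since $1=p+q$ and both $p$ and $q$ are finite sums of such words. Hence $A$ is the closed span of the elements $qwq$, with $w$ such a word. By Lemma~\ref{lem:corner-elementary}(ii),(iv), a word is nonzero only if its letters alternate between ``starred'' letters ($s_i^*,t_j^*$, which map $p$ into $q$) and ``unstarred'' letters ($s_i,t_j$, which map $q$ into $p$). Indeed, two consecutive unstarred letters give zero by (iv), and two consecutive starred letters give zero by the adjoint of (iv). If $qwq\ne0$, then $w$ must begin with a starred letter, since $qs_i=qt_j=0$, and end with an unstarred letter, since $s_i^*q=t_j^*q=0$. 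Such an alternating word therefore factors as a product of pairs $x^*y$ with $x,y\in\{s_1,\dots,s_m,t_1,\dots,t_n\}$.

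It then suffices to show that each of the four pair types lies in $C^*(\{v_{ij}\})$:
\begin{itemize}
\item $s_i^*t_j=v_{ij}$ and $t_j^*s_i=v_{ij}^*$ directly.
\item $s_i^*s_k=\delta_{ik}q$.
\item $t_j^*t_\ell=\delta_{j\ell}q$.
\end{itemize}
Here $q$ is the unit of $C^*(\{v_{ij}\})$, since $q=\sum_j v_{1j}v_{1j}^*$ by the row relation (using $m\ge1$). Thus every $qwq$ lies in $C^*(\{v_{ij}\})$. Words of length $0$ give just $q$, which is covered by the same identity. This shows $A=C^*(\{v_{ij}\})$. The only care needed is the bookkeeping of the alternation, which rests on Lemma~\ref{lem:corner-elementary}.

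\emph{Dynamics.} Since $\sigma_t(q)=\sigma_t(s_1^*s_1)=q$, each $\sigma_t$ maps $A=q\Omn q$ onto itself. Strong continuity of $\sigma^A$ is inherited from $\sigma$. Finally,
\[
\sigma_t(v_{ij})=\overline{m^{it}}\,n^{it}\,s_i^*t_j=(n/m)^{it}v_{ij}=e^{it\varepsilon}v_{ij},
\]
which is \eqref{eq:corner-dynamics}.
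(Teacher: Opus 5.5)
Your proposal is correct and takes essentially the same route as the paper: you get the relations from $s_i^*ps_i=q$ and $t_j^*pt_j=q$, inherit tameness from $\{s_i,t_j\}$, prove generation by the alternation argument of Lemma~\ref{lem:corner-elementary}(ii),(iv) with words grouped into starred--unstarred pairs, and obtain the dynamics from $\sigma_t(q)=q$. The only differences are cosmetic: you absorb the unit via $1=p+q$, and you handle the outer $q$'s by noting $q=\sum_j v_{1j}v_{1j}^*\in C^*(\{v_{ij}\})$, where the paper instead observes $qwq=w$.
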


\begin{proof}
The relations \eqref{eq:corner-relations} follow directly from
Lemma~\ref{lem:corner-elementary}(i) and \eqref{rel:common-range}:
$$
  \sum_j v_{ij}v_{ij}^{*}=s_i^{*}\left(\sum_jt_jt_j^{*}\right)s_i
  =s_i^{*}ps_i=s_i^{*}s_i=q,
  \qquad
  \sum_i v_{ij}^{*}v_{ij}=t_j^{*}pt_j=q .
$$
Each $v_{ij}$ is a word in $s_1,\dots,s_m,t_1,\dots,t_n$ and their
adjoints, and every word in the $v_{ij}$ and their adjoints is again such
a word. Tameness of $\{v_{ij}\}$ is therefore inherited from tameness of
$\{s_i,t_j\}$.

For the generation statement, recall that $\Omn$ is the closed
linear span of the words $w=g_1\cdots g_k$ with
$g_r\in\{s_i,t_j,s_i^{*},t_j^{*}\}$, together with $1$. Since $x\mapsto
qxq$ is continuous and $q1q=q$, the algebra $A$ is the closed linear span
of the elements $qwq$. Fix such a word with $qwq\ne0$. By
Lemma~\ref{lem:corner-elementary}(iv) no two consecutive letters of $w$
are both starred or both unstarred, so the letters alternate. If $g_1$
were unstarred then $qg_1=0$ by Lemma~\ref{lem:corner-elementary}(ii), so
$g_1$ is starred and $qg_1=g_1$; if $g_k$ were starred then $g_kq=0$, so
$g_k$ is unstarred and $g_kq=g_k$. Hence $k$ is even, $qwq=w$, and
grouping the letters in consecutive pairs gives
$
  w=(g_1g_2)(g_3g_4)\cdots(g_{k-1}g_k),
$
each pair being of the form $s_i^{*}s_k=\delta_{ik}q$,
$t_j^{*}t_\ell=\delta_{j\ell}q$, $s_i^{*}t_j=v_{ij}$ or
$t_j^{*}s_i=v_{ij}^{*}$. Thus $qwq\in C^{*}(\{v_{ij}\})$, and
$q=\sum_jv_{1j}v_{1j}^{*}$ lies in $C^{*}(\{v_{ij}\})$ as well.

Finally $\sigma_t(q)=q$, so $\sigma_t(A)=A$ and $\sigma^{A}:=\sigma|_A$ is
a strongly continuous action; \eqref{eq:corner-dynamics} is immediate from
\eqref{eq:normalized-action}.
\end{proof}

Proposition~\ref{prop:corner-generators} exhibits generators and relations for the $q$-corner
$A$, but says nothing about further relations among them. The next
proposition settles this by showing a universal property for $A$, showing that the two families in \eqref{eq:corner-relations}, together with tameness, present $A$ completely.
 
\begin{proposition}\label{prop:Auniversal}
Let $B$ be a unital $C^*$-algebra containing a tame family of partial
isometries $\{w_{ij}\}_{i\le m,j\le n}$ with
$
  \sum_{j=1}^n w_{ij}w_{ij}^{*}=1_B$ and $
  \sum_{i=1}^m w_{ij}^{*}w_{ij}=1_B.
$
Then there is a unique unital $*$-homomorphism $\rho\colon A\to B$ with
$\rho(v_{ij})=w_{ij}$. Thus $A$ is the universal unital $C^*$-algebra
generated by a tame family satisfying \eqref{eq:corner-relations}.
\end{proposition}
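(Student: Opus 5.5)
Uniqueness is immediate from Proposition~\ref{prop:corner-generators}: $A$ is generated by the $v_{ij}$, and $q$ is a polynomial in them, so a unital $*$-homomorphism is determined by the images $w_{ij}$. For existence, the plan is to build from $\{w_{ij}\}$ a tame $(m,n)$-system in the larger algebra $M_{m+1}(B)$, invoke the universal property of $\Omn$ (Proposition~\ref{prop:universal}), and then compress to the corner.

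Write $e_{ab}$ ($0\le a,b\le m$) for the matrix units of $M_{m+1}(B)$. Set
\[
Q:=e_{00}\otimes 1_B,\qquad P:=\sum_{i=1}^m e_{ii}\otimes 1_B,\qquad S_i:=e_{i0}\otimes 1_B,\qquad T_j:=\sum_{k=1}^m e_{k0}\otimes w_{kj}.
\]
This imitates the matrix picture $x\mapsto(u_a^*xu_b)$ of Theorem~\ref{thm:A}(c), where $t_j=\sum_k s_k v_{kj}$. The relations are then checked directly. One has $S_i^*S_k=\delta_{ik}Q$ and $\sum_i S_iS_i^*=P$. Next, $T_j^*T_\ell=e_{00}\otimes\sum_k w_{kj}^*w_{k\ell}$; for $j=\ell$ this equals $Q$ by the column relation. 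For $j\neq\ell$ it vanishes: the row relation makes the projections $w_{kj}w_{kj}^*$ ($j=1,\dots,n$) orthogonal, hence $w_{kj}^*w_{k\ell}=w_{kj}^*(w_{kj}w_{kj}^*)(w_{k\ell}w_{k\ell}^*)w_{k\ell}=0$. Finally $\sum_j T_jT_j^*=\sum_{k,k'}e_{kk'}\otimes\sum_j w_{kj}w_{k'j}^*$. The diagonal blocks equal $1_B$ by the row relation, and the off-diagonal blocks vanish because $w_{k'j}^*w_{kj}=0$ for $k\ne k'$ (the same argument, using that the column relation makes the $w_{kj}^*w_{kj}$ orthogonal in $k$) forces $w_{kj}w_{k'j}^*=w_{kj}(w_{kj}^*w_{kj})(w_{k'j}^*w_{k'j})w_{k'j}^*=0$. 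So $\sum_jT_jT_j^*=P$, and $P+Q=1$, $PQ=0$.

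The main obstacle is tameness of $\{S_i,T_j\}$. I would argue as in the proof of Proposition~\ref{prop:corner-generators}. A word in the letters $S_i,T_j,S_i^*,T_j^*$ vanishes unless the letters alternate between starred and unstarred ones, since $S_iS_k=S_iT_j=\cdots=0$ by the matrix structure. A nonzero alternating word therefore has the form $x\,(g_1g_2)\cdots(g_{2r-1}g_{2r})\,y$, where $x$ is empty or a single unstarred letter, $y$ is empty or a single starred letter, and each pair is $S_i^*S_k$, $T_j^*T_\ell$, $S_i^*T_j$ or $T_j^*S_i$. Each pair equals $e_{00}\otimes c$ with $c\in\{\delta\,1_B,\,w_{ij},\,w_{ij}^*\}$, so the middle product is $e_{00}\otimes u$ for a word $u$ in the $w$'s, which is a partial isometry by tameness of $\{w_{ij}\}$. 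Multiplying on the left by $S_i$ gives $e_{i0}\otimes u$, which is clearly a partial isometry. Multiplying by $T_j$ gives $\sum_k e_{k0}\otimes w_{kj}u$, whose partial-isometry property reduces to checking that $u^*\big(\sum_k w_{kj}^*w_{kj}\big)u=u^*u$ is a projection, which holds by the column relation. Multiplying on both sides combines these two computations; for instance $T_j(e_{00}\otimes u)T_\ell^*=\sum_{k,k'}e_{kk'}\otimes w_{kj}uw_{k'\ell}^*$. For such two-sided products I would check $XX^*X=X$ by reducing, through the column relations, to the fact that $w_{kj}uw_{k\ell}^*$ is a partial isometry (tameness of the $w$'s) together with orthogonality of the ranges $w_{kj}w_{kj}^*$ across different $k$. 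This bookkeeping is where care is needed. If it becomes messy, an alternative is to note that the products in question lie in the inverse semigroup generated by elements of the form $e_{ab}\otimes(\text{$w$-word})$, and to verify directly that this set is closed under products, up to sums with orthogonal supports.

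Granting tameness, Proposition~\ref{prop:universal} yields a unital $\pi:\Omn\to M_{m+1}(B)$ with $\pi(q)=Q$. Then
\[
\pi(v_{ij})=S_i^*T_j=e_{00}\otimes w_{ij},
\]
so restricting $\pi$ to $A=q\Omn q$ and identifying $Q\,M_{m+1}(B)\,Q\cong B$ gives a unital $\rho$ with $\rho(v_{ij})=w_{ij}$.
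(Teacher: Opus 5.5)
Your construction ($S_i=e_{i0}\otimes 1_B$, $T_j=\sum_k e_{k0}\otimes w_{kj}$) matches the paper's proof, as do the relation checks, the reduction to alternating words, and the final compression to $Q\,M_{m+1}(B)\,Q$. The step you do not actually establish is tameness for words with a free letter at \emph{both} ends, such as $T_j(e_{00}\otimes u)T_\ell^*$. You defer it as bookkeeping and propose to reduce it to tameness of $w_{kj}uw_{k\ell}^*$ plus orthogonality of the ranges $w_{kj}w_{kj}^*$ across different $k$.

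That orthogonality is false in general. Let $\epsilon_{ab}$ be the matrix units of $B=M_2(\mathbb C)$. The family $w_{11}=\epsilon_{21}$, $w_{12}=\epsilon_{12}$, $w_{21}=\epsilon_{22}$, $w_{22}=\epsilon_{11}$ is tame and satisfies both the row and column relations, yet $w_{11}w_{11}^*=w_{21}w_{21}^*=\epsilon_{22}$. The column relation only makes the \emph{sources} $w_{kj}^*w_{kj}$ orthogonal in $k$, which gives $w_{kj}w_{k'j}^*=0$. The same slip appears when you write $w_{k'j}^*w_{kj}=0$ while checking $\sum_jT_jT_j^*=P$; in this example $w_{21}^*w_{11}=\epsilon_{21}\neq 0$. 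There it is harmless, because your displayed computation uses the correct fact. Your fallback (closure of an inverse semigroup up to sums with orthogonal supports) is likewise not yet an argument.

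The repair is the computation you already did for one-sided products, applied at both ends at once; this is exactly how the paper argues. Every nonzero word has the form $W=X(e_{00}\otimes u)Y^*$, where $X,Y\in\{Q,S_1,\dots,S_m,T_1,\dots,T_n\}$ and $u$ is a word in the $w_{ij}$ or $1_B$. Every such end letter satisfies $X^*X=Y^*Y=Q$; for $T_j$ this is the column relation. Hence $W^*W=Y(e_{00}\otimes u^*u)Y^*$, and
\[
(W^*W)^2=Y(e_{00}\otimes u^*u)\,Q\,(e_{00}\otimes u^*u)Y^*=W^*W,
\]
because $u^*u$ is a projection by tameness of the $w_{ij}$. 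So every word is a partial isometry. This needs neither range orthogonality nor tameness of $w_{kj}uw_{k'\ell}^*$, and with it your proof is complete.
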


\begin{proof}
Uniqueness is immediate from Proposition~\ref{prop:corner-generators}, since
the $v_{ij}$ generate $A$. For existence we produce a copy of the defining
relations of $\Omn$ inside $M_{m+1}(B)$.

If a finite sum of projections $\sum_kP_k$ is again a projection $P$, then the
$P_k$ are pairwise orthogonal. By the relations, for fixed $i$ the projections
$\{w_{ij}w_{ij}^{*}\}_j$ are therefore pairwise orthogonal, and for fixed $j$
so are $\{w_{ij}^{*}w_{ij}\}_i$. Consequently, for $i\ne k$,
$$
  w_{ij}w_{kj}^{*}
  =w_{ij}(w_{ij}^{*}w_{ij})(w_{kj}^{*}w_{kj})w_{kj}^{*}=0 ,
$$
and symmetrically $w_{ij}^{*}w_{i\ell}=0$ for $j\ne\ell$.

Write $E_{ab}$, $0\le a,b\le m$, for the matrix units and put
$$
  S_i=E_{i0}\otimes1_B,
  \qquad
  T_j=\sum_{i=1}^{m}E_{i0}\otimes w_{ij},
  \qquad
  Q=E_{00}\otimes1_B,
  \qquad
  P=\sum_{i=1}^{m}E_{ii}\otimes1_B .
$$
Then $S_i^{*}S_k=\delta_{ik}Q$ and $\sum_iS_iS_i^{*}=P$, while the hypothesis and the two
orthogonality relations just derived give
$$
  T_j^{*}T_\ell=E_{00}\otimes\sum_{i}w_{ij}^{*}w_{i\ell}=\delta_{j\ell}Q,
  \qquad
  \sum_{j}T_jT_j^{*}=\sum_{i}E_{ii}\otimes\sum_{j}w_{ij}w_{ij}^{*}=P .
$$
Finally $PQ=0$ and $P+Q=1$, so $\{S_i,T_j\}$ satisfies
\eqref{rel:sorth}--\eqref{rel:pq} with $P,Q$ in place of $p,q$.

For tameness, note that all of $S_i,T_j$ lie in $(E_{10}+\dots+E_{m0})\otimes B$,
so any product of two unstarred letters vanishes, as does any product of two
starred ones, and a nonzero word alternates. Grouping the letters of such a word into consecutive starred--unstarred pairs,
beginning at the first starred letter and leaving at most one letter free at
the end, and using
$$
  S_i^{*}T_j=E_{00}\otimes w_{ij},
  \qquad
  T_j^{*}S_i=E_{00}\otimes w_{ij}^{*},
  \qquad
  S_i^{*}S_k=\delta_{ik}Q,
  \qquad
  T_j^{*}T_\ell=\delta_{j\ell}Q,
$$
every nonzero word takes the form $XVY^{*}$, where $V=E_{00}\otimes u$ with $u$
a word in the $w_{ij}$ and their adjoints or $1_B$, and
$X,Y\in\{Q,S_1,\dots,S_m,T_1,\dots,T_n\}$, the value $Q$ occurring when the
corresponding end carries no free letter. In every case $X^{*}X=Y^{*}Y=Q$. Hence
$$
  \left(XVY^{*}\right)^{*}\left(XVY^{*}\right)
  =YV^{*}\left(X^{*}X\right)VY^{*}
  =Y\left(V^{*}V\right)Y^{*},
$$
which is idempotent because
$$Y(V^{*}V)(Y^{*}Y)(V^{*}V)Y^{*}=Y(V^{*}V)Q(V^{*}V)Y^{*}=Y(V^{*}V)Y^{*},$$ and
self-adjoint. So every word is a partial isometry and $\{S_i,T_j\}$ is tame.

By Proposition~\ref{prop:universal} there is a unital $*$-homomorphism
$\pi\colon\Omn\to M_{m+1}(B)$ with $\pi(s_i)=S_i$ and
$\pi(t_j)=T_j$. Then $\pi(q)=S_1^{*}S_1=Q$ and
$\pi(v_{ij})=S_i^{*}T_j=E_{00}\otimes w_{ij}$, so
$\pi(A)\subseteq QM_{m+1}(B)Q=E_{00}\otimes B$, and writing
$\pi(x)=E_{00}\otimes\rho(x)$ for $x\in A$ gives the required homomorphism $\rho$.
\end{proof}

Note that the relations \eqref{eq:corner-relations} are the diagonal entries of $vv^{*}=I_{m}$ and
$v^{*}v=I_{n}$ for $v=(v_{ij})\in M_{m\times n}(A)$, and the off-diagonal entries are
automatic, since the second paragraph of the proof of Proposition~\ref{prop:Auniversal} shows that a tame family
satisfying \eqref{eq:corner-relations} also satisfies $v_{ij}v_{kj}^{*}=0$ for $i\neq k$
and $v_{ij}^{*}v_{i\ell}=0$ for $j\neq\ell$. Hence $A$ is universal for tame families
making $(v_{ij})$ an $m\times n$ unitary matrix.
Without tameness, the universal algebra for these relations is the known
$U^{\mathrm{nc}}_{m,n}$ \cite{Brown1981, McClanahan1992, McClanahan1993}, whose generators are not partial
isometries for $m,n\geq 2$, so that $A$ is its tame quotient. In particular, the next proposition is the tame counterpart of the isomorphisms
$L_{m,n}\cong M_{m+1}(U^{\mathrm{nc}}_{m,n})\cong M_{n+1}(U^{\mathrm{nc}}_{m,n})$; see e.g. \cite[Introduction]{AEK2013} and the references therein. 

\begin{proposition}\label{prop:matrix-picture}
The map
$$
  \Xi\colon\Omn\to M_{m+1}(A),
  \qquad
  \Xi(x)=\left(u_a^{*}xu_b\right)_{a,b=0}^{m},
$$
is a unital $*$-isomorphism with $\Xi(y)=E_{00}\otimes y$ for $y\in A$ and
$\Xi(p_i)=E_{ii}\otimes q$. Under $\Xi$ the normalized dynamics becomes
\begin{equation}\label{eq:gamma}
  \gamma_t\left((x_{ab})_{ab}\right)
  =\left(\left(d_ad_b^{-1}\right)^{it}\sigma^{A}_t(x_{ab})\right)_{ab},
\end{equation}
where $\gamma_t = \Xi \circ \sigma_t \circ \Xi^{-1}$.
\end{proposition}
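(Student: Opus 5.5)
The proof is the standard matrix-units argument for a full corner. By Lemma~\ref{lem:corner-elementary}(iii), the elements $u_0=q,u_1=s_1,\dots,u_m=s_m$ satisfy $u_a^*u_b=\delta_{ab}q$ and $\sum_a u_au_a^*=1$, so the $u_au_b^*$ behave as matrix units. Define $\Xi(x)=(u_a^*xu_b)_{a,b}$. Each entry lies in $A$, since $u_a^*=qu_a^*$ and $u_b=u_bq$ by Lemma~\ref{lem:corner-elementary}(i).

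First I check that $\Xi$ is a $*$-homomorphism. Linearity and $\Xi(x^*)=\Xi(x)^*$ are clear. Multiplicativity follows by inserting $1=\sum_c u_cu_c^*$:
\[
\Xi(xy)_{ab}=u_a^*xyu_b=\sum_c (u_a^*xu_c)(u_c^*yu_b)=\big(\Xi(x)\Xi(y)\big)_{ab}.
\]
Unitality: $\Xi(1)_{ab}=u_a^*u_b=\delta_{ab}q$, which is the unit of $M_{m+1}(A)$ because $q$ is the unit of $A$ (Lemma~\ref{lem:corner-elementary}(v)).

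Next I exhibit an inverse,
\[
\Theta\big((x_{ab})\big)=\sum_{a,b}u_ax_{ab}u_b^*.
\]
For $x\in\Omn$, $\Theta(\Xi(x))=\sum_{a,b}u_au_a^*xu_bu_b^*=x$. Conversely, for $x_{cd}\in A$, $u_a^*\big(\sum u_cx_{cd}u_d^*\big)u_b=qx_{ab}q=x_{ab}$. So $\Xi$ is bijective, hence a $*$-isomorphism. The special values follow directly:
\begin{itemize}
\item For $y\in A$: $u_a^*yu_b=0$ unless $a=b=0$, because $s_i^*q=0$ (Lemma~\ref{lem:corner-elementary}(ii)). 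Hence $\Xi(y)=E_{00}\otimes y$.
\item For $p_i=s_is_i^*$: $u_a^*s_is_i^*u_b=\delta_{ai}\delta_{bi}q$, hence $\Xi(p_i)=E_{ii}\otimes q$.
\end{itemize}

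For the dynamics, note that $\sigma_t(u_a)=d_a^{it}u_a$ by \eqref{eq:normalized-action}, with $d_0=1$ because $\sigma_t(q)=q$. Therefore
\[
u_a^*\sigma_t(x)u_b=\sigma_t\big(\sigma_{-t}(u_a)^*\,x\,\sigma_{-t}(u_b)\big)
=d_a^{it}d_b^{-it}\,\sigma^A_t(u_a^*xu_b).
\]
Here I used $\sigma_{-t}(u_a)^*=\big(d_a^{-it}u_a\big)^*=d_a^{it}u_a^*$ and $\sigma_{-t}(u_b)=d_b^{-it}u_b$. This gives $\Xi\circ\sigma_t=\gamma_t\circ\Xi$, which is \eqref{eq:gamma}.

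There is no real obstacle. The only points needing care are the phase conjugation in this last step and confirming that the unit of $M_{m+1}(A)$ is $\operatorname{diag}(q,\dots,q)$.
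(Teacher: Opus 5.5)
Your proof is correct and follows the paper's argument essentially step for step: the entries lie in $A$ by Lemma~\ref{lem:corner-elementary}, multiplicativity comes from inserting $1=\sum_c u_cu_c^*$, bijectivity from $x=\sum_{a,b}u_a\Xi(x)_{ab}u_b^*$, and the twist from $\sigma_t(u_a)=d_a^{it}u_a$. Your $\sigma_{-t}$ manipulation in the last step only rearranges the paper's direct computation $\sigma_t(u_a^*xu_b)=d_a^{-it}d_b^{it}u_a^*\sigma_t(x)u_b$, and it yields the same phase $(d_ad_b^{-1})^{it}$.
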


\begin{proof}
Each entry $u_a^{*}xu_b$ belongs to $A$, since $u_a^{*}=qu_a^{*}$ and
$u_b=u_bq$ by Lemma~\ref{lem:corner-elementary}. Thus $\Xi$ is well
defined, and it is linear because each entry depends linearly on $x$. Using $1=\sum_cu_cu_c^{*}$, we obtain
$$
  \Xi(xy)_{ab}
  =u_a^{*}x\left(\sum_cu_cu_c^{*}\right)yu_b
  =\sum_c\bigl(u_a^{*}xu_c\bigr)\bigl(u_c^{*}yu_b\bigr)
  =\sum_c\Xi(x)_{ac}\Xi(y)_{cb},
$$
so $\Xi$ is multiplicative. Compatibility with the involution follows
immediately, while $\Xi(1)=(u_a^{*}u_b)_{a,b}=(\delta_{ab}q)_{a,b}$ is the unit of
$M_{m+1}(A)$. To see that $\Xi$ is injective, note first that by Lemma~\ref{lem:corner-elementary}(iii),
$$
        x=\sum_{a,b}u_a\left(u_a^{*}xu_b\right)u_b^{*}
         =\sum_{a,b}u_a\,\Xi(x)_{ab}\,u_b^{*},
$$
so that $x$ is completely determined by $\Xi(x)$. Conversely, given
$(y_{ab})\in M_{m+1}(A)$, the element
$$x=\sum_{a,b}u_ay_{ab}u_b^{*}$$ satisfies $\Xi(x)=(y_{ab})$. Hence $\Xi$
is bijective.

Note that if $y\in A$ and $a\ge1$, then $u_a^{*}y=s_a^{*}qy=0$ and
$yu_a=yqs_a=0$ by Lemma~\ref{lem:corner-elementary}(ii), so every entry of
$\Xi(y)$ outside the $(0,0)$ position vanishes; since
$\Xi(y)_{00}=u_0^{*}yu_0=qyq=y$, this gives $\Xi(y)=E_{00}\otimes y$. Moreover
$u_a^{*}p_iu_b=\delta_{ai}\delta_{bi}q$, since $p_i=u_iu_i^{*}$ and
$u_a^{*}u_i=\delta_{ai}q$ by Lemma~\ref{lem:corner-elementary}(iii); hence
$\Xi(p_i)=E_{ii}\otimes q$.

Finally, for \eqref{eq:gamma}, note that $\sigma_t(u_a)=d_a^{it}u_a$, so
$$
  \sigma^{A}_t\left(\Xi(x)_{ab}\right)
  =\sigma_t(u_a^{*}xu_b)
  =d_a^{-it}d_b^{it}u_a^{*}\sigma_t(x)u_b
  =\left(d_ad_b^{-1}\right)^{-it}\Xi\left(\sigma_t(x)\right)_{ab}.
$$
\end{proof}

Note that interchanging the roles of the two families in the previous proposition yields, in the same way, an isomorphism $\Omn\cong M_{n+1}(A)$ carrying $\sigma$ to the
action \eqref{eq:gamma} with the weights $d_0=1$, $d_j=n$.

The matrix decomposition above allows us to reduce the study of KMS states
on $\Omn$ to the corner $A$. The next theorem gives this
reduction explicitly. In particular, every KMS state on $\Omn$
is completely determined by its restriction to $A$, and every KMS state on
$A$ can be uniquely extended to one on $\Omn$.

For $y$ entire analytic for $\sigma^{A}$, the element $E_{ab}\otimes y$ is
entire analytic for $\gamma$, with
\begin{equation}\label{eq:gamma-analytic}
  \gamma_z\left(E_{ab}\otimes y\right)
  =\left(d_ad_b^{-1}\right)^{iz}E_{ab}\otimes\sigma^{A}_z(y),
  \qquad z\in\C.
\end{equation}
We write $\mathcal D$ for the linear span of such elements. Since the
$\sigma^{A}$-analytic elements form a dense $*$-subalgebra of $A$, $\mathcal D$ is a dense,
$\gamma$-invariant $*$-subalgebra of $M_{m+1}(A)$ of entire
analytic elements. Thus, it suffices to verify the KMS condition on
$\mathcal D$.
\begin{theorem}\label{thm:corner-reduction}
Let $m,n\ge1$ and let $\beta\in\mathbb R$. Then
$
  R_\beta(\varphi):=Z_\beta\varphi|_{A}
$
defines an affine, weak-$*$ homeomorphism
$$
  R_\beta\colon K_\beta\left(\Omn,\sigma\right)
  \xrightarrow{\ \cong\ }
  K_\beta\left(A,\sigma^{A}\right),
$$
whose inverse is given by
\begin{equation}\label{eq:reconstruction}
  \psi\longmapsto\varphi_\psi,
  \qquad
  \varphi_\psi(x)=\frac{1}{Z_\beta}
  \left(\psi(qxq)+m^{-\beta}\sum_{i=1}^{m}\psi\left(s_i^{*}xs_i\right)\right),
  \qquad x\in\Omn.
\end{equation}
\end{theorem}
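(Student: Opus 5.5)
Transport everything through the isomorphism $\Xi$ of Proposition~\ref{prop:matrix-picture}: a state $\varphi$ on $\Omn$ is $\KMS_\beta$ for $\sigma$ if and only if $\phi=\varphi\circ\Xi^{-1}$ is $\KMS_\beta$ for $\gamma$ on $M_{m+1}(A)$. The claim then becomes the standard statement that $\KMS_\beta$ states of a matrix algebra with diagonally twisted dynamics are the Gibbs amplifications of $\KMS_\beta$ states of the corner. By the remark preceding the theorem, it suffices to check the KMS identity on $\mathcal D$, that is, on pairs $E_{ab}\otimes x$, $E_{cd}\otimes y$ with $x,y$ entire analytic for $\sigma^A$.

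\textbf{Step 1 (the KMS condition forces the structure).} Let $\phi$ be $\KMS_\beta$. Apply the condition to $E_{a0}\otimes x$ and $E_{0b}\otimes y$, using \eqref{eq:gamma-analytic}. This shows that $\phi(E_{ab}\otimes\cdot)$ vanishes for $a\ne b$: write $E_{ab}\otimes z=(E_{a0}\otimes z)(E_{0b}\otimes q)$ and commute the factors to get a product $E_{0b}E_{a0}=0$. It also gives
\[
\phi(E_{aa}\otimes xy)=d_a^{-\beta}\phi(E_{00}\otimes yx).
\]
Taking $x=y=q$ gives $\phi(E_{aa}\otimes q)=d_a^{-\beta}\phi(E_{00}\otimes q)$. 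Summing, $1=Z_\beta\,\phi(E_{00}\otimes q)$, so $\psi:=Z_\beta\,\phi(E_{00}\otimes\cdot)$ is a state on $A$; note $\phi(E_{00}\otimes q)>0$ automatically. It is $\KMS_\beta$ for $\sigma^A$ because $E_{00}\otimes A$ is $\gamma$-invariant with $\gamma$ restricting to $\sigma^A$. Since $\Xi(y)=E_{00}\otimes y$, we have $\psi=Z_\beta\varphi|_A=R_\beta(\varphi)$.

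Reading off the entries of $\Xi(x)$ then gives
\[
\varphi(x)=\sum_a\phi(E_{aa}\otimes u_a^*xu_a)=\frac1{Z_\beta}\sum_a d_a^{-\beta}\psi(u_a^*xu_a),
\]
which is \eqref{eq:reconstruction}. Hence $R_\beta$ is injective, and the map $\psi\mapsto\varphi_\psi$ is a left inverse of $R_\beta$.

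\textbf{Step 2 (every corner state extends).} Given $\psi\in K_\beta(A,\sigma^A)$, define
\[
\phi\big((x_{ab})\big)=Z_\beta^{-1}\sum_a d_a^{-\beta}\psi(x_{aa}).
\]
This is a state: it is positive because each $x\mapsto x_{aa}$ is positive, and it is normalized by the choice of $Z_\beta$. Verify the KMS identity on $E_{ab}\otimes x$, $E_{cd}\otimes y$. Both sides vanish unless $b=c$ and $a=d$. In that case the left side is $Z_\beta^{-1}d_a^{-\beta}\psi(xy)$. The right side is
\[
(d_ad_b^{-1})^{-\beta}\,Z_\beta^{-1}d_b^{-\beta}\,\psi\big(y\,\sigma^A_{i\beta}(x)\big).
\]
These agree by the corner $\KMS_\beta$ identity and because $d_a^{-\beta}d_b^{\beta}d_b^{-\beta}=d_a^{-\beta}$. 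Transporting back by $\Xi$ gives exactly $\varphi_\psi$, and $R_\beta(\varphi_\psi)=\psi$ since $u_a^*yu_a=0$ for $y\in A$, $a\ge1$. So $R_\beta$ is a bijection with the stated inverse.

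\textbf{Step 3 (topology and affinity).} Both $R_\beta$ and its inverse are visibly affine and weak-$*$ continuous, being finite combinations of evaluations composed with fixed maps. Alternatively, a continuous bijection between the compact Hausdorff simplices $K_\beta$ is automatically a homeomorphism.

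The main obstacle is the bookkeeping in Step 1: getting the sign of the exponent of $d_ad_b^{-1}$ in $\gamma_{i\beta}$ right, and handling the case where $\beta$ makes $Z_\beta$ small. Positivity of $Z_\beta$ always holds, so the real care lies in the off-diagonal vanishing argument, which must use analytic elements only. Since $q$ is fixed by $\sigma^A$, all the elements used above are analytic, so no real difficulty is expected.
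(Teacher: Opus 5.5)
Your proposal is correct and follows essentially the same route as the paper: transport to $M_{m+1}(A)$ via $\Xi$, use the KMS identity on matrix units tensored with $q$ to kill the off-diagonal entries and to get $\phi(E_{aa}\otimes y)=d_a^{-\beta}\phi(E_{00}\otimes y)$, read off the reconstruction formula, and check the KMS identity for the weighted diagonal functional on $\mathcal D$. One small slip: applying KMS to $E_{a0}\otimes x$ and $E_{0a}\otimes y$ actually gives $\phi(E_{aa}\otimes xy)=d_a^{-\beta}\phi(E_{00}\otimes y\,\sigma^A_{i\beta}(x))$, not $d_a^{-\beta}\phi(E_{00}\otimes yx)$; this is harmless, since you only use it with $x=q$, which is $\sigma^A$-fixed.
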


\begin{proof}
We work in $M_{m+1}(A)$ using Proposition~\ref{prop:matrix-picture} and
identifying $y\in A$ with $E_{00}\otimes y$.

Let $\varphi\in K_\beta(\Omn,\sigma)$. If $y,w\in A$ are
entire analytic for $\sigma^{A}$, then they are also entire analytic for
$\sigma$, with $\sigma_z(y)=\sigma^{A}_z(y)$. The KMS identity therefore
gives
$$
  \varphi(yw)=\varphi(w\sigma^{A}_{i\beta}(y)).
$$
Since such elements are dense in $A$, the positive functional
$\varphi|_A$ satisfies the $\KMS_\beta$ condition for $\sigma^{A}$. By
Lemma~\ref{lem:basic-kms}, applied to $v=s_i$, we have
$\varphi(p_i)=m^{-\beta}\varphi(q)$. Thus
$\varphi(p)=m^{1-\beta}\varphi(q)$, and so
$
  1=\varphi(p)+\varphi(q)=Z_\beta\varphi(q).
$
It follows that $R_\beta(\varphi)(q)=Z_\beta\varphi(q)=1$, so
$R_\beta(\varphi)$ is a state on $A$ and belongs to
$K_\beta(A,\sigma^{A})$. Thus $R_\beta$ is well defined.

We next show that a KMS state is determined by its restriction to $A$.
Let $y\in A$ be entire analytic. For $a\ne b$, take
$X=E_{aa}\otimes q$ and $Y=E_{ab}\otimes y$. Since $X$ is fixed by
$\gamma$, the KMS identity gives
$$
  \varphi\left(E_{ab}\otimes y\right)
  =\varphi(XY)
  =\varphi\left(Y\gamma_{i\beta}(X)\right)
  =\varphi(YX)=0,
$$
because $(E_{ab}\otimes y)(E_{aa}\otimes q)=0$ when $b\ne a$. For the diagonal entries, take $X=E_{a0}\otimes q$ and
$Y=E_{0a}\otimes y$. By \eqref{eq:gamma-analytic},
$\gamma_z(X)=d_a^{iz}X$, so $\gamma_{i\beta}(X)=d_a^{-\beta}X$.
Since $XY=E_{aa}\otimes y$ and $YX=E_{00}\otimes y$, we obtain
$$
  \varphi\left(E_{aa}\otimes y\right)
  =\varphi\left(Y\gamma_{i\beta}(X)\right)
  =d_a^{-\beta}\varphi\left(E_{00}\otimes y\right).
$$
By density, these identities hold for all $y\in A$. Consequently, for
every $x\in\Omn$,
\begin{equation}\tag{$\dagger$}\label{eq:reconstruction-matrix}
  \varphi(x)
  =\sum_{a,b}\varphi\left(E_{ab}\otimes\Xi(x)_{ab}\right)
  =\sum_{a=0}^{m}d_a^{-\beta}
     \varphi\left(E_{00}\otimes u_a^{*}xu_a\right)
  =\varphi(qxq)+m^{-\beta}\sum_{i=1}^{m}
     \varphi\left(s_i^{*}xs_i\right).
\end{equation}
Thus $\varphi$ is completely determined by $\varphi|_A$, so $R_\beta$ is
injective. Moreover, \eqref{eq:reconstruction-matrix} gives exactly
\eqref{eq:reconstruction} when $\psi=R_\beta(\varphi)$.

It remains to show that every $\psi\in K_\beta(A,\sigma^{A})$ arises in
this way. Define a functional on $M_{m+1}(A)$ by
$$
  \varphi\left((x_{ab})_{ab}\right)
  :=\frac{1}{Z_\beta}\sum_{a=0}^{m}d_a^{-\beta}\psi(x_{aa}).
$$
This functional is positive, since
$$
  \varphi(x^{*}x)
  =Z_\beta^{-1}\sum_a d_a^{-\beta}
    \psi\left(\sum_c x_{ca}^{*}x_{ca}\right)\ge0,
$$
and it is unital because
$$
  \varphi(1)=Z_\beta^{-1}\sum_a d_a^{-\beta}\psi(q)=1.
$$
To check the KMS condition, it is enough by linearity and density to
consider $X=E_{ab}\otimes y$ and $Y=E_{cd}\otimes w$ in $\mathcal D$.
Since
$XY=\delta_{bc}E_{ad}\otimes yw$, we have
$$
  \varphi(XY)
  =\delta_{bc}\delta_{ad}
    \frac{d_a^{-\beta}}{Z_\beta}\psi(yw).
$$
On the other hand, \eqref{eq:gamma-analytic} gives
$$
  \gamma_{i\beta}(X)
  =\left(d_ad_b^{-1}\right)^{-\beta}
    E_{ab}\otimes\sigma^{A}_{i\beta}(y),
$$
and hence
$$
  Y\gamma_{i\beta}(X)
  =\left(d_ad_b^{-1}\right)^{-\beta}
    \delta_{da}E_{cb}\otimes
    w\sigma^{A}_{i\beta}(y).
$$
Therefore
$$
  \varphi\left(Y\gamma_{i\beta}(X)\right)
  =\delta_{da}\delta_{cb}
    \left(d_ad_b^{-1}\right)^{-\beta}
    \frac{d_c^{-\beta}}{Z_\beta}
    \psi\left(w\sigma^{A}_{i\beta}(y)\right).
$$
Both expressions vanish unless $b=c$ and $a=d$. In that case, the
scalar factors agree because
$(d_ad_b^{-1})^{-\beta}d_b^{-\beta}=d_a^{-\beta}$, while
$$
  \psi(yw)=\psi(w\sigma^{A}_{i\beta}(y))
$$
follows from the $\KMS_\beta$ condition for $\psi$. Thus
$\varphi\in K_\beta(\Omn,\sigma)$.

Finally,
$\varphi(E_{00}\otimes y)=Z_\beta^{-1}\psi(y)$, so
$R_\beta(\varphi)=\psi$. Translating the definition of $\varphi$ through
$\Xi$ gives \eqref{eq:reconstruction}. Notice also that
$s_i^{*}ys_i=(s_i^{*}q)y(qs_i)=0$ for $y\in A$, so the reconstruction
formula indeed restricts to $Z_\beta^{-1}\psi$ on $A$.

Both $R_\beta$ and the reconstruction map are affine and weak-$*$
continuous, since they are given by evaluation at fixed elements with
coefficients independent of the state. Since they are mutually inverse,
this proves the result.
\end{proof}

\subsection{KMS states in the diagonal case}
\label{ss:diagonal}

Let $m=n=r\ge2$. Then the normalized dynamics is
$\sigma_t(s_i)=r^{it}s_i$ and $\sigma_t(t_j)=r^{it}t_j$, with
$\varepsilon=0$. There are two important differences in this case. First,
the dynamics is inner and has only two spectral levels in the elementary
block decomposition. Second, the dynamics on the corner is trivial, so the
KMS condition in $A$ is just the trace identity for every inverse
temperature. When $m\ne n$, the situation is quite different, and the dynamics is not even approximately inner, as we will see in Corollary~\ref{cor:corner-ground-cases}.

Throughout this subsection we write
$$
h=(\log r)p ,
\qquad
e^{zh}=r^{z}p+q \quad (z\in\C),
$$
an entire family of invertible elements of $\mathcal O_{r,r}$.

\begin{proposition}\label{prop:inner}
If $m=n=r$, the normalized dynamics is implemented by
$u_t=e^{ith}=r^{it}p+q$. In particular every element of
$\mathcal O_{r,r}$ is entire analytic, with $\sigma_z(a)=e^{izh}ae^{-izh}$
for $z\in\C$, and the restriction of $\sigma$ to the two diagonal corners
$p\mathcal O_{r,r}p$ and $q\mathcal O_{r,r}q$ is trivial.
\end{proposition}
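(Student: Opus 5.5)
The plan is to check that conjugation by the unitary $u_t=r^{it}p+q$ agrees with $\sigma_t$ on the generators, and then let the universal property and a density argument do the rest. First, $u_t$ is unitary: $p$ and $q$ are complementary orthogonal projections by \eqref{rel:pq}, and $|r^{it}|=1$. Next, Lemma~\ref{lem:corner-elementary}(i) gives $ps_i=s_i$, $qs_i=0$ and $s_ip=0$ (since $s_i=s_iq$ and $qp=0$), together with $s_iq=s_i$. Hence
$$
u_ts_iu_t^{*}=(r^{it}p+q)\,s_i\,(r^{-it}p+q)=r^{it}s_i ,
$$
and in exactly the same way $u_tt_ju_t^{*}=r^{it}t_j$. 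Now $\Ad u_t$ and $\sigma_t$ are unital $*$-homomorphisms that agree on the generators, so they coincide. One can either quote the uniqueness in Proposition~\ref{prop:universal} or simply use that both are continuous and agree on the dense $*$-algebra generated by the $s_i,t_j$.

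For analyticity, I would define $\sigma_z(a):=e^{izh}ae^{-izh}=(r^{iz}p+q)\,a\,(r^{-iz}p+q)$ for $z\in\C$. Expanding the product gives
$$
\sigma_z(a)=r^{iz}\cdot r^{-iz}\,pap+qaq+r^{iz}paq+r^{-iz}qap=pap+qaq+r^{iz}paq+r^{-iz}qap ,
$$
where the first two terms have coefficient $1$. This is an entire $\mathcal O_{r,r}$-valued function of $z$, and for real $z=t$ it agrees with $\sigma_t(a)$ by the first step. So every $a$ is entire analytic, and by uniqueness of analytic continuation its extension is the displayed formula. I would also note that $e^{izh}$ and $e^{-izh}$ are mutually inverse, since $pq=0$.

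For the corners, take $a\in q\mathcal O_{r,r}q$. Then $u_taq=aq$ and $qu_t^{*}=q$, so $\sigma_t(a)=u_t(qaq)u_t^{*}=qaq=a$. Similarly, for $a\in p\mathcal O_{r,r}p$ we have $u_tp=r^{it}p$, which gives $\sigma_t(a)=r^{it}r^{-it}a=a$. The triviality on the $q$-corner is also consistent with Proposition~\ref{prop:corner-generators}, since $\varepsilon=\log(r/r)=0$ there.

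None of these steps is a real obstacle. The only point needing care is the bookkeeping of where $p$ and $q$ are absorbed by the generators, namely $s_i=ps_iq$ and $t_j=pt_jq$, which is exactly Lemma~\ref{lem:corner-elementary}(i).
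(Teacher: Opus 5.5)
Your proof is correct and takes essentially the same approach as the paper. You check conjugation by $u_t=r^{it}p+q$ on the generators via $s_i=ps_iq$, $t_j=pt_jq$, get entire analyticity from the bounded $h$ (your expansion $\sigma_z(a)=pap+qaq+r^{iz}paq+r^{-iz}qap$ just makes this explicit), and read off triviality on the corners from $u_tp=r^{it}p$ and $u_tq=q$.
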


\begin{proof}
By Lemma~\ref{lem:corner-elementary}(i), $s_i=ps_iq$ and $t_i=pt_iq$. Thus
$u_ts_iu_t^*=r^{it}s_i$ and $u_tt_iu_t^*=r^{it}t_i$. The generators
therefore transform exactly as under $\sigma_t$, so
$\sigma_t=\Ad(u_t)$. Since $h$ is bounded, $z\mapsto e^{izh}ae^{-izh}$ is
an entire function agreeing with $t\mapsto\sigma_t(a)$ on the real axis.
If $a\in p\mathcal O_{r,r}p$, then $u_tau_t^*=(r^{it}p)a(r^{-it}p)=a$; the
same calculation on the $q$-corner is even simpler, since $u_tq=q$.
\end{proof}

\begin{corollary}\label{cor:corner-diagonal}
Let $r\ge2$. Then $\sigma^{A}$ is trivial, so that
$K_\beta(A,\sigma^{A})=\operatorname{Tr}(A)$ for every $\beta$, and
Theorem~\ref{thm:corner-reduction} gives affine homeomorphisms
$$
  K_\beta\left(\mathcal O_{r,r},\sigma\right)\cong\operatorname{Tr}(A)
  \cong\operatorname{Tr}\left(M_{r+1}(A)\right)
  =\operatorname{Tr}\left(\mathcal O_{r,r}\right),
  \qquad\beta\in\mathbb R .
$$
In particular the $\KMS_\beta$ simplex is, up to affine homeomorphism,
independent of $\beta$, and the problem of determining the KMS states on
$\mathcal O_{r,r}$ reduces to the problem of finding traces.
\end{corollary}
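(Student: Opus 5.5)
The first claim, that $\sigma^{A}$ is trivial, I would read off from Proposition~\ref{prop:corner-generators}. With $m=n=r$ we have $\varepsilon=\log(r/r)=0$, so \eqref{eq:corner-dynamics} gives $\sigma^{A}_t(v_{ij})=v_{ij}$ for all generators. Each $\sigma^A_t$ is a $*$-automorphism, and the $v_{ij}$ generate $A$, so $\sigma^A_t=\id_A$ for every $t$. Alternatively, Proposition~\ref{prop:inner} already says that $\sigma$ is trivial on the corner $q\mathcal O_{r,r}q$. Once the action is trivial, every element of $A$ is entire analytic with $\sigma^A_z(y)=y$. The $\KMS_\beta$ identity $\psi(yw)=\psi(w\sigma^A_{i\beta}(y))$ then reduces to $\psi(yw)=\psi(wy)$, so for every $\beta\in\mathbb R$ we get $K_\beta(A,\sigma^A)=\operatorname{Tr}(A)$ as sets and as weak-$*$ compact convex sets.

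The first homeomorphism $K_\beta(\mathcal O_{r,r},\sigma)\cong\operatorname{Tr}(A)$ then comes directly from Theorem~\ref{thm:corner-reduction}, which supplies the affine weak-$*$ homeomorphism $R_\beta$.

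For $\operatorname{Tr}(A)\cong\operatorname{Tr}(M_{r+1}(A))$ I would use the standard matrix-amplification correspondence. A tracial state $\tau$ on $A$ gives the tracial state $\frac{1}{r+1}(\Tr\otimes\tau)$ on $M_{r+1}(A)$. Conversely, a tracial state $T$ on $M_{r+1}(A)$ restricts to $(r+1)T(E_{00}\otimes\cdot)$ on $A$. To see these are mutually inverse, I would use the trace property with the matrix units:
\[
T(E_{ab}\otimes y)=T\bigl((E_{a0}\otimes q)(E_{0b}\otimes y)\bigr)=T\bigl((E_{0b}\otimes y)(E_{a0}\otimes q)\bigr)=\delta_{ab}T(E_{00}\otimes y).
\]
This shows $T$ is determined by its $(0,0)$ corner. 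It also shows that restricting $T$ to $A$ gives a trace: for $y,w\in A$ the products $(E_{00}\otimes y)(E_{00}\otimes w)$ and $(E_{00}\otimes w)(E_{00}\otimes y)$ live in the corner, so $T(E_{00}\otimes yw)=T(E_{00}\otimes wy)$. This is the same computation as in the proof of Theorem~\ref{thm:corner-reduction} with all weights $d_a=1$, that is, the $\beta=0$ case. Indeed, the cleanest route is to invoke Theorem~\ref{thm:corner-reduction} at $\beta=0$ together with Proposition~\ref{prop:matrix-picture}: $K_0$ is the trace simplex for any dynamics, so $\operatorname{Tr}(\mathcal O_{r,r})=K_0(\mathcal O_{r,r},\sigma)\cong K_0(A,\sigma^A)=\operatorname{Tr}(A)$. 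The last equality $\operatorname{Tr}(M_{r+1}(A))=\operatorname{Tr}(\mathcal O_{r,r})$ is transport of structure along the $*$-isomorphism $\Xi$ of Proposition~\ref{prop:matrix-picture}.

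Both maps are affine and weak-$*$ continuous, since they are evaluations at fixed elements, and compositions of affine homeomorphisms are affine homeomorphisms. This gives the displayed chain, and the independence of $\beta$ is immediate because the middle term does not involve $\beta$. I see no real obstacle. The only point needing care is the convention that $\KMS_0$ means the trace identity, which is fixed in Section~\ref{sec:corner}, and the observation that $Z_0=1+r$ is exactly the normalization $r+1$ used in the amplification.
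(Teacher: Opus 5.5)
Your proposal is correct and follows the paper's proof: $\varepsilon=0$ makes $\sigma^A$ trivial by Proposition~\ref{prop:corner-generators}, so $K_\beta(A,\sigma^A)=\operatorname{Tr}(A)$. The chain then uses $R_\beta$, the canonical amplification $\tau_A\mapsto(r+1)^{-1}\sum_a\tau_A(x_{aa})$, and transport along $\Xi$. Your matrix-unit check of the amplification, and your remark that it is Theorem~\ref{thm:corner-reduction} at $\beta=0$ with $Z_0=r+1$, simply spell out what the paper calls the canonical identification.
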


\begin{proof}
Here $\varepsilon=0$, so $\sigma^{A}$ is trivial by
Proposition~\ref{prop:corner-generators}, and for a trivial dynamics the KMS
condition is the trace identity at every $\beta$. Combining this with
Theorem~\ref{thm:corner-reduction} and
Proposition~\ref{prop:matrix-picture} gives, for every $\beta\in\R$,
$$
        K_\beta\left(\mathcal O_{r,r},\sigma\right)
        \xrightarrow[\ \cong\ ]{R_\beta }
        K_\beta\left(A,\sigma^{A}\right)
        =\Tr(A)
        \xrightarrow[\ \cong\ ]{  }
        \Tr\left(M_{r+1}(A)\right)
        \xrightarrow[\ \cong\ ]{\ \widetilde\tau\mapsto\widetilde\tau\circ\Xi \ }
        \Tr\left(\mathcal O_{r,r}\right),
$$
the middle homeomorphism being the canonical identification $\tau_A\mapsto(r+1)^{-1}\sum_a\tau_A(x_{aa})$.
\end{proof}

The composite bijection of Corollary~\ref{cor:corner-diagonal} is the
usual Gibbs correspondence between traces and KMS states, in its
bounded-inner form; compare \cite[Section~5.3]{BR2}. We record it
explicitly, since the normalization matters in what follows.

\begin{corollary}\label{cor:gibbs}
Let $r\ge2$ and fix $\beta\in\mathbb R$. The map
$$
\tau\longmapsto\varphi_\tau,
\qquad
\varphi_\tau(x)=\frac{r+1}{1+r^{1-\beta}}
   \tau\left(e^{-\beta h}x\right),
\qquad x\in\mathcal O_{r,r},
$$
is an affine homeomorphism from $\operatorname{Tr}(\mathcal O_{r,r})$ onto
$K_\beta(\mathcal O_{r,r},\sigma)$.
\end{corollary}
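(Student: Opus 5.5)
The plan is to obtain the map as the composite of the identifications in Corollary~\ref{cor:corner-diagonal}, and then check that the formula for $\varphi_\tau$ is what this composite produces. Equivalently, one can verify directly that $\varphi_\tau$ is a $\KMS_\beta$ state and that the map is a bijection. I would take the direct route, using Proposition~\ref{prop:inner}, since it also pins down the normalization constant.

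\textbf{Step 1: $\varphi_\tau$ is a $\KMS_\beta$ state.}
Fix $\tau\in\operatorname{Tr}(\mathcal O_{r,r})$.

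Positivity: $e^{-\beta h}=r^{-\beta}p+q$ is positive, invertible and central in neither corner-mixing sense, but it commutes with $p$ and $q$. Since $\tau$ is a trace,
\[
\tau(e^{-\beta h}x^*x)=\tau(e^{-\beta h/2}x^*xe^{-\beta h/2})\ge0 .
\]

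KMS condition: every element is entire analytic with $\sigma_{i\beta}(a)=e^{-\beta h}ae^{\beta h}$. Hence, using the trace property,
\[
\tau\bigl(e^{-\beta h}b\,e^{-\beta h}ae^{\beta h}\bigr)=\tau\bigl(e^{-\beta h}ab\bigr).
\]
This is exactly the identity $\varphi_\tau(ab)=\varphi_\tau(b\,\sigma_{i\beta}(a))$.

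Normalization: $\varphi_\tau(1)$ is proportional to $r^{-\beta}\tau(p)+\tau(q)$. So I need the values of $\tau(p)$ and $\tau(q)$ for a tracial state. By the trace property, $\tau(s_is_i^*)=\tau(s_i^*s_i)=\tau(q)$. Hence $\tau(p)=r\tau(q)$, and together with $\tau(p)+\tau(q)=1$ this gives $\tau(q)=1/(r+1)$. Therefore
\[
\tau(e^{-\beta h})=\frac{r^{1-\beta}+1}{r+1},
\]
which is exactly why the prefactor $\frac{r+1}{1+r^{1-\beta}}$ makes $\varphi_\tau$ unital.

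\textbf{Step 2: the map is a bijection.}
I would write down the inverse
\[
\varphi\longmapsto\tau_\varphi(x)=\frac{1+r^{1-\beta}}{r+1}\,\varphi\bigl(e^{\beta h}x\bigr).
\]
For a $\KMS_\beta$ state $\varphi$, one has $\varphi(ab)=\varphi(b\,e^{-\beta h}ae^{\beta h})$. Taking $a=e^{\beta h}x$ then gives
\[
\varphi(e^{\beta h}xy)=\varphi(e^{\beta h}yx),
\]
so $\tau_\varphi$ is tracial. Positivity of $\tau_\varphi$ follows from the same identity: it lets one rewrite $\varphi(e^{\beta h}x^*x)$ as $\varphi(e^{\beta h/2}x^*xe^{\beta h/2})$, using analyticity with $e^{\beta h/2}$. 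Unitality of $\tau_\varphi$ comes from $\varphi(p)=r^{1-\beta}\varphi(q)$, as in the proof of Theorem~\ref{thm:corner-reduction}. The two maps are evidently mutually inverse. Both are affine and weak-$*$ continuous, being evaluations at fixed elements with fixed scalars, so the map is a homeomorphism between compact sets.

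\textbf{Main obstacle.}
The delicate point is the positivity of $\tau_\varphi$, where the KMS identity has to be used with the non-unitary analytic element $e^{\beta h/2}$. Concretely, one needs $\varphi(e^{\beta h}x^*x)=\varphi(e^{\beta h/2}x^*xe^{\beta h/2})$, which should follow from the KMS identity applied to $a=e^{\beta h/2}$ together with $\sigma_{i\beta}(e^{\beta h/2})=e^{\beta h/2}$.

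An alternative that avoids this computation is to observe that this inverse coincides with the composite of Corollary~\ref{cor:corner-diagonal}. Checking that agreement on the diagonal blocks $E_{aa}\otimes y$ is immediate from \eqref{eq:reconstruction}.
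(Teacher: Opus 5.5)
Your proof is correct, but it does not follow the paper's route.

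\textbf{The paper's route.} The paper verifies nothing directly. It takes the composite
\[
R_\beta^{-1}\circ\bigl(\tau\mapsto(r+1)\tau|_A\bigr)
\]
from Corollary~\ref{cor:corner-diagonal}, which is already known to be an affine homeomorphism, and only computes its formula. It substitutes $\psi=(r+1)\tau|_A$ into \eqref{eq:reconstruction} and uses $\tau(s_i^*xs_i)=\tau(xp_i)$. This turns
\[
\psi(qxq)+r^{-\beta}\sum_i\psi(s_i^*xs_i)
\quad\text{into}\quad
(r+1)\,\tau\bigl(x(q+r^{-\beta}p)\bigr).
\]
Positivity, the $\KMS_\beta$ identity, traciality of the inverse and bijectivity are then all inherited from Theorem~\ref{thm:corner-reduction}. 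One also learns that the Gibbs map equals the canonical corner identification, which is what Theorem~\ref{thm:B} asserts.

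\textbf{Your route.} Yours is the standard bounded-inner Gibbs argument. It needs only Proposition~\ref{prop:inner} and the two normalizations $\tau(q)=1/(r+1)$ and $\varphi(p)=r^{1-\beta}\varphi(q)$. It is independent of the matrix picture, and it shows clearly why the map is affine: $\tau(e^{-\beta h})$ takes the same value on every trace. The cost is that every property must be checked by hand, and the map is not by itself related to $R_\beta$.

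\textbf{Two small points.}
\begin{itemize}
\item Traciality step. Substituting $a=e^{\beta h}x$, $b=y$ gives $\varphi(e^{\beta h}xy)=\varphi(yx\,e^{\beta h})$, not yet $\varphi(e^{\beta h}yx)$. You need one more application of the $\KMS_\beta$ identity, with $a=e^{\beta h}$ and $b=yx$. This is allowed because $\sigma_{i\beta}(e^{\beta h})=e^{\beta h}$.
\item Positivity. The same fact, that $e^{\beta h/2}$ is $\sigma$-fixed and hence in the centralizer of $\varphi$, settles positivity of $\tau_\varphi$ immediately. So the step you flag as the main obstacle is routine.
\end{itemize}
Using the $\KMS_\beta$ identity for all $a,b$ rather than only on a dense subalgebra is harmless. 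The map $a\mapsto\sigma_{i\beta}(a)=e^{-\beta h}ae^{\beta h}$ is norm-continuous, so the identity extends by continuity.
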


\begin{proof}
For a tracial state $\tau$ we have $\tau(p_i)=\tau(s_i^*s_i)=\tau(q)$ for
each $i$, so that $\tau(p)=r\tau(q)$ and, by $p+q=1$,
$\tau(q)=1/(r+1)$. Hence $\tau\mapsto\psi:=(r+1)\tau|_A$ is the affine
homeomorphism $\operatorname{Tr}(\mathcal O_{r,r})\to\operatorname{Tr}(A)$
of Corollary~\ref{cor:corner-diagonal}. Substituting into
\eqref{eq:reconstruction} and using traciality in the form
$\tau(s_i^{*}xs_i)=\tau(xp_i)$, we obtain
$$
\varphi_\psi(x)
=\frac{r+1}{1+r^{1-\beta}}
 \tau\left(x(q+r^{-\beta}p)\right)
=\frac{r+1}{1+r^{1-\beta}}\tau\left(e^{-\beta h}x\right),
$$
since $e^{-\beta h}=r^{-\beta}p+q$ and $\tau$ is a trace.
\end{proof}

Ara, Exel and Katsura observed that the full algebra $\mathcal O_{r,r}$
admits many finite-dimensional representations; see
\cite[Introduction]{AEK2013}. We now give an explicit family,
parametrized by finite-dimensional unitary tuples, which is particularly
convenient for constructing and distinguishing traces and therefore KMS
states.

\begin{theorem}\label{thm:findim}
Fix $r\ge2$, an integer $d\ge1$, and unitaries $U_1,\dots,U_r\in U(d)$.
Let
$$
K=\C^d,\quad H_p=K^{\oplus r},\quad H_q=K,\quad H=H_p\oplus H_q .
$$
Let $V_i:H_q\to H_p$ be the isometry onto the $i$th copy of $K$, and
define $S_i=V_i$ and $T_i=V_iU_i$. Then $\{S_i,T_i:1\le i\le r\}$ is a
tame family satisfying the relations of $\mathcal O_{r,r}$. Consequently
it defines a representation $\pi_{d,U}:\mathcal O_{r,r}\to B(H)$.
\end{theorem}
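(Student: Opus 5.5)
The plan is to verify the relations \eqref{rel:sorth}--\eqref{rel:pq} directly in $B(H)$ and then check tameness by reducing every word to a normal form, just as in the proof of Proposition~\ref{prop:Auniversal}. Afterwards Proposition~\ref{prop:universal} supplies $\pi_{d,U}$.

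\emph{Relations.} Let $P$ and $Q$ be the orthogonal projections onto $H_p$ and $H_q$. Then $PQ=0$ and $P+Q=1$. The $V_i$ are isometries from $H_q$ onto mutually orthogonal summands of $H_p$, so $V_i^*V_k=\delta_{ik}Q$ and $\sum_iV_iV_i^*=P$. This gives \eqref{rel:sorth}, the $S$-part of \eqref{rel:common-domain} and the $S$-part of \eqref{rel:common-range}. Since each $U_i$ is unitary on $K=H_q$, we get $T_i^*T_k=U_i^*V_i^*V_kU_k=\delta_{ik}U_i^*U_i=\delta_{ik}Q$ and $T_iT_i^*=V_iU_iU_i^*V_i^*=V_iV_i^*$. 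Hence $\sum_iT_iT_i^*=P$, which completes \eqref{rel:torth}--\eqref{rel:pq}.

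\emph{Tameness.} The main step is tameness, and I would handle it by the matrix-unit argument already used in Proposition~\ref{prop:Auniversal}. Identify $H=K^{\oplus(r+1)}$ with index $0$ for $H_q$, so that $B(H)=M_{r+1}(M_d)$. Then $S_i=E_{i0}\otimes 1$ and $T_i=E_{i0}\otimes U_i$. Consequently $S_i^*T_j=\delta_{ij}E_{00}\otimes U_i$, $T_j^*S_i=\delta_{ij}E_{00}\otimes U_i^*$, and products of two unstarred or two starred letters vanish. A nonzero word therefore alternates, and grouping its letters into starred--unstarred pairs puts it in the form $XVY^*$ with $V=E_{00}\otimes u$, where $u$ is a word in $U_i^{\pm1}$ and hence unitary. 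Here $X,Y\in\{Q,S_i,T_i\}$, so $X^*X=Y^*Y=Q$. The computation from the proof of Proposition~\ref{prop:Auniversal} then shows that $(XVY^*)^*(XVY^*)=Y(V^*V)Y^*=YQY^*$ is a projection. So every word is a partial isometry, and this step is in fact easier than in that proposition because $V$ is a unitary in the corner.

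Alternatively, and more quickly, the family $w_{ij}=\delta_{ij}U_i$ is a tame family in $M_d$ with unitary entries, since every word in the $w_{ij}$ and their adjoints is either $0$ or unitary. It satisfies $\sum_jw_{ij}w_{ij}^*=1$ and $\sum_iw_{ij}^*w_{ij}=1$. The construction in the proof of Proposition~\ref{prop:Auniversal} with $B=M_d$ then produces exactly $S_i=E_{i0}\otimes1$ and $T_j=\sum_iE_{i0}\otimes w_{ij}=E_{j0}\otimes U_j$, and that proof already establishes both the relations and tameness. Either way, Proposition~\ref{prop:universal} gives the unital representation $\pi_{d,U}$ with $\pi_{d,U}(s_i)=S_i$ and $\pi_{d,U}(t_i)=T_i$.
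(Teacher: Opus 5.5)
Your proof is correct and is essentially the paper's. The paper's set $\mathcal W=\{0\}\cup\Gamma\cup V_i\Gamma\cup\Gamma V_j^*\cup V_i\Gamma V_j^*$ is exactly your normal form $XVY^*$ with a unitary middle factor (after absorbing the $U_i$ in $T_i=V_iU_i$ into that factor), which the paper phrases as closure of $\mathcal W$ under products and adjoints rather than as a regrouping of letters. Your alternative route, running the construction in the proof of Proposition~\ref{prop:Auniversal} with $B=M_d$ and $w_{ij}=\delta_{ij}U_i$, is also valid, and it is the same device the paper itself uses later in Proposition~\ref{prop:free-group-traces}.
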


\begin{proof}
Let $p_i$ be the projection onto the $i$th copy of $K$ inside $H_p$, put
$p=\sum_ip_i$, and let $q$ be the projection onto $H_q$. Since
$V_i^*V_k=\delta_{ik}1_K$,
$$
S_i^*S_k=\delta_{ik}q,
\qquad
T_i^*T_k=U_i^*V_i^*V_kU_k=\delta_{ik}q,
$$
and $\sum_iS_iS_i^*=p=\sum_iV_iU_iU_i^*V_i^*=\sum_iT_iT_i^*$, with
$p+q=1_H$ and $pq=0$. Thus the defining relations hold.

For tameness, regard each $V_i$ as an operator on $H$, vanishing on $H_p$,
identify operators on $H_q=K$ with their extensions by zero, and let
$\Gamma=\langle U_1,\dots,U_r\rangle\subseteq U(K)$. Put
$$
\mathcal W=\{0\}\cup\Gamma\cup V_i\Gamma\cup\Gamma V_j^*\cup V_i\Gamma V_j^*
\qquad(1\le i,j\le r),
$$
which contains $S_i=V_i$, $T_i=V_iU_i$ and their adjoints. Each element of
$\Gamma$ is a unitary supported on $H_q$ and $V_i^*V_j=\delta_{ij}q$, so
any product of two elements of $\mathcal W$ either vanishes, for lack of a
matching source and range, or absorbs the middle
$V_j^*V_k=\delta_{jk}q$ and is again of one of the four displayed forms;
closure under adjoints is immediate. Finally each nonzero element of
$\mathcal W$ is a partial isometry, with
$$
(V_igV_j^*)^*(V_igV_j^*)=p_j,
\qquad
(V_igV_j^*)(V_igV_j^*)^*=p_i,
$$
and the analogous identities for $g$, $V_ig$ and $gV_j^*$, obtained by
reading $V_i$ or $V_j^*$ as absent. Hence every word in the generators and
their adjoints lies in $\mathcal W$ and is a partial isometry, so
$\{S_i,T_i\}$ is tame and the universal property of $\mathcal O_{r,r}$
yields $\pi_{d,U}$.
\end{proof}

\begin{remark}\label{rem:findim-vs-reduced}
Note that the representations in Theorem~\ref{thm:findim} concern the full
tame algebra $\mathcal O_{r,r}$. They do not contradict
\cite[Theorem~9.5]{AEK2013}, which proves that the reduced algebra
$\Omn^{r}$ admits no nonzero finite-dimensional
representations when $m,n\ge2$.
\end{remark}

\begin{corollary}\label{cor:findimtraces}
Let $r\ge 2$. With the notation of Theorem~\ref{thm:findim}, the state
$$
\tau_{d,U}=\frac{1}{(r+1)d}\mathrm{Tr}\circ\pi_{d,U}
$$
is a tracial state on $\mathcal O_{r,r}$, and for every
$\beta\in\mathbb R$ the corresponding $\KMS_\beta$ state of
Corollary~\ref{cor:gibbs} is the Gibbs state
$$
\varphi_{\beta,d,U}(x)
 =\frac{\mathrm{Tr}(e^{-\beta H_{d}}\pi_{d,U}(x))}
        {\mathrm{Tr}(e^{-\beta H_{d}})},
\qquad
H_d=(\log r)\pi_{d,U}(p),
$$
with $\mathrm{Tr}(e^{-\beta H_d})=d(1+r^{1-\beta})$.
\end{corollary}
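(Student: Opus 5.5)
The plan is to verify the two claims of Corollary~\ref{cor:findimtraces} directly from the representation $\pi_{d,U}$ of Theorem~\ref{thm:findim}, and then match the resulting Gibbs functional with the formula of Corollary~\ref{cor:gibbs}.

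\textbf{Step 1: $\tau_{d,U}$ is a tracial state.} Since $\mathrm{Tr}$ is a positive trace on $B(H)$ and $\pi_{d,U}$ is a unital $*$-homomorphism, the functional $\mathrm{Tr}\circ\pi_{d,U}$ is positive and tracial. Its value at $1$ is $\dim H=rd+d=(r+1)d$, so the normalization by $(r+1)d$ gives a tracial state.

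\textbf{Step 2: compute $e^{-\beta H_d}$ and its trace.} Put $P=\pi_{d,U}(p)$, the projection onto $H_p$, and $Q=1-P$. Then
\[
e^{-\beta H_d}=r^{-\beta}P+Q=\pi_{d,U}\bigl(e^{-\beta h}\bigr),
\]
with $h=(\log r)p$ as in Section~\ref{ss:diagonal}. Since $\dim H_p=rd$ and $\dim H_q=d$, we get $\mathrm{Tr}(e^{-\beta H_d})=r^{-\beta}rd+d=d(1+r^{1-\beta})$.

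\textbf{Step 3: identify the Gibbs state.} By Corollary~\ref{cor:gibbs}, the $\KMS_\beta$ state attached to $\tau=\tau_{d,U}$ is
\[
\varphi_\tau(x)=\frac{r+1}{1+r^{1-\beta}}\,\frac{1}{(r+1)d}\,\mathrm{Tr}\bigl(\pi_{d,U}(e^{-\beta h}x)\bigr)
=\frac{\mathrm{Tr}\bigl(e^{-\beta H_d}\pi_{d,U}(x)\bigr)}{d(1+r^{1-\beta})}.
\]
Substituting the value of the denominator from Step~2 yields exactly the displayed Gibbs formula.

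There is no real obstacle here. The only point needing care is that the constant in Corollary~\ref{cor:gibbs} combines with the factor $\frac1{(r+1)d}$ so that the normalizations match.

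As an independent sanity check, not needed once Corollary~\ref{cor:gibbs} is invoked, the $\KMS_\beta$ property can be seen directly. By Proposition~\ref{prop:inner}, $\pi_{d,U}\circ\sigma_t=\Ad(e^{itH_d})\circ\pi_{d,U}$, so the state is the standard finite-dimensional Gibbs state, which is $\KMS_\beta$.
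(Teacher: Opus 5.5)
Your proposal is correct and follows essentially the same route as the paper: traciality and normalization of $\tau_{d,U}$ from $\mathrm{Tr}(1_H)=(r+1)d$, the computation $\pi_{d,U}(e^{-\beta h})=e^{-\beta H_d}$ with trace $d(1+r^{1-\beta})$, and substitution into Corollary~\ref{cor:gibbs}. Your closing sanity check via Proposition~\ref{prop:inner} is an optional extra not used in the paper's argument.
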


\begin{proof}
$\mathrm{Tr}$ is a trace on the finite-dimensional algebra $B(H)$ and
$\pi_{d,U}$ is a $*$-homomorphism, so $\tau_{d,U}$ is tracial. Moreover
$\tau_{d,U}(1)=\mathrm{Tr}(1_H)/((r+1)d)=1$. Since
$\pi_{d,U}(e^{-\beta h})=e^{-\beta H_d}$ and
$\mathrm{Tr}(e^{-\beta H_d})=d+r^{-\beta}rd=d(1+r^{1-\beta})$,
Corollary~\ref{cor:gibbs} gives
$$
\varphi_{\tau_{d,U}}(x)
=\frac{r+1}{1+r^{1-\beta}}\cdot
 \frac{\mathrm{Tr}(e^{-\beta H_d}\pi_{d,U}(x))}{(r+1)d}
=\frac{\mathrm{Tr}(e^{-\beta H_d}\pi_{d,U}(x))}
       {\mathrm{Tr}(e^{-\beta H_d})} . 
$$
\end{proof}

\begin{corollary}\label{cor:nonunique}
For $r\ge2$ and every $\beta\in\mathbb R$, the $\KMS_\beta$ states of
$(\mathcal O_{r,r},\sigma)$ exist and are not unique.
\end{corollary}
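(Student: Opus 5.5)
Existence is immediate from Corollary~\ref{cor:findimtraces}. For every $d\ge1$ and every tuple $U=(U_1,\dots,U_r)$ of unitaries in $U(d)$, the Gibbs state $\varphi_{\beta,d,U}$ is a $\KMS_\beta$ state of $(\mathcal O_{r,r},\sigma)$. So the set is nonempty at every $\beta\in\mathbb R$. For non-uniqueness I will exhibit two such states that disagree on one explicit element, and I will choose that element inside the corner $A$. By Theorem~\ref{thm:corner-reduction}, $\varphi\mapsto Z_\beta\varphi|_A$ is injective, so two $\KMS_\beta$ states are equal if and only if their restrictions to $A$ agree. A single element of $A$ therefore suffices.

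The test element will be $v_{11}=s_1^{*}t_1\in A$. In the representation $\pi_{d,U}$ of Theorem~\ref{thm:findim} we have $\pi_{d,U}(v_{11})=V_1^{*}V_1U_1=U_1$, viewed as an operator on $H_q$ and extended by zero. The Gibbs formula then gives
$$
\varphi_{\beta,d,U}(v_{11})=\frac{\mathrm{Tr}\bigl(e^{-\beta H_d}U_1\bigr)}{d(1+r^{1-\beta})}=\frac{\mathrm{Tr}_K(U_1)}{d(1+r^{1-\beta})},
$$
because $e^{-\beta H_d}$ acts as the identity on $H_q$. This uses $\pi_{d,U}(q)=q$ and $H_d=(\log r)\pi_{d,U}(p)$, which vanishes on $H_q$.

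Now take $d=1$. With all $U_i=1$ the value is $1/(1+r^{1-\beta})$. With $U_1=-1$ and the other $U_i=1$ it is $-1/(1+r^{1-\beta})$. These differ, so the two states are distinct $\KMS_\beta$ states at every real $\beta$. The same computation with $U_1=\lambda\in\mathbb T$ produces a whole circle of distinct states.

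I do not expect a real obstacle. The only point to check is that $\pi_{d,U}(v_{11})$ is exactly $U_1$ on $H_q$, and that $e^{-\beta H_d}=r^{-\beta}\pi(p)+\pi(q)$ restricts to the identity there. Both follow at once from the definitions of $S_i=V_i$ and $T_i=V_iU_i$.
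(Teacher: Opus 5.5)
Your proposal is correct and follows the paper's proof: existence comes from Corollary~\ref{cor:findimtraces}, and non-uniqueness from taking $d=1$ with $U_1=\pm1$, so that the two Gibbs states take the values $\pm(1+r^{1-\beta})^{-1}$ at $s_1^{*}t_1$. The appeal to injectivity of $R_\beta$ is unnecessary, since two states that differ at any single element are already distinct.
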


\begin{proof}
Existence is Corollary~\ref{cor:findimtraces}. For non-uniqueness, take $d=1$,
so that each $U_i$ is a scalar in $\T$. In this representation
$\pi_{1,U}(s_i^*t_i)=S_i^*T_i=U_i$ on the $q$-summand and vanishes on the
$p$-summand, hence
$$
\varphi_{\beta,1,U}(s_i^*t_i)=\frac{U_i}{1+r^{1-\beta}} .
$$
Choosing $U_1=1$ and $U_1=-1$, with the remaining $U_i$ equal to $1$ in both
cases, gives two states whose values at $s_1^*t_1$ are
$\pm(1+r^{1-\beta})^{-1}$, hence two distinct $\KMS_\beta$ states at every
$\beta\in\R$.
\end{proof}

\begin{example}\label{ex:O22}
Take $r=2$ and $d=1$ in Theorem~\ref{thm:findim}. Let $U_1=1$ and
$U_2=e^{i\theta}$. The Hilbert space is $\C^2\oplus\C$, the projection $p$
has rank $2$, and $q$ has rank $1$. The Gibbs denominator is
$$
        \Tr(e^{-\beta(\log2)p})=\Tr(q+2^{-\beta}p)=1+2^{1-\beta} ,
$$
so that, for every real $\beta$,
$$
        \varphi_{\beta,\theta}(s_2^*t_2)
        =\frac{e^{i\theta}}{1+2^{1-\beta}} .
$$
Thus $\theta=0$ and $\theta=\pi$ give different $\KMS_\beta$ states at the
same inverse temperature.
\end{example}

The traces produced above are far from exhausting $\Tr(\mathcal O_{r,r})$.
We show below that $A$ has many quotients isomorphic to $C^*(\mathbb F_r)$, so that the trace simplex of the latter embeds affinely in
$\Tr(\mathcal O_{r,r})$. Under this identification, the traces produced above are exactly those coming from the finite-dimensional unitary representations of $\mathbb F_r$. We remark that, for the identity
permutation, a related statement appears in \cite[Example~6.6]{AraLolk2018}.

\begin{proposition}\label{prop:free-group-traces}
Let $r\ge2$, let $\Sigma_r$ be the permutation group on $r$ elements, and let $\varpi \in \Sigma_r$. The closed ideal $J_\varpi$ of $A$ generated by
$\{v_{ij}:j\ne\varpi(i)\}$ is proper, and $A/J_\varpi\cong C^*(\mathbb F_r)$, the
image of $v_{i\varpi(i)}$ corresponding to the $i$th canonical unitary generator.
Consequently $\Tr\left(C^*(\mathbb F_r)\right)$ embeds affinely in
$\Tr(\mathcal O_{r,r})$ as a face, and $\Tr(\mathcal O_{r,r})$ is
infinite-dimensional. Under this identification, the traces of Corollary~\ref{cor:findimtraces}
correspond to the finite-dimensional unitary representations of $\mathbb F_r$;
the remaining traces of $C^*(\mathbb F_r)$, such as the regular one, give
further $\KMS_\beta$ states of $\mathcal O_{r,r}$.
\end{proposition}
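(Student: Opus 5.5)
The plan is to construct mutually inverse homomorphisms between $A/J_\varpi$ and $C^*(\mathbb F_r)$, and then transport traces. Let $u_1,\dots,u_r$ be the canonical unitary generators of $C^*(\mathbb F_r)$, and define $w_{ij}=\delta_{j\varpi(i)}u_i$ in $B=C^*(\mathbb F_r)$.

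\textbf{Step 1: a homomorphism $A\to C^*(\mathbb F_r)$.} Each row and each column of $(w_{ij})$ contains exactly one unitary, because $\varpi$ is a bijection. Hence
\[
\sum_j w_{ij}w_{ij}^*=u_iu_i^*=1,\qquad \sum_i w_{ij}^*w_{ij}=u_{\varpi^{-1}(j)}^*u_{\varpi^{-1}(j)}=1.
\]
The family is tame: every word in the $w_{ij}$ and their adjoints is either $0$ or a unitary group element, and both are partial isometries. Proposition~\ref{prop:Auniversal} therefore gives a unital $\rho\colon A\to C^*(\mathbb F_r)$ with $\rho(v_{ij})=w_{ij}$. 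This map kills every $v_{ij}$ with $j\ne\varpi(i)$, so $J_\varpi\subseteq\ker\rho$. In particular $J_\varpi$ is proper, since $\rho(q)=1\neq0$, and $\rho$ descends to a surjection $\bar\rho\colon A/J_\varpi\to C^*(\mathbb F_r)$.

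\textbf{Step 2: the inverse map.} In the quotient, the relation $\sum_j v_{ij}v_{ij}^*=q$ collapses to $\dot v_{i\varpi(i)}\dot v_{i\varpi(i)}^*=\dot q$. Likewise the column relation for $j=\varpi(i)$ collapses to $\dot v_{i\varpi(i)}^*\dot v_{i\varpi(i)}=\dot q$. So each $\dot v_{i\varpi(i)}$ is a unitary in the unital algebra $A/J_\varpi$. The universal property of $C^*(\mathbb F_r)$ then gives $\lambda\colon C^*(\mathbb F_r)\to A/J_\varpi$ with $u_i\mapsto\dot v_{i\varpi(i)}$.

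\textbf{Step 3: they are inverse.} The quotient is generated by the images of all $v_{ij}$, and those with $j\ne\varpi(i)$ vanish there. So both composites $\bar\rho\lambda$ and $\lambda\bar\rho$ are the identity on generators, and $A/J_\varpi\cong C^*(\mathbb F_r)$ with $\dot v_{i\varpi(i)}\leftrightarrow u_i$.

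\textbf{Step 4: traces.} Traces of a quotient pull back to traces of $A$, giving an affine injective weak-$*$ continuous map $\Tr(C^*(\mathbb F_r))\to\Tr(A)$. Its image is exactly the set of traces vanishing on $J_\varpi$. That set is a face: if $\tau=t\tau_1+(1-t)\tau_2$ with $0<t<1$ and $\tau$ vanishes on $J_\varpi^+$, then so do $\tau_1$ and $\tau_2$, by positivity.

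Composing with the affine homeomorphism $\Tr(A)\cong\Tr(\mathcal O_{r,r})$ from Corollary~\ref{cor:corner-diagonal} keeps it a face, since affine homeomorphisms preserve faces. Infinite-dimensionality follows because $\Tr(C^*(\mathbb F_r))$ is infinite-dimensional: the one-dimensional characters $u_i\mapsto z_i\in\T$ give a continuum of distinct extreme traces, and these remain extreme in $\Tr(\mathcal O_{r,r})$ since they lie in a face.

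\textbf{Step 5: the finite-dimensional traces and the regular trace.} In the representation $\pi_{d,U}$ of Theorem~\ref{thm:findim}, we have $S_i^*T_j=V_i^*V_jU_j=\delta_{ij}U_j$ on $H_q$. So the compression to $H_q$ of $\pi_{d,U}|_A$ kills $v_{ij}$ for $i\ne j$ and sends $v_{ii}\mapsto U_i$. It therefore factors through $A/J_{\id}\cong C^*(\mathbb F_r)$ as the finite-dimensional unitary representation $u_i\mapsto U_i$.

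Then $(r+1)\tau_{d,U}|_A=d^{-1}\Tr\circ\pi_{d,U}|_{H_q}$ is the normalized trace of that representation. To get all $\varpi$ rather than only the identity, I would twist the construction by taking $T_j=V_{\varpi^{-1}(j)}U_j$; this is the same construction with the $t$'s relabelled. Conversely, every finite-dimensional unitary representation of $\mathbb F_r$ arises this way, so the correspondence is exact. The regular trace, and any other trace not of this form, yields through Corollary~\ref{cor:gibbs} further $\KMS_\beta$ states.

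\textbf{Main obstacle.} I expect the only real subtlety to be Step 5: correctly tracking the normalization and the permutation, so that the stated correspondence is exact and not merely up to relabelling. The face claim is routine.
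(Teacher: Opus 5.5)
Your proposal is correct and essentially the paper's proof: Proposition~\ref{prop:Auniversal} gives $\rho$, and the universal property of $C^*(\mathbb F_r)$ gives the inverse. The face property comes from pulling back traces and the positivity argument, infinite-dimensionality from the characters $u_i\mapsto z_i$, and the finite-dimensional traces from $\pi_{d,U}(v_{ij})=\delta_{ij}U_i$ with the normalization $(r+1)\tau|_A$.

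The only difference is that you treat a general $\varpi$ directly via $w_{ij}=\delta_{j\varpi(i)}u_i$ and a relabelled $\pi_{d,U}$. The paper instead proves the case $\varpi=\mathrm{id}$ and transports it to $J_\varpi$ with the equivariant automorphism $\Theta_\varpi$ permuting the $t_j$. One small point: going from a continuum of extreme traces to infinite dimension uses that the trace space is a simplex, or equivalently that distinct characters are linearly independent, since a disk also has infinitely many extreme points.
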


\begin{proof}
    We first treat $\varpi=\mathrm{id}$ and write $J=J_{\mathrm{id}}$. Let
$g_1,\dots,g_r$ be the canonical unitary generators of the full group
$C^*$-algebra $C^*(\mathbb F_r)$ and put $w_{ij}=\delta_{ij}g_i$. This family
is tame, every word in the $w_{ij}$ and their adjoints being $0$ or a word in
the $g_i^{\pm1}$, and it satisfies the hypotheses of
Proposition~\ref{prop:Auniversal}, both sums having a single nonzero term.
There is therefore a unital $*$-homomorphism $\rho\colon A\to C^*(\mathbb F_r)$
with $\rho(v_{ij})=\delta_{ij}g_i$; it is surjective, since its range contains
the $g_i$, and it kills $v_{ij}$ for $i\ne j$. Hence $J\subseteq\ker\rho$, and
$\rho$ induces a surjection $\bar\rho\colon A/J\to C^*(\mathbb F_r)$.

Write $\bar v_i$ for the image of $v_{ii}$ in $A/J$. There the relations
\eqref{eq:corner-relations} have a single nonzero term on each side, so
$\bar v_i\bar v_i^{*}=\bar v_i^{*}\bar v_i=q$ and the $\bar v_i$ are unitaries;
they generate $A/J$ by Proposition~\ref{prop:corner-generators}. The universal
property of $C^*(\mathbb F_r)$ therefore gives a unital $*$-homomorphism
$\theta\colon C^*(\mathbb F_r)\to A/J$ with $\theta(g_i)=\bar v_i$. Since
$\bar\rho\circ\theta$ and $\theta\circ\bar\rho$ fix the generators of their
domains, both are the identity, and $A/J\cong C^*(\mathbb F_r)$; in particular
$A/J\ne0$, so $J$ is proper.

For general $\varpi\in\Sigma_r$, permuting $t_1,\dots,t_r$ defines by
Proposition~\ref{prop:universal} an automorphism $\Theta_\varpi$ of
$\mathcal O_{r,r}$ with $\Theta_\varpi(s_i)=s_i$ and
$\Theta_\varpi(t_j)=t_{\varpi(j)}$; it is equivariant because
$\sigma_t(t_j)=r^{it}t_j$ for every $j$. It fixes $q$, hence restricts to an
automorphism of $A$ with $\Theta_\varpi(v_{ij})=v_{i\varpi(j)}$, which carries
$J_{\mathrm{id}}$ onto $J_\varpi$. Composing with $\bar\rho$ we obtain an
isomorphism $\bar\rho_\varpi\colon A/J_\varpi\to C^*(\mathbb F_r)$ carrying the
image of $v_{i\varpi(i)}$ to $g_i$.

Fix $\varpi\in\Sigma_r$ and let $\varsigma$ be a trace of $C^*(\mathbb F_r)$.
Then $\psi_\varsigma:=\varsigma\circ\bar\rho_\varpi\circ\pi_{J_\varpi}$ is a
tracial state of $A$, and distinct $\varsigma$ give distinct $\psi_\varsigma$
because $\pi_{J_\varpi}$ is surjective; the assignment is clearly affine. By
Corollary~\ref{cor:corner-diagonal} it therefore embeds
$\Tr(C^*(\mathbb F_r))$ affinely into $\Tr(\mathcal O_{r,r})$. The image is the
set of traces vanishing on $J_\varpi$, which is a face (if a convex combination
of traces vanishes on $J_\varpi$, so does each summand, by positivity).

For $\lambda\in\T^{r}$ let $\chi_\lambda$ be the character of
$C^*(\mathbb F_r)$ with $\chi_\lambda(g_i)=\lambda_i$, obtained from the
universal property applied to the unitaries $\lambda_i\in\T$. Being
multiplicative, each $\chi_\lambda$ is an extreme trace, and distinct $\lambda$
give distinct characters. Extreme points of a face being extreme points of the
whole simplex, $\Tr(\mathcal O_{r,r})$ has infinitely many extreme points and
is therefore infinite-dimensional.

Finally, with the notation of Theorem~\ref{thm:findim} we have
$\pi_{d,U}(v_{ij})=S_i^{*}T_j=\delta_{ij}U_i$, since $V_i^{*}V_j=\delta_{ij}1_K$.
Hence $\pi_{d,U}$ annihilates $J_{\mathrm{id}}$, and the trace $\tau_{d,U}$ of
Corollary~\ref{cor:findimtraces} lies in the face indexed by
$\varpi=\mathrm{id}$; the corresponding tracial state of $A$ is
$d^{-1}\Tr\circ\pi_{d,U}|_A$, which under $\bar\rho_{\mathrm{id}}$ becomes the
normalized character $d^{-1}\Tr\circ\Lambda_U$ of the representation
$\Lambda_U\colon\mathbb F_r\to U(d)$ with $\Lambda_U(g_i)=U_i$. By the universal property of free groups, every
finite-dimensional unitary representation of $\mathbb F_r$ arises this way, so
the traces of Corollary~\ref{cor:findimtraces} are exactly the normalized
characters of such representations. They are far from exhausting
$\Tr(C^*(\mathbb F_r))$: the regular trace, for one, is not among them. Composing with the
automorphisms $\Theta_\varpi$ moves all of this to the remaining faces.
\end{proof}

\subsection{KMS states in the boundary cases}\label{ss:boundary}

We now treat the cases in which $m$ or $n$ equals $1$. It is known that
$\mathcal O_{1,n}$ and $\mathcal O_{n,1}$ are both isomorphic to
$M_2(\mathcal O_n)$ \cite[p.~1292]{AEK2013}; in particular, they are
Morita--Rieffel equivalent to the Cuntz algebra $\mathcal O_n$. In the
corner both the isomorphism and the identification of the
dynamics under it are immediate, because for $m=1$ the corner relations
\eqref{eq:corner-relations} degenerate into the Cuntz relations and the
weights $d_0=d_1=1$ make the twist in \eqref{eq:gamma} disappear.

\begin{corollary}\label{cor:corner-boundary}
Let $n\ge2$. Then $A\cong\mathcal O_n$, the isomorphism carrying $v_{1j}$
to the $j$th canonical generator, and
Proposition~\ref{prop:matrix-picture} gives
$\mathcal O_{1,n}\cong M_2(\mathcal O_n)$ with $\sigma$ corresponding to
$\id_{M_2}\otimes\gamma$, where $\gamma_t(w_j)=n^{it}w_j$ is the usual
Cuntz gauge dynamics. Consequently $\mathcal O_{1,n}$ has a unique
$\KMS_1$ state and no $\KMS_\beta$ state for $\beta\ne1$. The case
$\mathcal O_{n,1}$ is analogous, interchanging the two families.
\end{corollary}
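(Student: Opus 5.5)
For $m=1$ the corner relations of Proposition~\ref{prop:corner-generators} read $\sum_{j=1}^n v_{1j}v_{1j}^*=q$ and $v_{1j}^*v_{1j}=q$ for every $j$, since the second sum has a single term. So the $v_{1j}$ are isometries in $A$ whose ranges sum to the unit $q$. These are exactly the Cuntz relations, and the plan is to identify $A$ with $\mathcal O_n$ through the two universal properties. First, the universal property of $\mathcal O_n$ gives a unital surjection $\mathcal O_n\to A$ with $w_j\mapsto v_{1j}$; it is surjective because the $v_{1j}$ generate $A$. It is injective either by simplicity of $\mathcal O_n$ (as $q\neq0$ by Lemma~\ref{lem:corner-elementary}(v)) or via an inverse map. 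For the inverse, I apply Proposition~\ref{prop:Auniversal} to $w_{1j}:=w_j$ in $\mathcal O_n$. This requires tameness of $\{w_j\}$: every word in the Cuntz generators and their adjoints reduces to $0$ or to $w_\mu w_\nu^*$, which is a partial isometry. The two maps are then mutually inverse on generators.

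Next I transport the matrix picture. By Proposition~\ref{prop:matrix-picture} with $m=1$, the map $\Xi$ gives $\mathcal O_{1,n}\cong M_2(A)\cong M_2(\mathcal O_n)$. The weights are $d_0=d_1=1$, so the twist in \eqref{eq:gamma} disappears and $\gamma_t=\id_{M_2}\otimes\sigma^A_t$. With $\varepsilon=\log n$ we have $\sigma^A_t(v_{1j})=n^{it}v_{1j}$, which corresponds to the gauge dynamics $\gamma_t(w_j)=n^{it}w_j$ under the isomorphism above.

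For the KMS statement I use Theorem~\ref{thm:corner-reduction}, which identifies $K_\beta(\mathcal O_{1,n},\sigma)$ affinely with $K_\beta(\mathcal O_n,\gamma)$ (here $Z_\beta=2$). I then cite the classical result of Olesen--Pedersen \cite{OP1978}: the gauge dynamics on $\mathcal O_n$, normalized as here, has a unique $\KMS$ state, at $\beta=1$, and none at other finite $\beta$. The non-existence for $\beta\neq1$ also follows directly from Corollary~\ref{cor:beta-one}, since $1\neq n$, or from Lemma~\ref{lem:basic-kms}: $1=\psi(q)=\sum_j\psi(v_{1j}v_{1j}^*)=n^{1-\beta}$. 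Only existence and uniqueness at $\beta=1$ genuinely need the citation.

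The case $\mathcal O_{n,1}$ follows by interchanging the two families (Remark~\ref{rem:mn}) and using the isomorphism $\Omn\cong M_{n+1}(A)$ noted after Proposition~\ref{prop:matrix-picture}. Alternatively, it is the same computation with $s$ and $t$ swapped: $v_{i1}^*=t^*s_i$ then play the role of the Cuntz isometries. The dynamics then becomes the gauge action with $n^{-it}$, and I have to check that the resulting normalization still yields $\beta=1$. Corollary~\ref{cor:beta-one} guarantees this regardless of the sign convention, so the main obstacle is only bookkeeping: tracking this sign and the conjugation of generators so that the identification with the standard gauge action, and hence the citation of uniqueness, is correct.
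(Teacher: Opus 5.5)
Your proposal is correct and follows the paper's proof. In the corner, the relations with $q\neq0$ and simplicity of $\mathcal O_n$ give $A\cong\mathcal O_n$, the weights $d_0=d_1=1$ remove the twist in \eqref{eq:gamma}, and Theorem~\ref{thm:corner-reduction} together with the Olesen--Pedersen uniqueness theorem gives the KMS statement; your inverse map via Proposition~\ref{prop:Auniversal} is a harmless extra that avoids simplicity.

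One small correction to your alternative for $\mathcal O_{n,1}$. The factor $n^{-it}$ is on $v_{i1}$, but the Cuntz isometries $v_{i1}^*=t^*s_i$ transform by $n^{it}$, so the corner dynamics is exactly the standard gauge action. The sign does matter, since Corollary~\ref{cor:beta-one} only excludes $\beta\neq1$ and would not rescue a flipped action. Your first argument already suffices, because swapping the two families is an equivariant isomorphism $(\mathcal O_{n,1},\sigma)\cong(\mathcal O_{1,n},\sigma)$.
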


\begin{proof}
Write $v_j:=v_{1j}$. The second relation in \eqref{eq:corner-relations}
has a single summand, so $v_j^{*}v_j=q$, while the first reads
$\sum_jv_jv_j^{*}=q$. If a finite sum of projections is a projection,
then the individual projections are pairwise orthogonal; hence the ranges
of the $v_j$ are pairwise orthogonal and $\{v_j\}$ is a Cuntz family with
unit $q$. Since these elements generate $A$
(Proposition~\ref{prop:corner-generators}) and $A\ne0$
(Lemma~\ref{lem:corner-elementary}(v)), simplicity of $\mathcal O_n$ gives
$A\cong\mathcal O_n$. For $m=1$ we have $d_0=d_1=1$, so the twist in
\eqref{eq:gamma} is trivial and $\gamma=\id_{M_2}\otimes\sigma^{A}$, with
$\sigma^{A}_t(v_j)=n^{it}v_j$ by \eqref{eq:corner-dynamics}. The last
assertion now follows from Theorem~\ref{thm:corner-reduction} together
with the uniqueness theorem for the Cuntz gauge action
\cite{OP1978,EFW1984}.
\end{proof}

\begin{example}\label{ex:boundary-O12}
In $\mathcal O_{1,2}$, put $v_j=s^*t_j$. Then $v_1,v_2$ are Cuntz
generators in the $q$-corner and
$
        \sigma_t(v_j)=s^*2^{it}t_j=2^{it}v_j .
$
Thus $\mathcal O_{1,2}\cong M_2(\mathcal O_2)$ with dynamics
$\id_{M_2}\otimes\gamma$, and the unique KMS state is
$\operatorname{tr}_2\otimes\varphi_2$, where $\varphi_2$ is the usual
Cuntz KMS state.
\end{example}

Finally we consider the exceptional point $(m,n)=(1,1)$. Here both
energies $\log m$ and $\log n$ vanish, so the normalized dynamics is
trivial on all of $\mathcal O_{1,1}$ and the KMS condition reduces to the
trace condition at every inverse temperature. The algebra itself is also
elementary, since the corner is generated by the single unitary $v_{11}=s^*t$.

\begin{proposition}\label{prop:O11}
There is an isomorphism $\mathcal O_{1,1}\cong M_2(C(\T))$ under which the
normalized dynamics is trivial. Hence, for every real $\beta$, the
$\KMS_\beta$ states are exactly the tracial states, equivalently the
states obtained by integrating the normalized matrix trace against a
Borel probability measure on $\T$.
\end{proposition}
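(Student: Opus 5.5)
The plan is to work in the corner, as in Corollary~\ref{cor:corner-boundary}. For $(m,n)=(1,1)$ we write $v:=v_{11}=s^{*}t$. Each corner relation in \eqref{eq:corner-relations} has a single summand, so $vv^{*}=q=v^{*}v$. Hence $v$ is a unitary in $A$, and it generates $A$ by Proposition~\ref{prop:corner-generators}.

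First we show that $A\cong C(\T)$. The universal property of $C(\T)$ gives a surjection $C(\T)\to A$ sending $z$ to $v$. For the reverse map we use Proposition~\ref{prop:Auniversal} with $B=C(\T)$ and $w_{11}=z$. The family $\{z\}$ is tame, since every word in $z,\bar z$ is a unitary, and the two relations read $z\bar z=1$. This yields a unital $*$-homomorphism $A\to C(\T)$ with $v\mapsto z$. The two maps are mutually inverse on generators, so $A\cong C(\T)$.

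Next, Proposition~\ref{prop:matrix-picture} with $m=1$ gives $\Xi\colon\mathcal O_{1,1}\cong M_2(A)\cong M_2(C(\T))$. For the dynamics, note that here $\log m=\log n=0$. By \eqref{eq:normalized-action}, $\sigma_t$ then fixes $s$ and $t$, so $\sigma$ is trivial on all of $\mathcal O_{1,1}$. This matches \eqref{eq:gamma}: the weights $d_0=d_1=1$ and $\varepsilon=0$ make $\gamma$ trivial.

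For a trivial action every element is analytic with $\sigma_z=\id$. The $\KMS_\beta$ identity therefore becomes $\varphi(ab)=\varphi(ba)$ for every $\beta$, so the $\KMS_\beta$ states are exactly the tracial states. Finally we identify $\Tr(M_2(C(\T)))$. Every trace on $M_2(C)$ with $C$ commutative has the form $\operatorname{tr}_2\otimes\mu$, where $\mu$ is a state on $C$: traciality against matrix units forces the off-diagonal terms to vanish and the diagonal terms to coincide, exactly as in the identification used in Corollary~\ref{cor:corner-diagonal}. Every state on $C(\T)$ is commutative and hence tracial, and by Riesz it is integration against a Borel probability measure on $\T$.

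No step is a real obstacle. The only point needing care is applying Proposition~\ref{prop:Auniversal} to get the inverse map $A\to C(\T)$, which rules out the possibility that $A$ is a proper quotient of $C(\T)$; checking tameness there is trivial.
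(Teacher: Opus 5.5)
Your proposal is correct and follows essentially the paper's route. The corner generator $v=s^{*}t$ is a unitary generating $A$, and Proposition~\ref{prop:Auniversal} makes $A$ universal for a single unitary, so $A\cong C(\T)$. Proposition~\ref{prop:matrix-picture} then gives $\mathcal O_{1,1}\cong M_2(C(\T))$, and since $\log 1=0$ the dynamics is trivial, so the $\KMS_\beta$ condition reduces to the trace identity. You go slightly beyond the paper's proof in two places: you write out both maps $C(\T)\to A$ and $A\to C(\T)$ explicitly, and you justify that the traces of $M_2(C(\T))$ are exactly $\operatorname{tr}_2\otimes\mu$ for Borel probability measures $\mu$ on $\T$, which the paper leaves implicit.
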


\begin{proof}
Write $s,t$ for the two generators and $v=v_{11}=s^{*}t$. By
\eqref{eq:corner-relations}, $vv^{*}=v^{*}v=q$, so $v$ is a unitary of $A$, and
$A=C^{*}(v)$ by Proposition~\ref{prop:corner-generators}. By
Proposition~\ref{prop:Auniversal}, $A$ is universal for a single unitary,
hence $A\cong C(\T)$ with $v\mapsto z$, and
$\mathcal O_{1,1}\cong M_2(C(\T))$ by
Proposition~\ref{prop:matrix-picture}. Since $\log1=0$ the dynamics is trivial,
and for a trivial dynamics the KMS condition is precisely the trace identity.
\end{proof}

\subsection{Ground states}
\label{ss:ground}

Recall that a state $\varphi$ on a $C^*$-dynamical system $(B,\sigma)$ is
a \emph{ground state} if, for all entire analytic elements $a,b\in B$, the
function $z\mapsto\varphi\left(a\sigma_z(b)\right)$ is bounded on the
upper half-plane $\{z\in\C:\operatorname{Im}z\ge0\}$. For almost-periodic
$C^*$-dynamical systems, ground states can be described more generally as
states on an associated crystal $C^*$-algebra, as shown in
\cite{LacaNeshveyevYamashita2025};
Corollary~\ref{cor:corner-ground-cases} below is consistent with that
framework, since it identifies the ground-state simplex directly with the
state space of the corner.

The following criterion is a specialization of the ground-state criterion
for graded $C^*$-algebras in \cite[Proposition~1.5(ii)]{ExelLaca2003}. We
include the short proof in the present notation.

\begin{lemma}\label{lem:ground-spectral}
Let $(B,\sigma)$ be a $C^*$-dynamical system and let $B_0$ be a
$\sigma$-invariant dense $*$-subalgebra linearly spanned by homogeneous
entire analytic elements. If $\varphi$ is a ground state, then
$\varphi(ab)=0$ whenever $a\in B_0$ is homogeneous and $b\in B_0$ is
homogeneous of negative energy, meaning that $\sigma_t(b)=e^{itE}b$ with
$E<0$.
\end{lemma}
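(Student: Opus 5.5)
The plan is to use the ground-state condition directly. Fix $a\in B_0$ homogeneous and $b\in B_0$ homogeneous with $\sigma_t(b)=e^{itE}b$ and $E<0$. Since $b$ is entire analytic and its real orbit is $t\mapsto e^{itE}b$, uniqueness of analytic continuation gives $\sigma_z(b)=e^{izE}b$ for all $z\in\C$. This is the same argument as in Lemma~\ref{lem:basic-kms}. Hence
$$
f(z):=\varphi\left(a\sigma_z(b)\right)=e^{izE}\varphi(ab),\qquad z\in\C .
$$
The ground-state hypothesis says that $f$ is bounded on $\{\operatorname{Im}z\ge0\}$.

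Next, evaluate $f$ along the positive imaginary axis, $z=iy$ with $y\ge0$. There
$$
|f(iy)|=e^{-yE}|\varphi(ab)| .
$$
Because $E<0$, the factor $e^{-yE}=e^{y|E|}$ tends to infinity as $y\to\infty$. Boundedness of $f$ on the upper half-plane therefore forces $\varphi(ab)=0$.

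No real obstacle arises. The only point needing care is the identification of the analytic continuation $\sigma_z(b)=e^{izE}b$, and this follows from the identity theorem, since both sides are entire and agree on $\mathbb R$. Homogeneity of $a$ is not actually used beyond membership in $B_0$. Invariance and density of $B_0$ only ensure that the ground-state condition may be tested on these elements.
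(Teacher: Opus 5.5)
Your proof is correct and follows essentially the same argument as the paper: both write $\varphi(a\sigma_z(b))=e^{izE}\varphi(ab)$ and note that $|e^{izE}|=e^{y|E|}$ is unbounded as $\operatorname{Im}z=y\to\infty$, which forces $\varphi(ab)=0$. Your remark that homogeneity of $a$ is not needed is also right, since only the entire analyticity of $a$ enters the ground-state condition.
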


\begin{proof}
For a fixed homogeneous element $a$ we have
$\varphi(a\sigma_z(b))=e^{izE}\varphi(ab)$. If $z=t+iy$ with $y\ge0$, then
$|e^{izE}|=e^{-yE}=e^{y|E|}$, which is unbounded as $y\to\infty$.
Boundedness is therefore possible only if $\varphi(ab)=0$.
\end{proof}

We shall apply the lemma inside the corner. Let $A_0$ denote the
$*$-algebra generated algebraically by the $v_{ij}$; it is dense in $A$ by
Proposition~\ref{prop:corner-generators}, invariant under $\sigma^{A}$,
and spanned by words in the $v_{ij}$ and their adjoints, each of which is
homogeneous of energy an integer multiple of $\varepsilon$ by
\eqref{eq:corner-dynamics}. Thus $A_0$ satisfies the hypotheses of
Lemma~\ref{lem:ground-spectral}.

\begin{proposition}\label{prop:corner-ground}
Assume $(m,n)\neq(1,1)$. Then every ground state $\varphi$ of
$(\Omn,\sigma)$ satisfies $\varphi(p)=0$, and
$\varphi\mapsto\varphi|_A$ is an affine weak-$*$ homeomorphism from the
ground-state space of $(\Omn,\sigma)$ onto that of $(A,\sigma^{A})$,
with inverse $\psi\mapsto\psi(q\,\cdot\,q)$.
\end{proposition}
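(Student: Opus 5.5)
The plan is to first show that a ground state of $(\Omn,\sigma)$ lives entirely on the corner $A$, and then check that restriction and compression $\psi\mapsto\psi(q\cdot q)$ are mutually inverse between the two ground-state spaces.

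\emph{Step 1: every ground state vanishes on $p$.} Let $\mathcal B_0$ be the $*$-algebra spanned by $1$ and the words in $s_i,t_j,s_i^{*},t_j^{*}$. It is dense and $\sigma$-invariant, and each word is homogeneous and entire analytic, with energy an integer combination of $\log m$ and $\log n$. So Lemma~\ref{lem:ground-spectral} applies to $\mathcal B_0$. Since $(m,n)\neq(1,1)$, one of $\log m,\log n$ is positive; by the symmetry of Remark~\ref{rem:mn} we may assume $m\ge2$.
\begin{itemize}
\item Take $a=s_i$ and $b=s_i^{*}$. Here $b$ has energy $-\log m<0$, so the lemma gives $\varphi(p_i)=\varphi(s_is_i^{*})=0$ for every $i$.
\item Summing over $i$ gives $\varphi(p)=0$. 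If instead $m=1$ and $n\ge2$, the same argument with $t_j,t_j^{*}$ and \eqref{rel:common-range} gives the same conclusion.
\item Cauchy--Schwarz then gives $|\varphi(px)|^2\le\varphi(p)\varphi(x^{*}x)=0$, and similarly $\varphi(xp)=0$. Hence $\varphi(x)=\varphi(qxq)$ for all $x$, and $\varphi(q)=1$.
\end{itemize}
Consequently $\varphi|_A$ is a state of $A$. It is a ground state for $\sigma^A$, because $\sigma^A$-analytic elements of $A$ are $\sigma$-analytic with the same continuation. The identity $\varphi=\varphi|_A(q\cdot q)$ shows that restriction is injective and identifies its inverse on its range.

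\emph{Step 2: surjectivity.} Given a ground state $\psi$ of $(A,\sigma^A)$, I would set $\varphi(x)=\psi(qxq)$. This is clearly a state. The work is to show that $z\mapsto\varphi(a\sigma_z(b))$ is bounded on the closed upper half-plane. I would verify this only for $a,b$ homogeneous words in $\mathcal B_0$, and then invoke the standard fact (Bratteli--Robinson, \cite[Section~5.3]{BR2}) that the ground-state condition may be checked on a dense invariant $*$-subalgebra of entire analytic elements. I expect this reduction to be the main technical obstacle. If citing it directly is awkward, my first attempt would be to pass through the equivalent derivation criterion $-i\varphi(a^{*}\delta(a))\ge0$ and a core argument. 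For such words, inserting $1=q+\sum_ip_i$ from Lemma~\ref{lem:corner-elementary}(iii) gives
\[
qa\sigma_z(b)q=(qaq)\,\sigma_z(qbq)+\sum_{i=1}^m(qas_i)\,s_i^{*}\sigma_z(b)q .
\]
Since $\sigma_z(s_i^{*}bq)=m^{-iz}s_i^{*}\sigma_z(b)q$, the $i$th summand equals $m^{iz}(qas_i)\,\sigma^A_z(s_i^{*}bq)$. Both $qas_i$ and $s_i^{*}bq$ are analytic elements of $A$.
\begin{itemize}
\item Applying $\psi$, each term is bounded on $\operatorname{Im}z\ge0$, because $\psi$ is a ground state of $A$ and $|m^{iz}|=m^{-\operatorname{Im}z}\le1$.
\item This also covers $m=1$, where the factor is $1$. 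In the interchanged case one uses the $t_j$ and $n^{iz}$ instead.
\end{itemize}
Hence $\varphi$ is a ground state, and $\varphi|_A=\psi$.

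\emph{Step 3: conclusion.} Both maps are affine and weak-$*$ continuous, being given by evaluation at fixed elements. They are mutually inverse by Steps 1 and 2, which proves the proposition.
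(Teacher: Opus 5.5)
Your proposal is correct and follows the paper's proof. Under $\Xi$, your pair $s_i,s_i^{*}$ in Step 1 is the paper's pair $E_{i0}\otimes q$, $E_{0i}\otimes q$, and Step 2 is the paper's decomposition $\varphi(a\sigma_z(b))=\sum_{c}d_c^{iz}\psi\bigl(x_c\sigma^A_z(y_c)\bigr)$ with $x_c=qau_c$, $y_c=u_c^{*}bq$. The density reduction you flag as the main obstacle is unnecessary: your computation never uses that $a,b$ are words, so it holds verbatim for arbitrary entire analytic $a,b\in\Omn$, which is exactly how the paper argues.
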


\begin{proof}
Interchanging the two families if necessary, we may assume $m\ge2$, so
that $d_i=m>1$ for all $i\ge 1$.

Let $\varphi$ be a ground state. Fix $i\ge1$ and put $X=E_{i0}\otimes q$,
$Y=E_{0i}\otimes q$. By \eqref{eq:gamma-analytic},
$\gamma_z(Y)=m^{-iz}Y$, so that $|m^{-iz}|=m^{y}$ for $z=x+iy$, which is
unbounded as $y\to\infty$; since $XY=E_{ii}\otimes q=\Xi(p_i)$,
boundedness of $z\mapsto\varphi\left(X\gamma_z(Y)\right)$ forces
$\varphi(p_i)=0$. Summing over $i$ gives $\varphi(p)=0$. Since $p$ is a
projection, Cauchy--Schwarz gives
$$
        \left|\varphi(pa)\right|^{2}
        \le\varphi\left(p^{*}p\right)\varphi\left(a^{*}a\right)
        =\varphi(p)\,\varphi\left(a^{*}a\right)=0
        \qquad (a\in\Omn),
$$
so $\varphi$ vanishes on $p\Omn$, and on $\Omn p$ by taking adjoints.
Expanding $x=(p+q)x(p+q)$, the three terms involving $p$ therefore vanish and
$\varphi(x)=\varphi(qxq)$, so $\varphi|_A$ is a state of $A$. It is a ground
state for $\sigma^{A}$, because $\sigma^{A}$-analytic elements of $A$ are
$\sigma$-analytic with the same orbit.

Conversely, let $\psi$ be a ground state of $(A,\sigma^{A})$ and put
$\varphi(x):=\psi(qx q)$, a state of $\Omn$. Let
$a,b\in\Omn$ be entire analytic and set $x_c:=qau_c$ and
$y_c:=u_c^{*}bq$, which lie in $A$ and are entire analytic for
$\sigma^{A}$, since $\sigma_z(x_c)=d_c^{iz}q\sigma_z(a)u_c\in A$ and
$\sigma_z(y_c)=d_c^{-iz}u_c^{*}\sigma_z(b)q\in A$. Inserting
$1=\sum_cu_cu_c^{*}$,
$$
  \varphi\left(a\sigma_z(b)\right)
  =\psi\left(qa\sigma_z(b)q\right)
  =\sum_{c=0}^{m}d_c^{iz}\psi\left(x_c\sigma^{A}_z(y_c)\right).
$$
For $z=x+iy$ with $y\ge0$ we have $|d_c^{iz}|=d_c^{-y}\le1$, because
$d_c\ge1$, while each function
$z\mapsto\psi(x_c\sigma^{A}_z(y_c))$ is bounded on the upper
half-plane since $\psi$ is a ground state. Hence
$z\mapsto\varphi\left(a\sigma_z(b)\right)$ is bounded and $\varphi$ is a
ground state. The two maps are mutually inverse, affine and weak-$*$
continuous.
\end{proof}

\begin{corollary}\label{cor:corner-ground-cases}
For the normalized dynamics on $\Omn$ the ground-state space
is as follows.
\begin{enumerate}[label=\textup{(\alph*)}]
\item If $m\ne n$, there are no ground states; hence $\sigma$ is not
      approximately inner, by \cite[Theorem~2.3]{PowersSakai}.
\item If $m=n=r\ge2$, the ground states are precisely the states
      satisfying $\varphi(p)=0$, and restriction to $q\mathcal O_{r,r}q$
      is an affine homeomorphism onto the state space of that corner.
\item If $(m,n)=(1,1)$, the normalized dynamics is trivial and every
      state is a ground state.
\end{enumerate}
\end{corollary}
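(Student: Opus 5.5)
The plan is to reduce everything to the corner via Proposition~\ref{prop:corner-ground} and then read off the corner ground states case by case from the sign of the corner energy $\varepsilon=\log(n/m)$.

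\textbf{Case (a), $m\neq n$.} I would show that the corner $(A,\sigma^A)$ has no ground states, and then invoke Proposition~\ref{prop:corner-ground}.

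First suppose $m<n$, so that $\varepsilon>0$. Then $v_{ij}^*$ is homogeneous of negative energy $-\varepsilon$. Let $\psi$ be a ground state of $A$ and apply Lemma~\ref{lem:ground-spectral} with $a=v_{ij}$ and $b=v_{ij}^*$; this gives $\psi(v_{ij}v_{ij}^*)=0$ for all $i,j$. Summing over $j$ with the first corner relation in \eqref{eq:corner-relations} yields $\psi(q)=0$. This contradicts $\psi(q)=1$, so no such $\psi$ exists.

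If instead $m>n$, then $\varepsilon<0$. Now I apply the lemma with $a=v_{ij}^*$ and $b=v_{ij}$, which has energy $\varepsilon<0$. This gives $\psi(v_{ij}^*v_{ij})=0$, and summing over $i$ with the second corner relation again forces $\psi(q)=0$.

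In both subcases $(m,n)\neq(1,1)$, so Proposition~\ref{prop:corner-ground} transfers the emptiness of the corner ground-state space to $\Omn$. For the non-approximate-innerness statement, I would quote \cite[Theorem~2.3]{PowersSakai}: an approximately inner dynamics on a unital $C^*$-algebra admits a ground state, so the absence of ground states rules it out.

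\textbf{Case (b), $m=n=r\ge2$.} Here $\varepsilon=0$, so $\sigma^A$ is trivial by Proposition~\ref{prop:corner-generators}. For a trivial action, $z\mapsto\psi(a\sigma_z(b))=\psi(ab)$ is constant and hence bounded, so every state of $A$ is a ground state. Proposition~\ref{prop:corner-ground} then identifies, via restriction, the ground states of $\mathcal O_{r,r}$ with the state space of $A$, and it shows that every ground state satisfies $\varphi(p)=0$.

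It remains to prove the converse: any state with $\varphi(p)=0$ is a ground state. By the Cauchy--Schwarz step already used in the proof of Proposition~\ref{prop:corner-ground}, such a $\varphi$ satisfies $\varphi(x)=\varphi(qxq)$. Hence $\varphi=\psi(q\cdot q)$ with $\psi=\varphi|_A$ a state of $A$, and $\psi$ is automatically a ground state. The inverse map in Proposition~\ref{prop:corner-ground} then shows $\varphi$ is a ground state.

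\textbf{Case (c), $(m,n)=(1,1)$.} Here $\log 1=0$, so $\sigma$ is trivial on $\mathcal O_{1,1}$ (Proposition~\ref{prop:O11}). The function $z\mapsto\varphi(a\sigma_z(b))$ is constant, so every state is a ground state.

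\textbf{Main obstacle.} The only step needing care is matching the sign of $\varepsilon$ with the correct choice of the homogeneous pair $(a,b)$ in Lemma~\ref{lem:ground-spectral}, together with the correct corner relation to sum. The rest is bookkeeping through Proposition~\ref{prop:corner-ground}.
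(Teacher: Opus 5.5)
Your proof is correct and is essentially the paper's own argument: you reduce to the corner via Proposition~\ref{prop:corner-ground}, force $\psi(q)=0$ in case (a) by Lemma~\ref{lem:ground-spectral} with the pair chosen according to the sign of $\varepsilon$ and the matching corner relation, and use triviality of $\sigma^A$ in (b) and of $\sigma$ in (c). The only difference is that in (b) you spell out the converse, namely that a state with $\varphi(p)=0$ equals $\varphi|_A(q\cdot q)$ by Cauchy--Schwarz and is therefore a ground state. The paper leaves this step implicit in the proof of Proposition~\ref{prop:corner-ground}.
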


\begin{proof}
(a) Suppose first $m>n$, so that $\varepsilon<0$. If
$(\Omn,\sigma)$ had a ground state, then by
Proposition~\ref{prop:corner-ground} so would $(A,\sigma^{A})$, say
$\psi$. Applying Lemma~\ref{lem:ground-spectral} with $a=v_{ij}^{*}$ and
$b=v_{ij}$ gives $\psi(v_{ij}^{*}v_{ij})=0$; summing over $i$ and
using the second relation in \eqref{eq:corner-relations} yields
$\psi(q)=0$, a contradiction. If $n>m$ then $\varepsilon>0$ and one argues
in the same way with $a=v_{ij}$, $b=v_{ij}^{*}$, summing over $j$ and
using the first relation instead.

(b) Here $\varepsilon=0$, so $\sigma^{A}$ is trivial and every state of
$A$ is a ground state; now apply Proposition~\ref{prop:corner-ground},
whose proof also shows that the ground states are exactly the states
vanishing on $p$.

(c) By Proposition~\ref{prop:O11} the dynamics is trivial, so
$z\mapsto\varphi(a\sigma_z(b))=\varphi(ab)$ is constant.
\end{proof}

\begin{example}\label{ex:ground-O22}
For $\mathcal O_{2,2}$ the Hamiltonian implementing the normalized
dynamics is $(\log2)p$, so that $q\mathcal O_{2,2}q$ is the lower spectral
corner and a state is ground exactly when it vanishes on $p$. In the
finite-dimensional representation of Theorem~\ref{thm:findim} with $d=1$,
the vector state on the $H_q$ summand gives $\varphi(p)=0$,
$\varphi(q)=1$ and $\varphi(s_i^*t_i)=U_i$. Changing the scalar
$U_i\in\T$ changes the ground state.
\end{example}

\section{The transportation polytope of $\KMS_1$ states}
\label{sec:asymmetric}

Throughout this section, we assume that $m,n\geq 2$, unless stated otherwise. By
Corollary~\ref{cor:beta-one}, KMS states in the asymmetric case $m\neq n$ can only occur at $\beta=1$, and
there $Z_1=1+m^{0}=2$, so Theorem~\ref{thm:corner-reduction} specializes to an
affine weak-$*$ homeomorphism
\begin{equation}\label{eq:R1}
        R_1\colon K_1(\Omn,\sigma)\xrightarrow{\ \cong\ }
        K_1(A,\sigma^A),
        \qquad
        R_1(\varphi)=2\varphi|_A .
\end{equation}
Every question about $\KMS_1$ states of $\Omn$ is therefore a
question about the system $(A,\sigma^A)$, in which the dynamics has one
energy $\varepsilon=\log(n/m)$. 

Existence of $\KMS_1$ in this regime is established in Section~\ref{sec:regular-models} by
constructing models over the free group on $m+n$ generators. The same construction applies to $\KMS_1$ states in the diagonal case; no relation between $m$ and $n$ will be assumed. 
In Section~\ref{sec:nonunique} we show that
the $\KMS_1$ simplex surjects onto a 2-way transportation polytope whose
defining constraints are given by \eqref{eq:corner-relations}, and in
Section~\ref{sec:fibres} we identify the fibres over the vertices with $\KMS_1$
simplices of quotients of $A$ by ideals generated by generators.

\subsection{Existence through conformal branching models}
\label{sec:regular-models}

Let $X=P\sqcup Q$ be a standard non-atomic probability space with
$\mu(P)=\mu(Q)=\frac12$, and choose measurable partitions
$$
        P=P_1\sqcup\cdots\sqcup P_m=R_1\sqcup\cdots\sqcup R_n
$$
with $\mu(P_i)=\frac1{2m}$ and $\mu(R_j)=\frac1{2n}$; these are precisely the
values that Corollary~\ref{cor:normalization} forces on any $\KMS_1$
state. Next, choose measurable isomorphisms
$$
        \theta_i:Q\to P_i,
        \qquad
        \eta_j:Q\to R_j
$$
such that, for every measurable $E\subseteq Q$,
\begin{equation}\label{eq:conformal-basic}
        \mu(\theta_i(E))=\frac1m\mu(E),
        \qquad
        \mu(\eta_j(E))=\frac1n\mu(E).
\end{equation}

These maps exist by the isomorphism theorem for non-atomic standard probability
spaces: after normalizing the restricted measures, $Q$, $P_i$ and $R_j$ are all
isomorphic modulo null sets to $([0,1],\lambda)$; see, for example,
\cite[Sect.~17.F]{Kechris1995}. Undoing the normalization gives isomorphisms
satisfying \eqref{eq:conformal-basic}. These are defined only modulo null sets,
so we fix bimeasurable representatives on conull Borel subsets, simultaneously
for the finitely many maps involved.

We call such a system $(X,\mu,\{P_i\},\{R_j\},\{\theta_i\},\{\eta_j\})$ a
\emph{conformal branching model}. See Figure~\ref{fig:branching_model} for a
schematic representation.

\begin{figure}[H]
\centering
\begin{tikzpicture}[
scale=0.6,
    >=Latex,
    box/.style={draw, thick, minimum width=1cm, minimum height=2cm},
    gridline/.style={draw=black!50, thin}
]
    \coordinate (P_sw) at (0, -1.6);
    \coordinate (P_ne) at (4,  1.6);
    \draw[thick, fill=gray!5] (P_sw) rectangle (P_ne);
    \fill[gray!25] (1.33, -0.53) rectangle (2.66, 0.53);
    \node[font=\scriptsize] at (2, 0) {$P_i \cap R_j$};
    \draw[gridline] (0,  0.53) -- (4,  0.53);
    \draw[gridline] (0, -0.53) -- (4, -0.53);
    \node[left, font=\footnotesize] at (0,  1.06) {$P_1$};
    \node[left, font=\footnotesize] at (0,  0)    {$\vdots$};
    \node[left, font=\footnotesize] at (0, -1.06) {$P_m$};
    \draw[gridline] (1.33, -1.6) -- (1.33, 1.6);
    \draw[gridline] (2.66, -1.6) -- (2.66, 1.6);
    \node[above, font=\footnotesize] at (0.66, 1.6) {$R_1$};
    \node[above, font=\footnotesize] at (2.0,  1.6) {$\dots$};
    \node[above, font=\footnotesize] at (3.33, 1.6) {$R_n$};
    \node[below=0.1cm, font=\small] at (2, -1.6) {$P$};
    \node[box, fill=gray!8] (Q) at (6.8, 0) {};
    \node[above=0.1cm of Q, font=\small] {$\mu(P)=\mu(Q)=\tfrac{1}{2}$};
    \node[font=\small] at (6.8, 0) {$Q$};
    \draw[->, thick, black!80]
        (Q.north west) to[out=155, in=20]
        node[midway, below, font=\footnotesize] {$\theta_1$} (4, 1.2);
    \draw[->, thick, black!80]
        (Q.west) to[out=195, in=-10]
        node[midway, above, font=\footnotesize] {$\theta_m$} (4, -1.06);
    \draw[->, thick, black!80]
        (Q.south west) to[out=215, in=-50]
        node[midway, below, font=\footnotesize] {$\eta_1$} (0.66, -1.6);
    \draw[->, thick, black!80]
        (Q.south) to[out=235, in=-70]
        node[midway, below=0.05cm, font=\footnotesize] {$\eta_n$} (3.33, -1.6);
\end{tikzpicture}
\caption{A conformal branching model for $X=P\sqcup Q$. The grid on $P$ displays
the two partitions $P=P_1\sqcup\cdots\sqcup P_m$ and $P=R_1\sqcup\cdots\sqcup
R_n$ side by side. At this stage of the construction only
$\mu(P_i)=\frac1{2m}$ and $\mu(R_j)=\frac1{2n}$ are fixed. The overlaps
$\mu(P_i\cap R_j)$, one of which is shaded, are not yet determined; they become
the data of Section~\ref{sec:nonunique}, and their pullbacks to $Q$ are the
sets $Q_{ij}$ of Proposition~\ref{prop:corner-model}.}
\label{fig:branching_model}
\end{figure}

Let $G=\mathbb F(a_1,\ldots,a_m,b_1,\ldots,b_n)$ be the free group on $m+n$
generators, one for each of the maps $\theta_i,\eta_j$, and write
$\mathcal L=\{a_1^{\pm1},\ldots,a_m^{\pm1},b_1^{\pm1},\ldots,b_n^{\pm1}\}$ for
the set of letters. To each letter we associate a bimeasurable partial
bijection,
$$
        \vartheta_{a_i}=\theta_i,
        \qquad
        \vartheta_{b_j}=\eta_j,
        \qquad
        \vartheta_{\ell^{-1}}=\vartheta_\ell^{-1},
$$
and to each $g\in G$ with reduced expression $g=\ell_1\cdots\ell_k$ the
composition $\alpha_g=\vartheta_{\ell_1}\circ\cdots\circ\vartheta_{\ell_k}$,
defined on its maximal natural domain; we also set $\alpha_e=\id_X$. We retain
the label $g$ even when two of these partial maps happen to agree on a
measurable subset.

Each $\alpha_g$ has thus been defined separately, one reduced word at a time;
the next lemma checks that they fit together into a partial action of $G$ on
$X$.

\begin{lemma}\label{lem:measurable-partial-action}
For $g\in G$, put $X_g=\operatorname{ran}(\alpha_g)$. Then each $X_g$ is
measurable and $\alpha_g:X_{g^{-1}}\to X_g$ is a bimeasurable nonsingular
bijection. Moreover,
\begin{align}
        \alpha_{g^{-1}}&=\alpha_g^{-1},
        \label{eq:partial-inverse}\\
        \alpha_g(X_{g^{-1}}\cap X_h)&=X_g\cap X_{gh},
        \label{eq:partial-domains}\\
        \alpha_g(\alpha_h(x))&=\alpha_{gh}(x)\label{eq:partial-composition}
\end{align}
for every $x\in X_{h^{-1}}\cap X_{h^{-1}g^{-1}}$. Thus
$\{X_g,\alpha_g\}_{g\in G}$ is a measurable partial action of $G$ on $X$.
\end{lemma}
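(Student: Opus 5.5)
The plan is to reduce everything to the letters, and then pass to reduced words by composing partial bijections. Each letter map $\vartheta_\ell$ is a bimeasurable bijection between conull Borel subsets of $Q$ and $P_i$ (or $R_j$), or the inverse of one. By \eqref{eq:conformal-basic} it is nonsingular, in fact it scales measure by the constant factor $1/m$, $1/n$, $m$ or $n$. Compositions of bimeasurable nonsingular partial bijections are again such maps: the natural domain of $\vartheta_{\ell_1}\circ\cdots\circ\vartheta_{\ell_k}$ is obtained by iterated preimages of measurable sets, and images of measurable sets under bimeasurable injections are measurable. Hence $X_{g^{-1}}=\operatorname{dom}(\alpha_g)$ and $X_g=\operatorname{ran}(\alpha_g)$ are measurable, and $\alpha_g$ is a bimeasurable nonsingular bijection between them. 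For \eqref{eq:partial-inverse}, observe that the reduced word of $g^{-1}$ is $\ell_k^{-1}\cdots\ell_1^{-1}$. Then $\alpha_{g^{-1}}=\vartheta_{\ell_k}^{-1}\circ\cdots\circ\vartheta_{\ell_1}^{-1}=\alpha_g^{-1}$ as partial maps, and this is consistent with $\operatorname{dom}\alpha_g=X_{g^{-1}}$.

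The substantive point is \eqref{eq:partial-composition}, together with \eqref{eq:partial-domains}, because $\alpha_{gh}$ is defined from the \emph{reduced} word of $gh$, while $\alpha_g\circ\alpha_h$ uses the concatenation of the two reduced words. I would write $g=g'c$ and $h=c^{-1}h'$ with $gh=g'h'$ reduced, where $c$ is the maximal cancelling part. Then $\alpha_g\circ\alpha_h=\alpha_{g'}\circ\alpha_c\circ\alpha_{c^{-1}}\circ\alpha_{h'}$ as partial maps. Since $\alpha_c\circ\alpha_c^{-1}=\id_{X_c}$, this is the restriction of $\alpha_{g'}\circ\alpha_{h'}=\alpha_{gh}$ to $\alpha_{h'}^{-1}(X_c)$. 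So $\alpha_g\circ\alpha_h\subseteq\alpha_{gh}$ as graphs. This is \eqref{eq:partial-composition} for every $x\in\operatorname{dom}(\alpha_g\circ\alpha_h)=\alpha_h^{-1}(X_{g^{-1}}\cap X_h)$. This set lies in $X_{h^{-1}}\cap X_{h^{-1}g^{-1}}$, and the graph inclusion shows it is exactly the set of $x\in X_{h^{-1}}\cap X_{(gh)^{-1}}$ with $\alpha_h(x)\in X_{g^{-1}}$.

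The step I expect to be the main obstacle is the reverse containment. If $x\in X_{h^{-1}}\cap X_{h^{-1}g^{-1}}$, we need $\alpha_h(x)\in X_{g^{-1}}$. Equivalently, we need $\alpha_{h'}(x)\in X_c$, knowing that $x$ lies in the domains of $\alpha_{c^{-1}h'}$ and $\alpha_{g'h'}$. Since $\alpha_h=\alpha_{c^{-1}}\circ\alpha_{h'}$ is defined at $x$, we get $\alpha_{h'}(x)\in\operatorname{dom}\alpha_{c^{-1}}=X_c$. Then $\alpha_h(x)=\alpha_{c^{-1}}(\alpha_{h'}(x))$ lies in $\operatorname{ran}\alpha_{c^{-1}}=X_{c^{-1}}$. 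We must further check that $\alpha_c$ maps it back into the domain of $\alpha_{g'}$, and this holds because $\alpha_c\alpha_{c^{-1}}=\id$ returns $\alpha_{h'}(x)$, on which $\alpha_{g'}$ is defined since $x\in\operatorname{dom}\alpha_{g'h'}$. So $\alpha_h(x)\in\operatorname{dom}(\alpha_{g'}\circ\alpha_c)=X_{g^{-1}}$. I would also take care that ``maximal natural domain'' is interpreted with reduced words only, and that the fixed conull representatives make all these identities hold pointwise.

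Finally, \eqref{eq:partial-domains} follows by applying $\alpha_g$ to the identity $X_{g^{-1}}\cap X_h=\alpha_h(\operatorname{dom}(\alpha_g\circ\alpha_h))$. This gives $\alpha_g(X_{g^{-1}}\cap X_h)=\operatorname{ran}(\alpha_g\circ\alpha_h)$. The same graph argument, applied to inverses via \eqref{eq:partial-inverse}, identifies this range with $X_g\cap X_{gh}$. The conclusion that $\{X_g,\alpha_g\}$ is a measurable partial action is then a restatement of \eqref{eq:partial-inverse}--\eqref{eq:partial-composition} with $\alpha_e=\id_X$.
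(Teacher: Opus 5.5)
Your proof is correct, but it handles the key step by a different route from the paper.

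\textbf{The paper's route.} It never locates where cancellation happens. It proves the restriction principle $\vartheta_w\le\vartheta_{\operatorname{red}(w)}$ for an arbitrary word $w$, by deleting adjacent inverse pairs one at a time, and deduces $\alpha_g\circ\alpha_h\le\alpha_{gh}$. Both reverse inclusions then come from re-applying this inequality to auxiliary pairs. The inequality $\alpha_{gh}\circ\alpha_{h^{-1}}\le\alpha_g$ shows $\alpha_h(x)\in X_{g^{-1}}$, and $\alpha_{g^{-1}}\circ\alpha_{gh}\le\alpha_h$ gives \eqref{eq:partial-domains}.

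\textbf{Your route.} You use the normal form $g=g'c$, $h=c^{-1}h'$ with $g'h'$ reduced. This gives the exact formula $\alpha_g\circ\alpha_h=\alpha_{gh}|_{\alpha_{h'}^{-1}(X_c)}$.

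That formula already finishes the job. Since $\alpha_h=\alpha_{c^{-1}}\circ\alpha_{h'}$ and $\operatorname{dom}\alpha_{c^{-1}}=X_c$, the set $\alpha_{h'}^{-1}(X_c)$ is exactly $\operatorname{dom}\alpha_h=X_{h^{-1}}$. So your formula reads $\alpha_g\circ\alpha_h=\alpha_{gh}|_{X_{h^{-1}}}$, which is \eqref{eq:partial-composition} together with the required domain identity. The paragraph you flagged as the main obstacle is correct but redundant.

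Your derivation of \eqref{eq:partial-domains} via inverses is also fine once made explicit. One has $\operatorname{ran}(\alpha_g\circ\alpha_h)=\operatorname{dom}(\alpha_{h^{-1}}\circ\alpha_{g^{-1}})=X_g\cap X_{gh}$, by the composition identity applied to the pair $(h^{-1},g^{-1})$.

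\textbf{Trade-offs.} Your argument gives a one-line exact description of $\alpha_g\circ\alpha_h$, at the cost of invoking the free-group cancellation normal form. The paper's argument avoids all bookkeeping about where cancellation occurs. Its lemma for arbitrary, non-reduced words is also the more reusable statement.
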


\begin{proof}
A composition of bimeasurable partial bijections is again a bimeasurable
partial bijection on a measurable domain. If $\rho\colon A\to B$ and
$\kappa\colon C\to D$ are such maps, the natural domain of $\rho\circ\kappa$ is
$C\cap\kappa^{-1}(A)$, which is measurable. By induction on reduced-word
length, every $\alpha_g$ is therefore a bimeasurable nonsingular bijection
between measurable sets. Reversing a reduced word $g=\ell_1\cdots\ell_k$ gives
$\alpha_{g^{-1}}=\vartheta_{\ell_k}^{-1}\circ\cdots\circ\vartheta_{\ell_1}^{-1}
=\alpha_g^{-1}$, which is \eqref{eq:partial-inverse} and shows
$\operatorname{dom}(\alpha_g)=X_{g^{-1}}$.

Write $\rho\leq\kappa$ when $\rho$ is a restriction of $\kappa$, and let
$\vartheta_w$ be the maximal partial composition along an arbitrary word $w$ in
$\mathcal L$. If $w=u\ell\ell^{-1}v$ then
$\vartheta_\ell\circ\vartheta_{\ell^{-1}}\leq\id_X$, so
$\vartheta_w\leq\vartheta_u\circ\vartheta_v$; deleting adjacent inverse pairs
successively gives $\vartheta_w\leq\vartheta_{\operatorname{red}(w)}$. Applied to
the concatenation of the reduced words of $g$ and $h$, this yields
\begin{equation}\tag{$\ddagger$}\label{eq:partial-restriction}
        \alpha_g\circ\alpha_h\leq\alpha_{gh}.
\end{equation}
By \eqref{eq:partial-restriction}, assertion \eqref{eq:partial-composition}
amounts to the domain identity $\operatorname{dom}(\alpha_g\circ\alpha_h)
=\operatorname{dom}(\alpha_h)\cap\operatorname{dom}(\alpha_{gh})$, the
right-hand side being $X_{h^{-1}}\cap X_{h^{-1}g^{-1}}$. One inclusion is
immediate from \eqref{eq:partial-restriction}. Conversely, let $x$ lie in the
right-hand side and put $y=\alpha_h(x)$. Applying
\eqref{eq:partial-restriction} to $gh$ and $h^{-1}$ gives
$\alpha_{gh}\circ\alpha_{h^{-1}}\leq\alpha_g$, and the left-hand side is
defined at $y$ because
$\alpha_{h^{-1}}(y)=x\in\operatorname{dom}(\alpha_{gh})$. Hence
$y\in\operatorname{dom}(\alpha_g)$.

For \eqref{eq:partial-domains}, let $y\in X_{g^{-1}}\cap X_h$ and write
$y=\alpha_h(x)$. Then $\alpha_g\circ\alpha_h$ is defined at $x$, so
$\alpha_g(y)=\alpha_{gh}(x)\in X_g\cap X_{gh}$. Conversely, let
$z\in X_g\cap X_{gh}$, say $z=\alpha_{gh}(x)$, and put $y=\alpha_{g^{-1}}(z)$.
Applying \eqref{eq:partial-restriction} to $g^{-1}$ and $gh$ gives
$\alpha_{g^{-1}}\circ\alpha_{gh}\leq\alpha_h$, hence $y=\alpha_h(x)\in X_h$.
Since also $y\in X_{g^{-1}}$ and $z=\alpha_g(y)$, the reverse inclusion
follows.
\end{proof}

Define the group homomorphism $c:G\to\R$ by $c(a_i)=\log m$ and
$c(b_j)=\log n$, so that $c$ records the energies of the normalized dynamics. The next lemma
shows that $\mu$ is conformal for $c$, in the sense that each $\alpha_g$ scales
the measure by exactly $e^{-c(g)}$.

\begin{lemma}\label{lem:conformal-word}
For every $g\in G$ and every measurable set $E\subseteq X_{g^{-1}}$,
\begin{equation}\label{eq:conformal-word}
        \mu(\alpha_g(E))=e^{-c(g)}\mu(E).
\end{equation}
Equivalently, for every bounded measurable function $F$ supported in $X_g$,
\begin{equation}\label{eq:conformal-change-variables}
        \int_{X_g}F(x)d\mu(x)
        =e^{-c(g)}\int_{X_{g^{-1}}}F(\alpha_g y)d\mu(y).
\end{equation}
\end{lemma}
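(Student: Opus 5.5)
Induct on the length $k$ of the reduced word $g=\ell_1\cdots\ell_k$, with the single letters as the base case. For $g=e$ there is nothing to prove, since $\alpha_e=\id_X$ and $c(e)=0$. For a letter $\ell=a_i$, the map $\alpha_{a_i}=\theta_i$ has domain $X_{a_i^{-1}}=Q$, and \eqref{eq:conformal-basic} gives $\mu(\theta_i(E))=\frac1m\mu(E)=e^{-c(a_i)}\mu(E)$ for measurable $E\subseteq Q$. The same argument with $\frac1n$ handles $\ell=b_j$. For $\ell=a_i^{-1}$, take $E\subseteq P_i=X_{a_i}$ and put $E'=\theta_i^{-1}(E)\subseteq Q$. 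Then \eqref{eq:conformal-basic} gives $\mu(E)=\frac1m\mu(E')$, that is, $\mu(\theta_i^{-1}(E))=m\,\mu(E)=e^{-c(a_i^{-1})}\mu(E)$, since $c$ is a homomorphism. The case $b_j^{-1}$ is identical. This step uses that the $\theta_i,\eta_j$ are genuine bijections onto their ranges, after fixing the representatives on conull sets as in the construction. Null sets are harmless throughout, because all maps are nonsingular by Lemma~\ref{lem:measurable-partial-action}.

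For the inductive step, write $g=\ell h$ with $h$ reduced of length $k-1$. By construction $\alpha_g=\vartheta_\ell\circ\alpha_h$ on its natural domain $X_{g^{-1}}=\{x\in X_{h^{-1}}:\alpha_h(x)\in\operatorname{dom}\vartheta_\ell\}$. For measurable $E\subseteq X_{g^{-1}}$, the set $\alpha_h(E)$ is measurable, by Lemma~\ref{lem:measurable-partial-action}, and lies in the domain of $\vartheta_\ell$. Applying first the letter case and then the induction hypothesis,
\[
\mu(\alpha_g(E))=e^{-c(\ell)}\mu(\alpha_h(E))=e^{-c(\ell)}e^{-c(h)}\mu(E)=e^{-c(g)}\mu(E),
\]
again because $c$ is a homomorphism. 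The only point needing care is that the composition defining $\alpha_g$ is exactly this one-letter-at-a-time composition; this holds by definition of $\alpha_g$ for reduced words.

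For the equivalent integral form, \eqref{eq:conformal-word} says that the push-forward of $\mu|_{X_{g^{-1}}}$ under $\alpha_g$ equals $e^{c(g)}\mu|_{X_g}$. Take $F=\chi_B$ with $B\subseteq X_g$ measurable and set $E=\alpha_g^{-1}(B)=\alpha_{g^{-1}}(B)$. Then the right-hand side of \eqref{eq:conformal-change-variables} is $e^{-c(g)}\mu(E)=\mu(\alpha_g(E))=\mu(B)$, which is the left-hand side. Linearity extends this to simple functions, and uniform approximation, or monotone convergence on positive and negative parts, extends it to bounded measurable $F$ supported in $X_g$. Conversely, setting $F=\chi_{\alpha_g(E)}$ in \eqref{eq:conformal-change-variables} recovers \eqref{eq:conformal-word}.

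I expect no serious obstacle here. The main thing to watch is the bookkeeping of domains and null sets in the inductive step, which Lemma~\ref{lem:measurable-partial-action} already settles.
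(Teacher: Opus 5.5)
Your proposal is correct and follows essentially the same route as the paper. Both arguments get the one-letter formula from \eqref{eq:conformal-basic}, handling inverse letters by applying it to the preimage. Both then compose along the reduced word, which you formalize as an induction where the paper says ``successively applying the one-letter formula,'' and both pass to the integral form via characteristic functions, simple functions and uniform approximation; your explicit check of the converse direction is a small addition the paper leaves implicit.
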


\begin{proof}
For the positive generators, \eqref{eq:conformal-word} is exactly
\eqref{eq:conformal-basic}. For inverse letters, let $\ell\in\{a_i,b_j\}$ and
let $E\subseteq X_\ell$ be measurable. Applying the formula for $\ell$ to
$\vartheta_{\ell^{-1}}(E)$ gives
$$
        \mu(\vartheta_{\ell^{-1}}(E))
        =e^{c(\ell)}\mu(E)
        =e^{-c(\ell^{-1})}\mu(E).
$$
The formula for a reduced word $g=\ell_1\cdots\ell_k$ now follows by
successively applying the one-letter formula along the maximal partial
composition. Since $c(g)=\sum_{r=1}^k c(\ell_r)$, the product of the
Radon--Nikodym factors is $e^{-c(g)}$. The integral formula follows first for
characteristic functions, then for simple functions, and finally for bounded
measurable functions by uniform approximation.
\end{proof}

Put $D_g=L^\infty(X_g)$, viewed as an ideal in $D=L^\infty(X)$. Since every
$\alpha_g$ is nonsingular, the formula
$$
        \Theta_g:D_{g^{-1}}\to D_g,
        \qquad
        \Theta_g(f)=f\circ\alpha_g^{-1},
$$
is independent of the chosen representative of $f$. By
Lemma~\ref{lem:measurable-partial-action}, the maps $\Theta_g$ form a partial
action of $G$ on $D$ by normal $*$-isomorphisms.

Let $\mathcal B_X^{\rm alg}$ be the algebraic partial crossed product, i.e.\ the
span of symbols $f\delta_g$ with $f\in D_g$, equipped with
\begin{align*}
        (f\delta_g)(h\delta_k)
        &=\Theta_g\left(\Theta_{g^{-1}}(f)h\right)\delta_{gk},\\
        (f\delta_g)^*
        &=\Theta_{g^{-1}}(\overline f)\delta_{g^{-1}}.
\end{align*}
Let $B_X$ be its reduced partial crossed product, equivalently the reduced
cross-sectional algebra of the associated Fell bundle
\cite{Exel1994,ExelBook2017}. On finite sums define
\begin{equation}\label{eq:EX-finite}
        E_X\left(\sum_{g\in F}f_g\delta_g\right)=f_e.
\end{equation}

\begin{proposition}\label{prop:regular-expectation}
The map \eqref{eq:EX-finite} extends to a faithful conditional expectation
$E_X:B_X\to D$.
\end{proposition}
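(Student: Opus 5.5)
This is the standard fact that a reduced partial crossed product carries a faithful conditional expectation onto the coefficient algebra. I will argue it directly from the construction of $B_X$ as the reduced cross-sectional algebra of the Fell bundle $\{D_g\delta_g\}_{g\in G}$, following \cite{Exel1994,ExelBook2017}. Recall the regular representation. The fibre $D=D_e\delta_e$ is a $C^*$-algebra, and the Hilbert $D$-module $\ell^2(\mathcal B)$ is the completion of finitely supported sections under $\langle\xi,\eta\rangle=\sum_g\xi_g^*\eta_g$. The left regular representation $\Lambda$ acts by convolution, and $B_X$ is the closure of $\Lambda(\mathcal B_X^{\rm alg})$ in the adjointable operators on $\ell^2(\mathcal B)$.

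\emph{Contractivity.} Let $j_e\colon D\to\ell^2(\mathcal B)$ be the inclusion in the $e$-coordinate. This is an adjointable isometry, with adjoint the projection onto the $e$-coordinate. For a finite sum $x=\sum_g f_g\delta_g$, the only term of $\Lambda(x)j_e(d)$ landing in the $e$-coordinate comes from $g=e$. Hence
\[
j_e^*\Lambda(x)j_e=f_e ,
\]
identified with left multiplication on $D$, which is faithful since $D$ is unital. It follows that $\|E_X(x)\|\le\|\Lambda(x)\|$. So \eqref{eq:EX-finite} extends by continuity to a contractive linear map $E_X\colon B_X\to D$, given by $E_X(b)=j_e^*bj_e$. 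The formula also gives complete positivity. Idempotence and the bimodule property $E_X(d_1bd_2)=d_1E_X(b)d_2$ for $d_i\in D$ are checked on the dense algebraic part using $(h\delta_e)(f\delta_g)=hf\delta_g$, and then pass to the limit. By Tomiyama's theorem, or directly, $E_X$ is a conditional expectation.

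\emph{Faithfulness.} This is the main point, and it is exactly where reducedness is used. Suppose $E_X(b^*b)=0$. For each $g$, let $j_g$ be the inclusion of the fibre $D_g\delta_g$ into the $g$-coordinate. The plan is to show that
\[
\langle b\,j_g(\xi),b\,j_g(\xi)\rangle
\]
is controlled by $E_X(b^*b)$, twisted by $\Theta_{g^{-1}}$. On finite sums one checks the covariance identity
\[
j_g(\xi)^*\,\Lambda(x)\,j_g(\xi)=\xi^*E_X\bigl(\delta_{g}^{*}x\,\delta_g\bigr)\xi ,
\]
where the conjugation is really by the fibre element $\xi\delta_g$. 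This gives $\|b\,j_g(\xi)\|^2\le\|\xi\|^2\,\|E_X\bigl((\xi\delta_g)^*b^*b(\xi\delta_g)\bigr)\|$. By the bimodule property and positivity, $E_X(b^*b)=0$ forces $E_X(y^*b^*by)=0$ for $y$ in each fibre. The cleanest route here is Cauchy--Schwarz applied to the positive map $E_X$, together with the inequality $y^*b^*by\le\|y\|^2$ in the appropriate sense after compressing. Therefore $b$ vanishes on the submodule spanned by all $j_g(D_g)$, which is dense in $\ell^2(\mathcal B)$. Hence $b=0$.

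If this hand computation becomes messy, I would instead simply invoke the general statement from \cite[Proposition~17.12 and Section~19]{ExelBook2017}: for any Fell bundle, the reduced cross-sectional algebra admits a faithful conditional expectation onto the unit fibre that extends the coefficient map at $e$. Here the unit fibre is $D_e=L^\infty(X)=D$, and on finite sums that expectation is exactly \eqref{eq:EX-finite}.
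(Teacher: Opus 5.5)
Your fallback is exactly the paper's proof. The paper identifies $B_X$ with $C^*_r(\mathcal B)$ for the bundle $B_g=D_g\delta_g$, quotes \cite[Propositions~17.13 and~19.3]{ExelBook2017}, and adds only the formula $E_X(z^*z)=\sum_g\Theta_{g^{-1}}(|f_g|^2)$ on finite sums for later use. Your hand construction $E_X(b)=j_e^*bj_e$ is also correct.

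The self-contained faithfulness argument, however, has a genuine gap. The notation is inconsistent: the display conjugates $x$ by $\delta_g$ and then by $\xi$, while the bound conjugates by $\xi\delta_g$ and still carries a factor $\|\xi\|^2$. That is secondary. The real problem is the decisive step ``$E_X(b^*b)=0$ forces $E_X(y^*b^*by)=0$ for $y$ in each fibre'', which does not follow from what you invoke:
\begin{itemize}
\item The bimodule property is only over $D=B_e$. It says nothing about $y\in B_g$ with $g\ne e$.
\item The Kadison--Schwarz inequality $E_X(a^*c)^*E_X(a^*c)\le\|E_X(a^*a)\|\,E_X(c^*c)$, with $a=b$ and $c=by$, only gives $E_X(b^*by)=0$.
\item There is no inequality $y^*ay\le\|y\|^2a$ for positive $a$. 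The valid one, $y^*ay\le\|a\|\,y^*y$, says nothing about $E_X(a)$.
\end{itemize}
In fact positivity plus bimodularity cannot suffice. For the conditional expectation $E(x)=x_{11}1$ on $M_2$ one has $E(e_{22})=0$ but $E(e_{12}e_{22}e_{21})=1$. The missing input is the covariance $E_X(y^*xy)=y^*E_X(x)y$ for homogeneous $y$. This comes from the grading, not from positivity.

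The repair is a one-line computation in the regular representation, and it makes Cauchy--Schwarz unnecessary.
\begin{enumerate}
\item For a finite sum $x=\sum_hx_h$ and $y\in B_g$, the $g$-coordinate of $\Lambda(x)j_g(y)$ is $\sum_hx_h\,j_g(y)_{h^{-1}g}=x_ey$. Hence $\langle j_g(y),\Lambda(x)j_g(y)\rangle=y^*E_X(x)y$.
\item Both sides are norm continuous in $x$, so $\langle bj_g(y),bj_g(y)\rangle=y^*E_X(b^*b)y$ for every $b\in B_X$.
\item If $E_X(b^*b)=0$, then $bj_g(y)=0$ for all $g$ and all $y\in B_g$.
\item These vectors span the finitely supported sections, which are dense in $\ell^2(\mathcal B)$. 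Since $b$ is bounded, $b=0$.
\end{enumerate}
With this replacement your direct route is a complete proof of the fact the paper imports from Exel's book. It uses the same mechanism as the book, so what it buys is self-containment rather than a new idea.
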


\begin{proof}
Let $\mathcal B=\{B_g\}_{g\in G}$, with $B_g=D_g\delta_g$, be the
semidirect-product Fell bundle of the partial action $\{\Theta_g\}_{g\in G}$, so
that $B_X=C_r^*(\mathcal B)$. The coefficient map at the identity fibre of a
reduced cross-sectional algebra extends to a faithful conditional expectation
$E_X\colon C_r^*(\mathcal B)\to B_e=D$
\cite[Propositions~17.13 and~19.3]{ExelBook2017}. For later use we record the resulting formula on $\mathcal B_X^{\mathrm{alg}}$.
If $z=\sum_{g\in F}f_g\delta_g$, the product $(f_g\delta_g)^*(f_h\delta_h)$
carries the label $g^{-1}h$, which equals $e$ only for $g=h$, in which case it
is $\Theta_{g^{-1}}(|f_g|^2)\delta_e$. Hence
\begin{equation}\label{eq:EX-positive-formula}
        E_X(z^*z)=\sum_{g\in F}\Theta_{g^{-1}}(|f_g|^2),
\end{equation}
a sum of positive elements of $D$.
\end{proof}
The cocycle $c$ induces a dynamics on $B_X$ as follows. For each $t\in\R$,
multiplying the fibre $B_g=D_g\delta_g$ by the unimodular scalar $e^{itc(g)}$
respects the products $B_gB_h\subseteq B_{gh}$ and the involutions
$B_g^*=B_{g^{-1}}$, because $c$ is a group homomorphism. These fibre maps
therefore form an automorphism of the Fell bundle $\mathcal B$, which induces a
$*$-automorphism $\widehat\sigma_t$ of $B_X=C^*_r(\mathcal B)$ satisfying
$$
        \widehat\sigma_t(f\delta_g)=e^{itc(g)}f\delta_g.
$$
The group law $\widehat\sigma_s\circ\widehat\sigma_t=\widehat\sigma_{s+t}$ is
clear on each fibre. On $\mathcal B_X^{\mathrm{alg}}$ each orbit
$t\mapsto\widehat\sigma_t(x)$ is a finite sum of terms $e^{itc(g)}f\delta_g$ and
so is continuous, and this extends to all of $B_X$ because the
$\widehat\sigma_t$ are isometric.

The following proposition is the measurable partial-action analogue of the
usual construction of KMS states from conformal measures, and may be viewed as
a direct verification, in the present model, of the graded-algebra criterion in
\cite[Proposition~1.6(i)]{ExelLaca2003}. For the groupoid and Fell-bundle
formulations of this circle of ideas, see
\cite{Neshveyev2013,AfsarSims2021}.

\begin{proposition}\label{prop:conformal-kms}
The state
$$
        \omega_\mu(x)=\int_X E_X(x)d\mu
$$
on $B_X$ is a $\KMS_1$ state for $\widehat\sigma$.
\end{proposition}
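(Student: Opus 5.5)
First I check that $\omega_\mu$ is a state. Positivity comes from \eqref{eq:EX-positive-formula}: $E_X(z^*z)$ is a finite sum of positive elements of $D$, so its integral is nonnegative on $\mathcal B_X^{\mathrm{alg}}$. This extends to $B_X$ by continuity of $E_X$ (Proposition~\ref{prop:regular-expectation}), because $\omega_\mu$ is the composition of the conditional expectation with the normal state $f\mapsto\int f\,d\mu$ on $D$. Unitality holds because $E_X(1\delta_e)=1$ and $\mu(X)=1$.

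For the KMS condition, the plan is to verify it on the dense $\widehat\sigma$-invariant $*$-subalgebra $\mathcal B_X^{\mathrm{alg}}$. Every element of it is entire analytic, with $\widehat\sigma_z(f\delta_g)=e^{izc(g)}f\delta_g$, so $\widehat\sigma_{i}(f\delta_g)=e^{-c(g)}f\delta_g$. By bilinearity it suffices to take $a=f\delta_g$ and $b=h\delta_k$ with $f\in D_g$, $h\in D_k$, and to prove
$$
\omega_\mu(ab)=e^{-c(g)}\,\omega_\mu(ba).
$$
Both sides vanish unless $gk=e$, since $E_X$ kills every fibre except $B_e$. So we may take $k=g^{-1}$ and $h\in D_{g^{-1}}$.

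In that case the product rule gives
$$
ab=\Theta_g\bigl(\Theta_{g^{-1}}(f)h\bigr)\delta_e=f\cdot\Theta_g(h)\,\delta_e,
$$
using that $\Theta_g$ is multiplicative on $D_{g^{-1}}$ and $\Theta_g\Theta_{g^{-1}}(f)=f$. This is the function $x\mapsto f(x)h(\alpha_g^{-1}x)$, supported in $X_g$. Likewise
$$
ba=\Theta_{g^{-1}}\bigl(\Theta_g(h)f\bigr)\delta_e=h\cdot\Theta_{g^{-1}}(f)\,\delta_e,
$$
which is the function $y\mapsto h(y)f(\alpha_g y)$, supported in $X_{g^{-1}}$. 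Now apply \eqref{eq:conformal-change-variables} with $F=f\cdot\Theta_g(h)$:
$$
\int_{X_g}f(x)h(\alpha_g^{-1}x)\,d\mu=e^{-c(g)}\int_{X_{g^{-1}}}f(\alpha_g y)h(y)\,d\mu(y).
$$
This is exactly the required identity.

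The main obstacle is the passage from the algebraic level to $B_X$. I will use that the KMS condition need only be checked on a dense invariant $*$-subalgebra of entire analytic elements, which is the definition the paper recalls at the start of Section~\ref{sec:corner} (cf.\ \cite[Section~5.3]{BR2}). The remaining technical point is the measure-theoretic bookkeeping: the products of representatives must agree almost everywhere with the crossed-product multiplication. This is guaranteed by nonsingularity of $\alpha_g$ (Lemma~\ref{lem:measurable-partial-action}) and the domain identities \eqref{eq:partial-domains}.
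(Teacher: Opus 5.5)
Your proposal is correct and follows the paper's proof essentially step for step. You establish positivity via \eqref{eq:EX-positive-formula} and continuity of $E_X$, check the KMS identity on monomials $f\delta_g$, $h\delta_k$ of $\mathcal B_X^{\mathrm{alg}}$, note that both sides vanish unless $k=g^{-1}$, and obtain the factor $e^{-c(g)}$ from the change of variables \eqref{eq:conformal-change-variables}; your explicit computation of the identity coefficients through the multiplicativity of $\Theta_g$ is only a more detailed rendering of the same step.
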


\begin{proof}
First, $\omega_\mu$ is a state: it is positive because
$\omega_\mu(z^*z)=\int_X E_X(z^*z)d\mu\ge0$ by
\eqref{eq:EX-positive-formula} and continuity of $E_X$, and
$\omega_\mu(1)=\mu(X)=1$. Therefore it suffices to verify the KMS identity on
$\mathcal B_X^{\rm alg}$, which is a dense $*$-subalgebra of entire analytic
elements for $\widehat\sigma$, and by linearity it suffices to do so on
monomials. So let $a=f\delta_g$ and $b=h\delta_k$, with $f\in D_g$ and
$h\in D_k$ regarded as extended by zero outside their domains.

Suppose first that $gk\ne e$. Then neither $ab$ nor $b\widehat\sigma_i(a)$ has
an identity coefficient, so $E_X(ab)=E_X(b\widehat\sigma_i(a))=0$ and both
sides of the KMS identity vanish after applying $\omega_\mu$.

Suppose now that $k=g^{-1}$. We compute the two identity coefficients. By the
product formula, the identity coefficient of $(f\delta_g)(h\delta_{g^{-1}})$ is
the function on $X_g$ given by
$$
        x \mapsto f(x)h(\alpha_g^{-1}x),
$$
while, since $\widehat\sigma_i(f\delta_g)=e^{-c(g)}f\delta_g$, the identity
coefficient of $(h\delta_{g^{-1}})\widehat\sigma_i(f\delta_g)$ is the function
on $X_{g^{-1}}$ given by
$$
        y\mapsto e^{-c(g)}h(y)f(\alpha_g y).
$$
The two are related by the change of variables $x=\alpha_g y$, and the
conformality of $\mu$ (Lemma~\ref{lem:conformal-word}, in the form
\eqref{eq:conformal-change-variables}) yields exactly the factor $e^{-c(g)}$.
Hence
$$
        \omega_\mu(ab)
        =\int_{X_g} f(x)h(\alpha_g^{-1}x)d\mu(x)
        =e^{-c(g)}\int_{X_{g^{-1}}} f(\alpha_g y)h(y)d\mu(y)
        =\omega_\mu\left(b\widehat\sigma_i(a)\right).
$$
This proves the KMS identity on $\mathcal B_X^{\rm alg}$, and since this
algebra is an analytic core for $\widehat\sigma$, the identity extends to
$B_X$.
\end{proof}

We have produced a $\KMS_1$ state for $\widehat\sigma$ on $B_X$, and it remains
to pull it back to $\Omn$. Since any KMS state composed with a
unital equivariant $*$-homomorphism is again a KMS state at the same inverse
temperature, by Proposition~\ref{prop:universal} it suffices to exhibit a tame
family in $B_X$ satisfying the defining relations.

\begin{proposition}\label{prop:branch-representation}
The elements $S_i=1_{P_i}\delta_{a_i}$ and $T_j=1_{R_j}\delta_{b_j}$ form a
tame family in $B_X$ satisfying the defining relations of $\Omn$.
Hence there is a unital equivariant representation
$\pi_X:\Omn\to B_X$.
\end{proposition}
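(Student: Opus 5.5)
The plan is to verify the relations \eqref{rel:sorth}--\eqref{rel:pq} by direct computation with the product and involution formulas of $\mathcal B_X^{\rm alg}$. The one non-formal point, tameness, will be handled by describing all words in the generators as single monomials of the form $1_E\delta_g$.

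\textbf{Step 1: the relations.}
First note that $S_i$ is well defined: $1_{P_i}\in D_{a_i}=L^\infty(X_{a_i})$, since $X_{a_i}=\operatorname{ran}\theta_i=P_i$. Similarly $1_{R_j}\in D_{b_j}$. Using
\[
(f\delta_g)^*=\Theta_{g^{-1}}(\overline f)\delta_{g^{-1}},
\]
we get $S_i^*=1_Q\delta_{a_i^{-1}}$ and $T_j^*=1_Q\delta_{b_j^{-1}}$. The product formula then gives the following.
\begin{itemize}
\item $S_i^*S_i=\Theta_{a_i^{-1}}(1_{P_i})\delta_e=1_Q\delta_e$, and likewise $T_j^*T_j=1_Q\delta_e$.
\item $S_iS_i^*=1_{P_i}\delta_e$ and $T_jT_j^*=1_{R_j}\delta_e$.
\item For $i\neq k$, $S_i^*S_k=\Theta_{a_i^{-1}}(1_{P_i}1_{P_k})\delta_{a_i^{-1}a_k}=0$, because $P_i\cap P_k$ is null. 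The same argument gives $T_j^*T_\ell=0$ for $j\ne\ell$.
\end{itemize}
Both partitions of $P$ then give $\sum_iS_iS_i^*=1_P\delta_e=\sum_jT_jT_j^*$. Finally $p=1_P\delta_e$ and $q=1_Q\delta_e$ satisfy $pq=0$ and $p+q=1$, because $X=P\sqcup Q$ and $1\delta_e$ is the unit of $B_X$.

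\textbf{Step 2: tameness.}
I will show by induction on word length that every product of letters from $\{S_i,T_j,S_i^*,T_j^*\}$ has the form $1_E\delta_g$, with $E\subseteq X_g$ measurable. The key computation is
\[
(1_E\delta_g)(1_F\delta_h)
=\Theta_g\bigl(\Theta_{g^{-1}}(1_E)1_F\bigr)\delta_{gh}
=1_{\alpha_g(\alpha_g^{-1}(E)\cap F)}\delta_{gh}.
\]
Here Lemma~\ref{lem:measurable-partial-action} is used to see that the resulting set lies in $X_{gh}$: the set $\alpha_g(\alpha_g^{-1}(E)\cap F)$ is contained in $\alpha_g(X_{g^{-1}}\cap X_h)=X_g\cap X_{gh}$ by \eqref{eq:partial-domains}.

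Next compute
\[
(1_E\delta_g)^*(1_E\delta_g)=1_{\alpha_g^{-1}(E)}\delta_e,
\]
which is a projection in $D$. Hence every word is a partial isometry, and the family is tame. This bookkeeping is the main obstacle. The point to watch is that coefficients stay characteristic functions of measurable sets modulo null sets. That holds because each $\Theta_g$ is a normal $*$-isomorphism, so it sends projections to projections, and products of projections in the commutative algebra $D$ are again projections.

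\textbf{Step 3: the representation and equivariance.}
Proposition~\ref{prop:universal} now yields a unital $*$-homomorphism $\pi_X\colon\Omn\to B_X$ with $\pi_X(s_i)=S_i$ and $\pi_X(t_j)=T_j$. For equivariance, compute
\[
\widehat\sigma_t(S_i)=e^{itc(a_i)}S_i=m^{it}S_i,
\qquad
\widehat\sigma_t(T_j)=n^{it}T_j.
\]
So $\pi_X\circ\sigma_t$ and $\widehat\sigma_t\circ\pi_X$ agree on the generators. Since both are $*$-homomorphisms and the generators generate $\Omn$, the two maps are equal.
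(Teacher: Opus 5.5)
Your proof is correct and follows essentially the same route as the paper. You verify the relations from the initial and range projections $1_Q$, $1_{P_i}$, $1_{R_j}$ and the disjointness of the two partitions, get tameness from the closure of the monomials $1_E\delta_g$ under products together with $(1_E\delta_g)^*(1_E\delta_g)=1_{\alpha_g^{-1}(E)}\delta_e$, and then invoke the universal property and check equivariance on generators; your extra remark that $\alpha_g(\alpha_g^{-1}(E)\cap F)\subseteq X_g\cap X_{gh}$ by \eqref{eq:partial-domains} makes explicit a point the paper leaves implicit.
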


\begin{proof}
The initial projection of $S_i$ is $1_Q$, because $\alpha_{a_i}$ has domain
$Q$, and the range projection is $1_{P_i}$. Similarly $T_j^*T_j=1_Q$ and
$T_jT_j^*=1_{R_j}$. Since the $P_i$ are disjoint and the $R_j$ are disjoint,
$$
        S_i^*S_k=0\ (i\ne k),
        \qquad
        T_j^*T_\ell=0\ (j\ne \ell),
$$
and
$$
        \sum_i S_iS_i^*=1_P=\sum_j T_jT_j^*,
        \qquad
        1_P1_Q=0,
        \qquad
        1_P+1_Q=1.
$$
It remains to verify tameness. If $E\subseteq X_g$ and $F\subseteq X_h$ are
measurable, then the crossed-product formulas give
\begin{equation}\label{eq:monomial-product}
(1_E\delta_g)(1_F\delta_h)
 =1_{\alpha_g(\alpha_g^{-1}(E)\cap F)}\delta_{gh},
\qquad
(1_E\delta_g)^*
 =1_{\alpha_g^{-1}(E)}\delta_{g^{-1}}.
\end{equation}
Thus monomials with characteristic-function coefficients are closed under
products and adjoints. Moreover,
\begin{equation}\label{eq:monomial-pi}
(1_E\delta_g)^*(1_E\delta_g)
 =1_{\alpha_g^{-1}(E)}\delta_e,
\qquad
(1_E\delta_g)(1_E\delta_g)^*
 =1_E\delta_e,
\end{equation}
so every such monomial is a partial isometry. Since $S_i,T_j,S_i^*,T_j^*$ are
all monomials of this form, every word in these elements is either zero or a
partial isometry. Hence the family $\{S_i,T_j\}$ satisfies the
Ara--Exel--Katsura relations and is tame. By Proposition~\ref{prop:universal}
there is therefore a unique unital $*$-homomorphism
$\pi_X:\Omn\to B_X$ with $\pi_X(s_i)=S_i$ and $\pi_X(t_j)=T_j$.
Since $S_i$ has cocycle energy $\log m$ and $T_j$ has cocycle energy $\log n$,
this representation is equivariant for the normalized dynamics.
\end{proof}

Before pulling the state back, we record what the model looks like in the
corner. The two families of maps $\theta_i,\eta_j$, which go from $Q$ to $P$,
combine into $mn$ partial bijections \emph{of $Q$}, and the relations
\eqref{eq:corner-relations} appear as two families of partitions of $Q$.

\begin{proposition}\label{prop:corner-model}
Put $Q_{ij}=\theta_i^{-1}(P_i\cap R_j)$ and
$\beta_{ij}=\theta_i^{-1}\circ\eta_j$, a bimeasurable partial bijection of $Q$
with
$$
        \operatorname{dom}\beta_{ij}=\eta_j^{-1}(P_i\cap R_j),
        \qquad
        \operatorname{ran}\beta_{ij}=Q_{ij}.
$$
Then $\beta_{ij}=\alpha_{a_i^{-1}b_j}$ and
\begin{equation}\label{eq:corner-model-generator}
        \pi_X(v_{ij})=1_{Q_{ij}}\delta_{a_i^{-1}b_j},
        \qquad
        \pi_X(v_{ij}v_{ij}^*)=1_{Q_{ij}}\delta_e,
        \qquad
        \pi_X(v_{ij}^*v_{ij})=1_{\operatorname{dom}\beta_{ij}}\delta_e .
\end{equation}
Moreover $\mu(\beta_{ij}(E))=\frac mn\mu(E)$ for measurable
$E\subseteq\operatorname{dom}\beta_{ij}$, and
$$
        \mu(Q_{ij})=m\mu(P_i\cap R_j),
        \qquad
        \mu(\operatorname{dom}\beta_{ij})=n\mu(P_i\cap R_j),
$$
so that for each fixed $i$ the sets $\{Q_{ij}\}_{j}$ partition $Q$, and for
each fixed $j$ the sets $\{\operatorname{dom}\beta_{ij}\}_{i}$ partition $Q$.
\end{proposition}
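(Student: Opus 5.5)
The plan is to compute everything directly in $B_X$ with the monomial product and adjoint formulas \eqref{eq:monomial-product} from the proof of Proposition~\ref{prop:branch-representation}, together with the conformality Lemma~\ref{lem:conformal-word}.

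\textbf{Step 1: the partial bijection $\beta_{ij}$ and the word $a_i^{-1}b_j$.} The word $a_i^{-1}b_j$ is reduced, so by definition $\alpha_{a_i^{-1}b_j}=\theta_i^{-1}\circ\eta_j$ on its maximal natural domain, which is $\beta_{ij}$. That domain consists of those $y\in Q$ with $\eta_j(y)\in\operatorname{dom}\theta_i^{-1}=P_i$. Since $\eta_j(y)\in R_j$ automatically, the domain is $\eta_j^{-1}(P_i\cap R_j)$. The range is $\theta_i^{-1}$ applied to $\eta_j(\operatorname{dom})=P_i\cap R_j$, namely $Q_{ij}$.

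\textbf{Step 2: the formulas \eqref{eq:corner-model-generator}.} By \eqref{eq:monomial-product}, $S_i^*=1_{\theta_i^{-1}(P_i)}\delta_{a_i^{-1}}=1_Q\delta_{a_i^{-1}}$. Applying the product formula with $E=Q\subseteq X_{a_i^{-1}}$ and $F=R_j$ gives
\[
\pi_X(v_{ij})=S_i^*T_j=1_{\alpha_{a_i^{-1}}(\alpha_{a_i^{-1}}^{-1}(Q)\cap R_j)}\delta_{a_i^{-1}b_j}.
\]
Here $\alpha_{a_i^{-1}}^{-1}(Q)=\theta_i(Q)=P_i$, so the coefficient is $1_{\theta_i^{-1}(P_i\cap R_j)}=1_{Q_{ij}}$. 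The remaining two identities then follow from \eqref{eq:monomial-pi} with $E=Q_{ij}$, $g=a_i^{-1}b_j$, together with $\alpha_g^{-1}(Q_{ij})=\operatorname{dom}\beta_{ij}$ from Step~1.

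\textbf{Step 3: measures.} Lemma~\ref{lem:conformal-word} with $c(a_i^{-1}b_j)=\log n-\log m$ gives $\mu(\beta_{ij}(E))=\frac mn\mu(E)$. Alternatively, compose \eqref{eq:conformal-basic}: $\eta_j$ scales by $1/n$ and $\theta_i^{-1}$ scales by $m$. The same two scalings give $\mu(Q_{ij})=m\,\mu(P_i\cap R_j)$ and $\mu(\eta_j^{-1}(P_i\cap R_j))=n\,\mu(P_i\cap R_j)$.

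\textbf{Step 4: the partitions.} For fixed $i$, the sets $P_i\cap R_j$ partition $P_i$ as $j$ varies, and $\theta_i^{-1}$ is a bijection $P_i\to Q$, so the $Q_{ij}$ partition $Q$. Symmetrically, for fixed $j$, the sets $P_i\cap R_j$ partition $R_j$, and $\eta_j^{-1}$ carries $R_j$ bijectively onto $Q$. This could also be read off from the corner relations \eqref{eq:corner-relations} applied through $\pi_X$, but the set-theoretic argument is cleaner.

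I do not expect a serious obstacle. The only care needed is in Step~2: tracking the domains in the partial-crossed-product formula, and recalling that equalities hold modulo null sets because the representatives were fixed on conull sets.
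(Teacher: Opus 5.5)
Your proposal is correct and follows the paper's proof essentially step for step. You compute $S_i^*T_j$ with the monomial product formula, obtain the range and source projections from \eqref{eq:monomial-pi}, get the measures from Lemma~\ref{lem:conformal-word} and \eqref{eq:conformal-basic}, and read off the partitions set-theoretically. The only difference is that you check the domain and range of $\beta_{ij}=\alpha_{a_i^{-1}b_j}$ explicitly at the start, which the paper leaves implicit.
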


\begin{proof}
By \eqref{eq:monomial-product}, $S_i^*=1_{\theta_i^{-1}(P_i)}\delta_{a_i^{-1}}
=1_Q\delta_{a_i^{-1}}$, and
$$
        \pi_X(v_{ij})=S_i^*T_j
        =1_{\alpha_{a_i^{-1}}(\alpha_{a_i}(Q)\cap R_j)}\delta_{a_i^{-1}b_j}
        =1_{\theta_i^{-1}(P_i\cap R_j)}\delta_{a_i^{-1}b_j},
$$
which is the first identity in \eqref{eq:corner-model-generator}; the other two
follow from \eqref{eq:monomial-pi} together with
$$\alpha_{a_i^{-1}b_j}=\vartheta_{a_i^{-1}}\circ\vartheta_{b_j}
=\theta_i^{-1}\circ\eta_j=\beta_{ij},$$ whose inverse carries $Q_{ij}$ onto
$\eta_j^{-1}(P_i\cap R_j).$ The scaling factor is
$e^{-c(a_i^{-1}b_j)}=e^{-\varepsilon}=m/n$ by
Lemma~\ref{lem:conformal-word}, and the two displayed measures follow from
\eqref{eq:conformal-basic}. Finally $$\bigsqcup_jQ_{ij}
=\theta_i^{-1}\big(\bigsqcup_j(P_i\cap R_j)\big)=\theta_i^{-1}(P_i)=Q$$ and
$\bigsqcup_i\operatorname{dom}\beta_{ij}=\eta_j^{-1}(R_j)=Q$.
\end{proof}

\begin{theorem}\label{thm:existence-asymmetric}
If $m,n\ge 1$, the normalized system $(\Omn,\sigma)$
admits a $\KMS_1$ state, namely $\varphi_X=\omega_\mu\circ\pi_X$. Equivalently,
writing $\nu=2\mu|_Q$ for the normalized restriction of $\mu$ to $Q$, the
system $(A,\sigma^A)$ admits the $\KMS_1$ state
$$
        \psi_X(x)=\int_Q E_X\big(\pi_X(x)\big)d\nu,
        \qquad x\in A .
$$
\end{theorem}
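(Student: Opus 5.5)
The plan is to assemble the pieces already constructed in this subsection. First, a conformal branching model exists for every $m,n\ge1$. Take $X=[0,1]$ with Lebesgue measure, $P=[0,\tfrac12)$ and $Q=[\tfrac12,1]$. Cut $P$ into $m$ intervals of length $\tfrac1{2m}$ to get the $P_i$, and into $n$ intervals of length $\tfrac1{2n}$ to get the $R_j$. The affine maps $Q\to P_i$ and $Q\to R_j$ scale measure by exactly $\tfrac1m$ and $\tfrac1n$, so they satisfy \eqref{eq:conformal-basic}; alternatively invoke the isomorphism theorem as in the text. Nothing in Lemmas~\ref{lem:measurable-partial-action} and~\ref{lem:conformal-word}, or in Propositions~\ref{prop:regular-expectation}, \ref{prop:conformal-kms} and~\ref{prop:branch-representation}, used $m\ne n$ or $m,n\ge2$, so those results apply verbatim.

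Next, the main step. By Proposition~\ref{prop:conformal-kms}, $\omega_\mu$ is a $\KMS_1$ state for $\widehat\sigma$ on $B_X$. By Proposition~\ref{prop:branch-representation}, $\pi_X$ is unital and intertwines $\sigma_t$ with $\widehat\sigma_t$, since both act on generators by the same scalars $m^{it}$ and $n^{it}$. I will verify the general fact that $\omega\circ\pi$ is $\KMS_\beta$ when $\omega$ is $\KMS_\beta$ and $\pi$ is unital and equivariant. Equivariance carries entire analytic elements to entire analytic elements, with $\pi(\sigma_z(a))=\widehat\sigma_z(\pi(a))$; this is the uniqueness of analytic continuation, checked on the real axis. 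The KMS identity then transfers directly on the dense $*$-algebra of $\sigma$-analytic elements, for instance the polynomial $*$-algebra in the generators, whose words are homogeneous. Hence $\varphi_X=\omega_\mu\circ\pi_X$ is $\KMS_1$, and it is a state because $\pi_X(1)=1$.

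Finally, the corner formula. Theorem~\ref{thm:corner-reduction} at $\beta=1$ has $Z_1=2$, so $R_1(\varphi_X)=2\varphi_X|_A$ is a $\KMS_1$ state of $(A,\sigma^A)$. It remains to identify it with $\psi_X$. For $x\in A$ we have $\pi_X(x)=1_Q\pi_X(x)1_Q$. The conditional expectation $E_X$ is $D$-bimodular, so $E_X(\pi_X(x))=1_QE_X(\pi_X(x))1_Q$ is supported in $Q$. Therefore
\[
2\int_XE_X(\pi_X(x))\,d\mu=\int_QE_X(\pi_X(x))\,d(2\mu|_Q),
\]
which is $\psi_X(x)$.

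I expect no serious obstacle. The only points needing care are the transfer of the KMS condition through $\pi_X$, namely the existence of a dense analytic core mapped into the analytic elements, and the bimodularity of $E_X$ over $D$, which holds on $\mathcal B_X^{\rm alg}$ by \eqref{eq:EX-finite} and extends to $B_X$ by continuity.
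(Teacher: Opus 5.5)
Your proposal is correct and follows the paper's proof. It combines Propositions~\ref{prop:conformal-kms} and~\ref{prop:branch-representation} through the fact that KMS states pull back along unital equivariant $*$-homomorphisms, then uses $\pi_X(A)\subseteq 1_QB_X1_Q$ together with $R_1(\varphi)=2\varphi|_A$ to obtain the corner formula; your explicit interval model, the check on the homogeneous polynomial core, and the $D$-bimodularity of $E_X$ just spell out details the paper leaves implicit.
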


\begin{proof}
Combine Propositions~\ref{prop:conformal-kms} and
\ref{prop:branch-representation}, noting that a KMS state composed with a
unital equivariant $*$-homomorphism is again a KMS state at the same inverse
temperature. For the second statement, $\pi_X(A)\subseteq 1_QB_X1_Q$ by
Proposition~\ref{prop:corner-model}, so $E_X(\pi_X(x))$ is supported in $Q$ for
$x\in A$ and $\psi_X=2\varphi_X|_A=R_1(\varphi_X)$ is a $\KMS_1$ state of
$(A,\sigma^A)$ by \eqref{eq:R1}.
\end{proof}

\subsection{The polytope and non-uniqueness}
\label{sec:nonunique}

Throughout this subsection, we assume only that $m,n\ge 2$.
Let
\begin{equation}\label{eq:transport}
\mathcal{P}:=\left\{ C=(c_{ij})_{ij}\in M_{m\times n}(\mathbb{R}):
        c_{ij}\ge0, \ 
        \sum_j c_{ij}=\frac1m, \
        \sum_i c_{ij}=\frac1n \right\},
\end{equation}
and note that the entries of such a matrix sum to one. We shall realize
$c_{ij}/2$ as the measure of the overlap between the $i$th $s$-range and the
$j$th $t$-range. Accordingly, set
$$
        f_{ij}=t_jt_j^*s_is_i^*t_jt_j^* .
$$
This element is positive, and in the tame algebra it coincides with the overlap
projection $s_is_i^*t_jt_j^*$, the two range projections commuting (see
Remark~\ref{rem:inverse-semigroup}). Moreover, by Lemma~\ref{lem:corner-elementary},
\begin{equation}\label{eq:f-vs-v}
        f_{ij}=s_i(v_{ij}v_{ij}^*)s_i^*,
        \qquad
        v_{ij}v_{ij}^*=s_i^*f_{ij}s_i .
\end{equation}

\begin{lemma}\label{lem:dictionary}
Let $\varphi$ be a $\KMS_1$ state of $(\Omn,\sigma)$, let
$\psi=2\varphi|_A$ be the corresponding $\KMS_1$ state of $(A,\sigma^A)$, and
put $c_{ij}=2\varphi(f_{ij})$. Then
\begin{equation}\label{eq:dictionary}
        \psi\big(v_{ij}v_{ij}^*\big)=mc_{ij},
        \qquad
        \psi\big(v_{ij}^*v_{ij}\big)=nc_{ij} .
\end{equation}
\end{lemma}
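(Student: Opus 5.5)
The plan is to prove the two identities in \eqref{eq:dictionary} separately. The first comes from the $\KMS_1$ condition applied to $s_i$, with $f_{ij}$ conjugated into the corner; the second comes from Lemma~\ref{lem:basic-kms} applied to $v_{ij}$ inside $A$.

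For the first identity, by \eqref{eq:f-vs-v} we have $v_{ij}v_{ij}^*=s_i^*f_{ij}s_i$, which lies in $A$, so
$$\psi(v_{ij}v_{ij}^*)=2\varphi(s_i^*f_{ij}s_i).$$
We apply the $\KMS_1$ identity for $\varphi$ with $a=s_i$ and $b=f_{ij}s_i^*$... Concretely, we use $\varphi(xy)=\varphi(y\sigma_{i}(x))$ with $x=s_i^*$, which is entire analytic with $\sigma_z(s_i^*)=m^{-iz}s_i^*$, so $\sigma_{i}(s_i^*)=m\,s_i^*$. Taking $y=f_{ij}s_i$, this gives
$$\varphi(s_i^*f_{ij}s_i)=m\,\varphi(f_{ij}s_is_i^*).$$
Since $f_{ij}\le s_is_i^*$ (the range projections commute, Remark~\ref{rem:inverse-semigroup}), we have $f_{ij}s_is_i^*=f_{ij}$. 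Hence
$$\psi(v_{ij}v_{ij}^*)=2m\varphi(f_{ij})=mc_{ij}.$$
The analytic-element technicalities are covered because monomials in the generators are entire analytic with scalar orbits; the KMS identity for such elements follows from the dense-core version by the standard extension in \cite[Section~5.3]{BR2}.

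For the second identity, we apply Lemma~\ref{lem:basic-kms} inside $(A,\sigma^A)$ at $\beta=1$ to $v=v_{ij}^*$, which has energy $E=-\varepsilon$. This gives
$$\psi(v_{ij}^*v_{ij})=e^{\varepsilon}\psi(v_{ij}v_{ij}^*)=\tfrac nm\, mc_{ij}=nc_{ij}.$$
Alternatively, one can compute directly in $\Omn$ using $v_{ij}^*v_{ij}=t_j^*f_{ij}t_j$ and the same KMS argument with $t_j$ and the factor $n$; this symmetric route avoids going through the corner dynamics. The identity $t_j^*f_{ij}t_j=t_j^*s_is_i^*t_j$ itself uses $f_{ij}\le t_jt_j^*$ together with $t_j=t_jt_j^*t_j$.

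The only step needing care is justifying the KMS identity for the specific pair $(s_i^*, f_{ij}s_i)$, and the order of factors there. If the order came out reversed, the same computation would give the factor $m^{-1}$ instead. That would contradict Corollary~\ref{cor:normalization}, since summing over $j$ must give $\psi(q)=1=m\sum_jc_{ij}$, which is consistent with $\sum_jc_{ij}=2\varphi(s_is_i^*)=1/m$. This sum serves as the sanity check fixing the convention.
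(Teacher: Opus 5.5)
Your proof is correct. The first identity follows the paper's argument: the paper takes $b=t_jt_j^*s_i$, which equals your $f_{ij}s_i$ because the range projections commute. For the second identity you apply Lemma~\ref{lem:basic-kms} in $(A,\sigma^A)$ to $v_{ij}^*$ with energy $-\varepsilon$, whereas the paper repeats the computation with $a=t_j^*$, $b=s_is_i^*t_j$ (the alternative you also mention); both routes are valid. You should delete the abandoned opening with $a=s_i$, $b=f_{ij}s_i^*$, which would only produce $\varphi(s_if_{ij}s_i^*)=0$.
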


\begin{proof}
Since $\sigma_z(s_i^*)=m^{-iz}s_i^*$ we have $\sigma_i(s_i^*)=ms_i^*$, so the
$\KMS_1$ identity applied to the pair $a=s_i^*$, $b=t_jt_j^*s_i$ gives
$$
        \varphi\big(v_{ij}v_{ij}^*\big)
        =\varphi\big(s_i^*t_jt_j^*s_i\big)
        =\varphi\big(t_jt_j^*s_i\sigma_i(s_i^*)\big)
        =m\varphi\big(t_jt_j^*s_is_i^*\big)=m\varphi(f_{ij}),
$$
and multiplying by $2$ gives the first identity. The second follows in the same
way from $\sigma_i(t_j^*)=nt_j^*$, applied to the pair $a=t_j^*$,
$b=s_is_i^*t_j$.
\end{proof}

\begin{lemma}\label{lem:realize-transport}
For every $C\in\mathcal P$ there are non-atomic probability data as in
Section~\ref{sec:regular-models} with $\mu(P_i\cap R_j)=\tfrac12c_{ij}$;
equivalently, with $\nu(Q_{ij})=mc_{ij}$ in the notation of
Proposition~\ref{prop:corner-model}.
\end{lemma}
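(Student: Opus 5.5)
The construction is a direct one. Take $X=[0,1]$ with Lebesgue measure, put $Q=[\tfrac12,1]$ and $P=[0,\tfrac12)$, and realize the cells $P_i\cap R_j$ first. Choose disjoint intervals $I_{ij}\subseteq P$ of length $\tfrac12c_{ij}$, arranged in any order, so that they tile $P$; this is possible because $\sum_{ij}c_{ij}=1$. Then set
\[
P_i=\bigsqcup_jI_{ij},\qquad R_j=\bigsqcup_iI_{ij}.
\]
The row constraint $\sum_jc_{ij}=\tfrac1m$ gives $\mu(P_i)=\tfrac1{2m}$. The column constraint gives $\mu(R_j)=\tfrac1{2n}$. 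The $P_i$ partition $P$, and so do the $R_j$. Moreover $P_i\cap R_j=I_{ij}$, since distinct cells are disjoint, and this has measure $\tfrac12c_{ij}$. Cells with $c_{ij}=0$ are simply empty, which is harmless.

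Next I produce the maps $\theta_i\colon Q\to P_i$ and $\eta_j\colon Q\to R_j$ satisfying \eqref{eq:conformal-basic}. Each $P_i$ is a finite disjoint union of intervals of total length $\tfrac1{2m}$, and $Q$ has length $\tfrac12$. So $\theta_i$ can be taken piecewise affine with slope $\tfrac1m$: cut $Q$ into consecutive subintervals of lengths $m|I_{ij}|$ and map each affinely onto the corresponding $I_{ij}$. This scales Lebesgue measure by exactly $\tfrac1m$. The map $\eta_j$ is built the same way with slope $\tfrac1n$. Alternatively, I could just invoke the isomorphism theorem cited before \eqref{eq:conformal-basic}, since $P_i$ and $R_j$ are non-null whenever they are required to be ($\mu(P_i)=\tfrac1{2m}>0$). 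Either way, the data form a conformal branching model, and the space is standard and non-atomic.

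Finally, the equivalent formulation follows from Proposition~\ref{prop:corner-model}. There $\mu(Q_{ij})=m\mu(P_i\cap R_j)=\tfrac m2c_{ij}$, so $\nu(Q_{ij})=2\mu(Q_{ij})=mc_{ij}$. Conversely, $\nu(Q_{ij})=mc_{ij}$ forces $\mu(P_i\cap R_j)=\tfrac12c_{ij}$ by the same identity.

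No step here is a real obstacle. The only point needing care is that the partitions must be chosen jointly, by building them from the cells. Choosing $\{P_i\}$ and $\{R_j\}$ independently would not control the overlaps; with the cell construction the overlaps are dictated exactly by $C$.
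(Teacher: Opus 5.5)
Your proposal is correct and follows the paper's proof. Both arguments build $P_i$ and $R_j$ as unions of cells of measure $\tfrac12 c_{ij}$, so that the overlaps are dictated by $C$. Both then obtain the scaling maps and read off $\nu(Q_{ij})=2m\,\mu(P_i\cap R_j)=mc_{ij}$ from Proposition~\ref{prop:corner-model}. The only difference is that you write down explicit piecewise affine maps of slope $\tfrac1m$ and $\tfrac1n$ on intervals, where the paper simply invokes the isomorphism theorem for non-atomic standard probability spaces.
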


\begin{proof}

Let $P$ be a disjoint union of measurable sets $P_{ij}$ with
$\mu(P_{ij})=c_{ij}/2$, and put
$$
        P_i=\bigsqcup_j P_{ij},
        \qquad
        R_j=\bigsqcup_i P_{ij}.
$$
The row and column conditions in \eqref{eq:transport} give $\mu(P_i)=1/(2m)$
and $\mu(R_j)=1/(2n)$. Since $Q$ is non-atomic of measure $1/2$, there are
measurable isomorphisms $Q\to P_i$ and $Q\to R_j$ with the scaling factors
required in \eqref{eq:conformal-basic}. The last assertion is
$\nu(Q_{ij})=2m\mu(P_i\cap R_j)$ from
Proposition~\ref{prop:corner-model}.
\end{proof}

\begin{proposition}\label{prop:distinguish-C}
Let $\varphi_C$ be a $\KMS_1$ state obtained from a branching model with
first-level intersection matrix $C$, and let $\psi_C=2\varphi_C|_A$. Then
$$
        \varphi_C(f_{ij})=\tfrac12c_{ij},
        \qquad
        \psi_C\big(v_{ij}v_{ij}^*\big)=mc_{ij}.
$$
\end{proposition}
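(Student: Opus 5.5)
The plan is to compute $\varphi_C(f_{ij})$ directly in the model $B_X$ via the representation $\pi_X$ of Proposition~\ref{prop:branch-representation}. The second identity will then follow from Lemma~\ref{lem:dictionary}.

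First, $\pi_X(f_{ij})$ is identified. By Proposition~\ref{prop:branch-representation}, $\pi_X(s_is_i^*)=S_iS_i^*=1_{P_i}\delta_e$ and $\pi_X(t_jt_j^*)=1_{R_j}\delta_e$; these are the range projections read off from \eqref{eq:monomial-pi}. Products in the identity fibre are pointwise products in $D$, so
$$\pi_X(f_{ij})=\pi_X(t_jt_j^*s_is_i^*t_jt_j^*)=1_{R_j}1_{P_i}1_{R_j}\delta_e=1_{P_i\cap R_j}\delta_e .$$
Since $E_X$ is the identity on $D\delta_e$, we get
$$\varphi_C(f_{ij})=\omega_\mu(\pi_X(f_{ij}))=\int_X 1_{P_i\cap R_j}\,d\mu=\mu(P_i\cap R_j)=\tfrac12 c_{ij}.$$
The last equality holds because the model was built by Lemma~\ref{lem:realize-transport} with intersection matrix $C$.

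For the corner identity, $\varphi_C$ is a $\KMS_1$ state by Theorem~\ref{thm:existence-asymmetric}. Applying Lemma~\ref{lem:dictionary} with $c_{ij}=2\varphi_C(f_{ij})$, which agrees with the given $C$ by the previous step, yields $\psi_C(v_{ij}v_{ij}^*)=mc_{ij}$.

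As a consistency check, \eqref{eq:corner-model-generator} gives $\pi_X(v_{ij}v_{ij}^*)=1_{Q_{ij}}\delta_e$, so $\psi_C(v_{ij}v_{ij}^*)=\nu(Q_{ij})=2\mu(Q_{ij})=2m\mu(P_i\cap R_j)=mc_{ij}$, using Proposition~\ref{prop:corner-model}.

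There is no real obstacle. The only point requiring care is the identification of the overlap element $f_{ij}$, whether written as $t_jt_j^*s_is_i^*t_jt_j^*$ or as the commuting product, with the indicator of $P_i\cap R_j$. This is immediate because all three factors lie in the commutative identity fibre $D$.
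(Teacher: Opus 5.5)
Your proposal is correct and matches the paper's proof: you identify $\pi_X(f_{ij})=1_{P_i\cap R_j}\delta_e$, integrate against $\mu$ to get $\tfrac12 c_{ij}$, and obtain the corner identity from Lemma~\ref{lem:dictionary}. Your consistency check via $\nu(Q_{ij})$ from \eqref{eq:corner-model-generator} is the alternative route the paper also notes.
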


\begin{proof}
Under $\pi_X$ the projection $s_is_i^*$ becomes $1_{P_i}$ and $t_jt_j^*$
becomes $1_{R_j}$, so that $\pi_X(f_{ij})=1_{R_j}1_{P_i}1_{R_j}=1_{P_i\cap
R_j}$. Since $\varphi_C=\omega_\mu\circ\pi_X$ and $E_X$ is the identity on the
diagonal,
$$
        \varphi_C(f_{ij})=\int_X 1_{P_i\cap R_j}d\mu=\mu(P_i\cap R_j)
        =\tfrac12c_{ij}.
$$
The second formula is Lemma~\ref{lem:dictionary}; alternatively, it is
$\psi_C(v_{ij}v_{ij}^*)=\nu(Q_{ij})=mc_{ij}$ directly from
\eqref{eq:corner-model-generator}.
\end{proof}

The values of a state on the projections $f_{ij}$ are defined for \emph{every}
$\KMS_1$ state, not only for those coming from a branching model, and by
Lemma~\ref{lem:dictionary} they are constrained by the two relations
\eqref{eq:corner-relations}. This yields the following.

\begin{proposition}\label{prop:Phi}
The map $\Phi$ from the $\KMS_1$ simplex of $(\Omn,\sigma)$ to
$\mathcal P$ given by $\Phi(\varphi)=\left(2\varphi(f_{ij})\right)_{ij}$ is
well defined, affine, weak-$*$ continuous and surjective. Moreover
$\dim\mathcal P=(m-1)(n-1)$.
\end{proposition}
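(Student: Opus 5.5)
The plan is to establish, in order, that $\Phi$ lands in $\mathcal P$, that it is affine and continuous, that it is surjective, and finally the dimension count. For well-definedness, let $\varphi\in K_1(\Omn,\sigma)$ and put $c_{ij}=2\varphi(f_{ij})$. Since $f_{ij}$ is a projection (Remark~\ref{rem:inverse-semigroup}), $c_{ij}\ge0$. For the margins I would not pass through the corner at all. Since the range projections commute and $\sum_j t_jt_j^*=p$ with $s_is_i^*\le p$, we have $\sum_j f_{ij}=s_is_i^*p=s_is_i^*$. Corollary~\ref{cor:normalization} then gives $\sum_jc_{ij}=2\varphi(s_is_i^*)=1/m$. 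Symmetrically, $\sum_i f_{ij}=t_jt_j^*$ gives $\sum_ic_{ij}=1/n$. Alternatively, the same identities follow from Lemma~\ref{lem:dictionary} together with \eqref{eq:corner-relations} and $\psi(q)=1$, since $\sum_j mc_{ij}=\psi(q)=1$ and $\sum_i nc_{ij}=1$. This is the sense in which the constraints of $\mathcal P$ are the rescaled corner relations. So $\Phi(\varphi)\in\mathcal P$.

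Affineness and weak-$*$ continuity are immediate, because each coordinate of $\Phi$ is evaluation at the fixed element $2f_{ij}$. Surjectivity comes from the branching models. Given $C\in\mathcal P$, Lemma~\ref{lem:realize-transport} produces a conformal branching model with $\mu(P_i\cap R_j)=c_{ij}/2$. Theorem~\ref{thm:existence-asymmetric} then gives the $\KMS_1$ state $\varphi_C=\omega_\mu\circ\pi_X$, and Proposition~\ref{prop:distinguish-C} gives $\Phi(\varphi_C)=C$. This argument uses no relation between $m$ and $n$, so it covers the diagonal case at $\beta=1$ as well.

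For the dimension, $\mathcal P$ lies in the affine subspace of $M_{m\times n}(\R)$ cut out by the $m+n$ margin equations. These equations have exactly one linear dependency: the sum of all row equations equals the sum of all column equations, both giving total mass $1$. One checks rank $m+n-1$, for instance because the linear maps $E_{ij}-E_{in}-E_{mj}+E_{mn}$ for $i<m$, $j<n$ span the kernel, giving a direction space of dimension $mn-(m+n-1)=(m-1)(n-1)$. The uniform matrix $C_0=(1/(mn))_{ij}$ lies in $\mathcal P$ with all entries strictly positive. Hence a small ball in that affine subspace around $C_0$ stays in $\mathcal P$, and $\dim\mathcal P=(m-1)(n-1)$.

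The only real care needed is in the margin identity $\sum_jf_{ij}=s_is_i^*$. It relies on commutation of the range projections, which tameness provides, so that $f_{ij}=s_is_i^*t_jt_j^*$ and the sum telescopes to $s_is_i^*p$. Everything else is bookkeeping.
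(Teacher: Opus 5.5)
Your proof is correct, and apart from the margin step it is the paper's proof. Surjectivity via Lemma~\ref{lem:realize-transport}, Theorem~\ref{thm:existence-asymmetric} and Proposition~\ref{prop:distinguish-C} is exactly what the paper does, and so is the dimension count via the strictly positive uniform matrix. Your kernel basis $E_{ij}-E_{in}-E_{mj}+E_{mn}$ (these are matrices, not linear maps) supplies the rank count $m+n-1$, which the paper states without proof.

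The difference is in how you get the margins. The paper's argument is your ``alternatively'' route: it moves $f_{ij}$ into the corner with Lemma~\ref{lem:dictionary} and then sums the corner relations \eqref{eq:corner-relations}. Your main route stays in $\Omn$. It uses the commutation of range projections to get $\sum_j f_{ij}=s_is_i^*$ and $\sum_i f_{ij}=t_jt_j^*$, and then needs only Corollary~\ref{cor:normalization}.

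Your route is more elementary: beyond the normalization of the range projections, it uses no KMS manipulation. The paper's choice is deliberate. It shows the constraints of $\mathcal P$ to be the rescaled corner relations, and Lemma~\ref{lem:dictionary} is needed again in Lemma~\ref{lem:vertex}(iii). There it turns $\Phi(\varphi)=V$ into $\psi(v_{ij}v_{ij}^*)=0$ for $(i,j)\in Z$, which is what identifies the fibres with $\KMS_1$ simplices of $A/J_Z$.
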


\begin{proof}
Positivity of $f_{ij}$ gives $2\varphi(f_{ij})\ge0$. Write $\psi=2\varphi|_A$
and $c_{ij}=2\varphi(f_{ij})$. Summing the first identity of
\eqref{eq:dictionary} over $j$, and the second over $i$, the relations
\eqref{eq:corner-relations} and $\psi(q)=1$ give
$$
        m\sum_j c_{ij}=\psi\Big(\sum_j v_{ij}v_{ij}^*\Big)=\psi(q)=1,
        \qquad
        n\sum_i c_{ij}=\psi\Big(\sum_i v_{ij}^*v_{ij}\Big)=\psi(q)=1,
$$
so $\Phi(\varphi)\in\mathcal P$. Each coordinate
$\varphi\mapsto2\varphi(f_{ij})$ is evaluation at a fixed element, hence a
weak-$*$ continuous affine functional; so is $\Phi$. Surjectivity follows from
Lemma~\ref{lem:realize-transport} together with
Proposition~\ref{prop:distinguish-C}, since every $C\in\mathcal P$ equals
$\Phi(\varphi_C)$ for a suitable branching model.

For the dimension, the $mn$ entries are subject to the $m+n$ equations in
\eqref{eq:transport}, one of which is redundant because both families prescribe
total mass one. The uniform matrix $c_{ij}=1/(mn)$ satisfies all the
inequalities strictly, so $\mathcal P$ has nonempty relative interior in the
resulting affine subspace and
$\dim\mathcal P=mn-(m+n-1)=(m-1)(n-1)$.
\end{proof}

We are now able to prove that $\KMS_1$ states are non-unique in the asymmetric case $m\neq n$. We emphasize, however, that this assumption is not required in the following result. Thus, the result also recovers the non-uniqueness in the diagonal case, which is already known.

\begin{corollary}\label{cor:nonuniqueness-asymmetric}
If $m,n\ge2$, the normalized system $(\Omn,\sigma)$
has more than one $\KMS_1$ state.
\end{corollary}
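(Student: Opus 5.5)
The plan is to deduce the corollary directly from Proposition~\ref{prop:Phi}. That proposition says $\Phi$ maps the $\KMS_1$ simplex $K_1(\Omn,\sigma)$ surjectively onto $\mathcal P$. So it suffices to show that $\mathcal P$ contains at least two distinct points. A map cannot send a single point onto a set with two points, so the simplex must then contain at least two states.

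To get two points of $\mathcal P$, I would first use the dimension count $\dim\mathcal P=(m-1)(n-1)$ from Proposition~\ref{prop:Phi}. Since $m,n\ge2$, this number is at least $1$, so $\mathcal P$ is not a singleton. To make this concrete, I would take the uniform matrix $c_{ij}=1/(mn)$ and perturb it along the $2\times2$ pattern with entries $+\delta$ at $(1,1)$ and $(2,2)$ and $-\delta$ at $(1,2)$ and $(2,1)$. This perturbation preserves all row and column sums. For $0<\delta\le 1/(mn)$ all entries stay nonnegative, so both matrices lie in $\mathcal P$ and they differ in the $(1,1)$ entry.

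Next I would pull these back. By Lemma~\ref{lem:realize-transport} and Theorem~\ref{thm:existence-asymmetric}, each of the two matrices $C\ne C'$ is realized by a conformal branching model, giving $\KMS_1$ states $\varphi_C$ and $\varphi_{C'}$. Proposition~\ref{prop:distinguish-C} then gives $\varphi_C(f_{11})=\tfrac12 c_{11}\neq\tfrac12 c'_{11}=\varphi_{C'}(f_{11})$, so the two states are distinct.

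No step is a real obstacle, because everything needed is contained in Proposition~\ref{prop:Phi} and the model construction. The only point to check is that the hypothesis $m,n\ge2$ is used exactly to make the $2\times2$ perturbation pattern fit inside an $m\times n$ matrix. This matches the boundary cases: when $m=1$, $\mathcal P$ is a single point, and uniqueness holds there by Corollary~\ref{cor:corner-boundary}.
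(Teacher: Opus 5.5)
Your argument is correct and is essentially the paper's proof, which simply notes that $\Phi$ surjects the $\KMS_1$ simplex onto $\mathcal P$, a polytope of positive dimension $(m-1)(n-1)$; your explicit $2\times2$ perturbation of the uniform matrix and the separation by $\varphi(f_{11})$ just unpack the surjectivity part of Proposition~\ref{prop:Phi}. One small caveat in your closing aside: the uniqueness in Corollary~\ref{cor:corner-boundary} for $m=1$ requires $n\ge2$, since on $\mathcal O_{1,1}$ every tracial state is a $\KMS_1$ state.
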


\begin{proof}
By Proposition~\ref{prop:Phi} the $\KMS_1$ simplex surjects onto $\mathcal P$,
which has positive dimension $(m-1)(n-1)$ and therefore more than one point.
\end{proof}

\subsection{Fibres over the vertices of the polytope}
\label{sec:fibres}
We continue to assume that $m,n \ge 2$. We first show that the map $\Phi$ from Proposition~\ref{prop:Phi} is not injective, so that distinct $\KMS_1$ states can have the same first-level intersection matrix. Recall that a vertex of a polytope is a point that cannot be expressed as a nontrivial convex combination of two distinct points in the polytope. We focus on the fibres over the vertices of $\mathcal P$, showing that each such fibre contains an extremal, and hence factorial, $\KMS_1$ state. Moreover, we show that each such fibre is affinely homeomorphic to the $\KMS_1$ simplex of a quotient of the corner algebra $A$ by an ideal generated by a certain collection of generators. The description of the remaining fibres is part of the classification problem left open in the introduction.

\begin{proposition}\label{prop:notinjective}
The map $\Phi: K_1(\Omn, \sigma)\to \mathcal{P}$ of Proposition~\ref{prop:Phi} is not
injective.
\end{proposition}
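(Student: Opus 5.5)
The introduction already indicates the argument: split into the case $\max(m,n)\ge3$, handled by convex geometry, and the case $(m,n)=(2,2)$, handled by an explicit example.

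\emph{Case $\max(m,n)\ge3$.} The $\KMS_1$ set $K_1(\Omn,\sigma)$ is a simplex, and $\Phi$ is affine, weak-$*$ continuous and surjective onto $\mathcal P$ by Proposition~\ref{prop:Phi}. Suppose $\Phi$ were injective. Then, being a continuous bijection from a compact space onto a Hausdorff one, it would be an affine homeomorphism. So $\mathcal P$ would be a finite-dimensional Choquet simplex, i.e.\ a genuine simplex of dimension $(m-1)(n-1)$, with exactly $(m-1)(n-1)+1$ facets.

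Now count the facets of $\mathcal P$. Each facet lies in a hyperplane $\{c_{ij}=0\}$, since the equality constraints cut out the affine hull. I claim each such inequality defines a distinct facet when $\max(m,n)\ge3$. To see this, exhibit a point of $\mathcal P$ with $c_{ij}=0$ and all other entries strictly positive. Starting from the uniform matrix, move mass along a $2\times2$ cycle through $(i,j)$ to reach $c_{ij}=0$. The entries elsewhere stay positive as long as the compensating entries remain positive. A cleaner construction: take $c_{ij}=0$ and redistribute row $i$'s mass $1/m$ over the other $n-1$ columns and column $j$'s mass $1/n$ over the other $m-1$ rows, then fill the remaining $(m-1)\times(n-1)$ block by a positive product-type solution with the adjusted margins. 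The adjusted margins are positive because $\max(m,n)\ge3$ and $m,n\ge2$; this should be checked.

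So $\mathcal P$ has $mn$ facets. Compare this with $(m-1)(n-1)+1=mn-m-n+2$: we have $mn>mn-m-n+2$ because $m+n>2$. This is a contradiction, so $\Phi$ is not injective. (Indeed, for $(2,2)$ one gets $4$ versus $2$, but there $\mathcal P$ is a segment and only two of the inequalities define facets, so the counting fails; this is why that case must be treated separately.)

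\emph{Case $(m,n)=(2,2)$.} Here $\mathcal P$ is a segment with vertices $\frac12 I$ and $\frac12\begin{pmatrix}0&1\\1&0\end{pmatrix}$. Use Corollary~\ref{cor:nonunique} with $\beta=1$, $d=1$: the states $\varphi_{1,1,U}$ for $U=(1,1)$ and $U=(-1,1)$ are distinct, since they differ at $s_1^*t_1$. Compute $\Phi$ on both. In $\pi_{1,U}$, $s_is_i^*=p_i$ and $t_jt_j^*=V_jU_jU_j^*V_j^*=p_j$, so $f_{ij}=\delta_{ij}p_i$. The Gibbs state gives $\varphi(p_i)=2^{-1}/(1+2^{0})=1/4$, hence $c_{ij}=\delta_{ij}/2$ for both states, independently of $U$. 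Thus both map to the vertex $\frac12 I$, and $\Phi$ is not injective.

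The main obstacle is the facet count: verifying that every $\{c_{ij}=0\}$ supports a genuine $(\dim\mathcal P-1)$-dimensional face when $\max(m,n)\ge3$. A known alternative is to cite the standard fact that a transportation polytope with positive margins has exactly $mn$ facets unless $m$ or $n$ is $2$ in degenerate configurations. For safety I would give the explicit relative-interior point of each face described above.
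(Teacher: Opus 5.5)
Your proposal is correct and is essentially the paper's proof. It uses the same split: $(m,n)=(2,2)$ is settled by two $d=1$ Gibbs states at $\beta=1$ that differ on some $s_i^*t_i$ yet both have $\Phi$-image $\tfrac12 I$, and $\max(m,n)\ge3$ is settled by showing that $\mathcal P$ has $mn>(m-1)(n-1)+1$ facets, so that an injective $\Phi$ would be an affine homeomorphism of a simplex onto a non-simplex.

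The only divergence is the point with $c_{uv}=0$ and all other entries positive. The paper adds two half-amplitude $2\times2$ cycles through $(u,v)$ to the uniform matrix; a single full-amplitude cycle would also kill the opposite corner. Your block-filling construction also works, but only once the redistribution is taken uniform. Then the adjusted margins $\tfrac1m-\tfrac1{n(m-1)}$ and $\tfrac1n-\tfrac1{m(n-1)}$ are positive exactly when $mn>m+n$, i.e.\ $(m,n)\ne(2,2)$.
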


\begin{proof}
When $(m,n)=(2,2)$ non-injectivity is already visible in
Example~\ref{ex:O22}. Indeed, the states $\varphi_{1,0}$ and $\varphi_{1,\pi}$
there are distinct, since they take the values $\pm\tfrac12$ at $s_2^{*}t_2$,
while in the representation $\pi_{1,U}$ one has $\pi_{1,U}(v_{12})=V_1^{*}V_2U_2=0$
and likewise $\pi_{1,U}(v_{21})=0$, so that both states vanish on $f_{12}$ and
$f_{21}$ and
$$
        \Phi(\varphi_{1,0})=\Phi(\varphi_{1,\pi})
        =\begin{pmatrix}1/2&0\\0&1/2\end{pmatrix}.
$$
We may thus assume that $m\ge3$ or $n\ge3$. In that case non-injectivity follows from the fact that
polytope $\mathcal P$ is not a simplex, whereas the $\KMS_1$ states always
form one.

Write $d=\dim\mathcal P=(m-1)(n-1)$ and recall that a simplex of finite
dimension $d$ has exactly $d+1$ facets, i.e. maximal proper faces. We claim that $\mathcal P$ has $mn$
facets, namely
$$
        F_{uv}=\{C\in\mathcal P:\ c_{uv}=0\},\qquad 1\le u\le m,\ 1\le v\le n.
$$
Each $F_{uv}$ is a face, being the intersection of $\mathcal P$ with the
hyperplane $\{c_{uv}=0\}$. To see that it is a facet we exhibit a point of
$\mathcal P$ at which $c_{uv}=0$ and every other entry is strictly positive;
such a point lies in the relative interior of $F_{uv}$, hence
$\dim F_{uv}=d-1$.

We assume that $n\ge3$, transposing if instead
$m\ge3$. Fix $k\ne u$ and distinct $\ell_1,\ell_2\ne v$, and set
$\omega=1/(mn)$. Starting from the uniform matrix $c_{ij}=\omega$, add for
$r=1,2$ the perturbation supported on $(u,v),(u,\ell_r),(k,v),(k,\ell_r)$ given
by
$$
        c_{uv}\mapsto c_{uv}-\tfrac\omega2,\quad
        c_{u\ell_r}\mapsto c_{u\ell_r}+\tfrac\omega2,\quad
        c_{kv}\mapsto c_{kv}+\tfrac\omega2,\quad
        c_{k\ell_r}\mapsto c_{k\ell_r}-\tfrac\omega2 .
$$
Each perturbation leaves all row and column sums unchanged, so the resulting
matrix $\widehat{C}$ lies in the affine hull of $\mathcal P$. Its entries are
$$
        \widehat{c}_{uv}=0,\quad
        \widehat{c}_{u\ell_1}=\widehat{c}_{u\ell_2}=\tfrac32\omega,\quad
        \widehat{c}_{kv}=2\omega,\quad
        \widehat{c}_{k\ell_1}=\widehat{c}_{k\ell_2}=\tfrac\omega2,
$$
all remaining entries being equal to $\omega$. Hence $\widehat{C}\in\mathcal P$
and $\widehat{c}_{ij}>0$ for $(i,j)\ne(u,v)$, as required. In particular the
$F_{uv}$ are pairwise distinct, since $F_{uv}$ is the only one of them
containing $\widehat{C}$. As $m+n>2$ we have $mn>d+1$, so $\mathcal P$ is not a
simplex.

Suppose now that $\Phi$ were injective. Being a continuous bijection from a
weak-$*$ compact space onto a Hausdorff space, it would be a homeomorphism, and
being affine it would carry the $\KMS_1$ simplex affinely onto $\mathcal P$.
Since the set of $\KMS_1$ states is a simplex, so would be $\mathcal P$; which
we have just excluded.
\end{proof}

We recall two standard notions. A $\KMS_\beta$ state is extremal if it
is an extreme point of the convex set of $\KMS_\beta$ states. A state $\varphi$
is a factor state if the von Neumann algebra
$\pi_\varphi(\Omn)''$ generated by its GNS representation has
trivial centre, and two states are disjoint if their GNS representations
have no nonzero unitarily equivalent subrepresentations. By
\cite[Theorem~5.3.30]{BR2}, for a unital $C^*$-dynamical system the extremal
$\KMS_\beta$ states are exactly the factor states, and two distinct ones are
disjoint.

\begin{theorem}\label{thm:vertices}
For every vertex $V$ of $\mathcal P$ the fibre $\Phi^{-1}(V)$ is a closed face
of the $\KMS_1$ simplex, and hence contains an extremal $\KMS_1$ state
$\varphi_V$ with $\Phi(\varphi_V)=V$. Consequently $\Omn$ admits a
family of pairwise disjoint factorial $\KMS_1$ states, one for each vertex of
$\mathcal P$.
\end{theorem}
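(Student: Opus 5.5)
The plan is to use only three ingredients: convexity of the $\KMS_1$ simplex $K_1:=K_1(\Omn,\sigma)$, affineness and continuity of $\Phi$ (Proposition~\ref{prop:Phi}), and the characterization of extremal $\KMS$ states recalled above from \cite[Theorem~5.3.30]{BR2}.

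\emph{Step 1: the fibre is a closed face.} Let $V$ be a vertex of $\mathcal P$ and set $F_V=\Phi^{-1}(V)$.
\begin{itemize}
\item $F_V$ is nonempty, since $\Phi$ is surjective by Proposition~\ref{prop:Phi}.
\item $F_V$ is weak-$*$ closed, because $\Phi$ is weak-$*$ continuous and $\{V\}$ is closed.
\item $F_V$ is convex, because $\Phi$ is affine.
\item $F_V$ is a face. Suppose $\varphi\in F_V$ and $\varphi=\lambda\varphi_1+(1-\lambda)\varphi_2$ with $\varphi_1,\varphi_2\in K_1$ and $0<\lambda<1$. Then $V=\lambda\Phi(\varphi_1)+(1-\lambda)\Phi(\varphi_2)$, where both $\Phi(\varphi_k)$ lie in $\mathcal P$. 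Since $V$ is a vertex, $\Phi(\varphi_1)=\Phi(\varphi_2)=V$, so $\varphi_1,\varphi_2\in F_V$.
\end{itemize}

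\emph{Step 2: an extremal state in the fibre.} $F_V$ is a nonempty compact convex subset of the locally convex space $\Omn^*$ with the weak-$*$ topology. By Krein--Milman it has an extreme point $\varphi_V$. An extreme point of a face is an extreme point of the ambient convex set, so $\varphi_V$ is extremal in $K_1$.

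\emph{Step 3: factoriality and disjointness.} By \cite[Theorem~5.3.30]{BR2}, $\varphi_V$ is a factor state. If $V\ne V'$ are distinct vertices, then $\Phi(\varphi_V)=V\ne V'=\Phi(\varphi_{V'})$, so $\varphi_V\neq\varphi_{V'}$. The same theorem then says that two distinct extremal $\KMS_1$ states are disjoint. This gives the required family, indexed by the finitely many vertices of $\mathcal P$.

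I expect no real obstacle. The one point needing care is the face argument: it must be stated for convex combinations in $K_1$ itself, not merely for combinations of extreme points. The argument above already handles this, since it uses only that $\Phi$ maps into $\mathcal P$ and that $V$ is a vertex there.
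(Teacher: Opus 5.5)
Your proposal is correct and follows essentially the same route as the paper's proof. The fibre is closed, convex and a face by continuity, affineness and the vertex property. Krein--Milman then gives an extreme point, which is extremal in $K_1$. Finally, \cite[Theorem~5.3.30]{BR2} gives factoriality and disjointness, and the states are pairwise distinct because $\Phi(\varphi_V)=V$ separates them.
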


\begin{proof}
Write $K_1$ for the simplex of $\KMS_1$ states and fix a vertex $V$ of
$\mathcal P$. The fibre $F=\Phi^{-1}(V)$ is nonempty by surjectivity of $\Phi$,
weak-$*$ closed by its continuity, and convex since $\Phi$ is affine.

We claim $F$ is a face of $K_1$. Let $\varphi\in F$ and suppose
$\varphi=\lambda\varphi_1+(1-\lambda)\varphi_2$ with
$\varphi_1,\varphi_2\in K_1$ and $0<\lambda<1$. Applying $\Phi$ gives
$V=\lambda\Phi(\varphi_1)+(1-\lambda)\Phi(\varphi_2)$, a convex combination of
two points of $\mathcal P$; as $V$ is an extreme point of $\mathcal P$ we get
$\Phi(\varphi_1)=\Phi(\varphi_2)=V$, that is, $\varphi_1,\varphi_2\in F$.

Being a nonempty weak-$*$ compact convex set, $F$ has an extreme point
$\varphi_V$ by the Krein--Milman theorem, and an extreme point of a face is an
extreme point of the whole set. Hence $\varphi_V$ is an extremal $\KMS_1$ state
with $\Phi(\varphi_V)=V$. By \cite[Theorem~5.3.30]{BR2} these states are
factorial, and distinct ones are disjoint; choosing one $\varphi_V $ for each
vertex gives states that are pairwise distinct, since $\Phi(\varphi_V)=V$
separates them.
\end{proof}

Theorem~\ref{thm:vertices} therefore yields at least as many extremal factorial
$\KMS_1$ states as there are vertices of $\mathcal P$. Since the marginals
are uniform in the present setting, $\mathcal P$ is, up to scaling, the
central transportation polytope of size $m\times n$. The
combinatorics of the vertices of transportation polytopes is well
studied; see, for example,~\cite[Section~2.2]{DeLoeraKim2014}.
In the diagonal case $m=n=r$, the vertex set is particularly
simple and will be described explicitly in Proposition~\ref{prop:diagonal-fibres}.

\begin{example}\label{ex:O23}
For $\mathcal O_{2,3}$, equation \eqref{eq:partition} reads
$2^{1-\beta}=3^{1-\beta}$, so only $\beta=1$ is possible, and the polytope
$\mathcal P$ has dimension $2$. Writing the first row as $(x,y,z)$ identifies
$\mathcal P$ with $\{x+y+z=\tfrac12,\ 0\le x,y,z\le\tfrac13\}$, a hexagon with
six vertices, obtained from the triangle $x+y+z=\tfrac12$ by truncating its
three corners; see Figure~\ref{fig:hexagon}.

\begin{figure}[H]
\centering
\begin{tikzpicture}[
scale=0.5,
  x={(-0.75cm,-0.45cm)}, y={(1.25cm,0cm)}, z={(0cm,1.20cm)}, >=Latex,
  cube/.style={gray!55, thin},
  vert/.style={circle, fill=black, inner sep=1.3pt},
  ctr/.style={circle, draw=black, fill=white, inner sep=1.2pt}]
  \foreach \a/\b in {{0,0,0}/{2,0,0}, {0,0,0}/{0,2,0}, {0,0,0}/{0,0,2},
                     {2,2,2}/{0,2,2}, {2,2,2}/{2,0,2}, {2,2,2}/{2,2,0},
                     {2,0,0}/{2,2,0}, {2,0,0}/{2,0,2}, {0,2,0}/{2,2,0},
                     {0,2,0}/{0,2,2}, {0,0,2}/{2,0,2}, {0,0,2}/{0,2,2}}
     \draw[cube] (\a) -- (\b);
  \fill[gray!30, opacity=0.75]
     (2,1,0)--(2,0,1)--(1,0,2)--(0,1,2)--(0,2,1)--(1,2,0)--cycle;
  \draw[thick] (2,1,0)--(2,0,1)--(1,0,2)--(0,1,2)--(0,2,1)--(1,2,0)--cycle;
  \draw[dashed, black!55] (2,1,0)--(0,1,2);
  \foreach \p in {{2,1,0},{2,0,1},{1,0,2},{0,1,2},{0,2,1},{1,2,0}}
     \node[vert] at (\p) {};
  \node[ctr] at (1,1,1) {};
  \node[font=\footnotesize, anchor=north east] at (2,1,0) {$V$};
  \node[font=\footnotesize, anchor=south west] at (0,1,2) {$V'$};
  \draw[->, gray] (0,0,0) -- (2.75,0,0)
     node[anchor=north east, font=\footnotesize, black] {$c_{11}$};
  \draw[->, gray] (0,0,0) -- (0,2.75,0)
     node[anchor=west, font=\footnotesize, black] {$c_{12}$};
  \draw[->, gray] (0,0,0) -- (0,0,2.75)
     node[anchor=south, font=\footnotesize, black] {$c_{13}$};
\end{tikzpicture}
\caption{The transportation polytope $\mathcal P$ for $\mathcal O_{2,3}$,
realized in the first row $(c_{11},c_{12},c_{13})$. The section is a regular
hexagon whose six vertices are midpoints of cube edges, and whose six edges are
the facets $\{c_{ij}=0\}$ of Proposition~\ref{prop:notinjective}.}
\label{fig:hexagon}
\end{figure}
Two opposite vertices are
$$
        V=
        \begin{pmatrix}
        1/3 & 1/6 & 0\\
        0   & 1/6 & 1/3
        \end{pmatrix},
        \qquad
        V'=
        \begin{pmatrix}
        0 & 1/6 & 1/3\\
        1/3 & 1/6 & 0
        \end{pmatrix},
$$
with midpoint the uniform matrix with all entries $1/6$. A concrete interval
model for $V$ is obtained by taking $X=[0,1)$, $Q=[0,1/2)$, $P=[1/2,1)$ and
splitting
$$
        P_1=[1/2,3/4),\ P_2=[3/4,1);
        \qquad
        R_1=[1/2,2/3),\ R_2=[2/3,5/6),\ R_3=[5/6,1),
$$
so that $\mu(P_1\cap R_1)=1/6$, $\mu(P_1\cap R_2)=1/12$, $\mu(P_1\cap R_3)=0$,
and symmetrically in the second row. The associated state satisfies
$\varphi(f_{11})=1/6$, $\varphi(f_{12})=1/12$ and $\varphi(f_{13})=0$, whereas
the state associated with the uniform matrix gives $\varphi(f_{11})=1/12$. In
the corner, with $\theta_1(x)=\tfrac12+\tfrac x2$, the sets of
Proposition~\ref{prop:corner-model} are
$$
        Q_{11}=[0,\tfrac13),\qquad Q_{12}=[\tfrac13,\tfrac12),
        \qquad Q_{13}=\emptyset,
$$
of $\nu$-measures $\tfrac23,\tfrac13,0$, that is $mc_{1j}$ as in
Lemma~\ref{lem:realize-transport}. By Theorem~\ref{thm:vertices},
$\mathcal O_{2,3}$ carries at least six pairwise disjoint extremal factorial
$\KMS_1$ states, one for each vertex of the polytope.
\end{example}

In the next result our goal is to identify the fibres over the vertices of
$\mathcal P$ with $\KMS_1$ simplices of quotients of $A$. We shall use a
standard combinatorial description of these vertices. Let $K_{m,n}$ be the
complete bipartite graph whose two sides are the vertices $P_1,\dots,P_m$ and
$R_1,\dots,R_n$, and associate to a matrix $C\in\mathcal P$ the subgraph having
an edge between $P_i$ and $R_j$ precisely when $c_{ij}>0$. It is well known
that $C$ is a vertex of $\mathcal P$ if and only if its associated subgraph is
a forest, that is, contains no cycle; see e.g.\ the survey
\cite[Lemma~2.9]{DeLoeraKim2014}.

\begin{example}
For $(m,n)=(2,3)$, the matrix $V$ of Example~\ref{ex:O23} has associated
subgraph
\begin{center}
\begin{tikzpicture}[
    scale=0.5,
    dot/.style={circle, fill=black, inner sep=1.6pt},
    every label/.style={font=\footnotesize}
]
  \node[dot, label=left:{$P_1$}]  (P1) at (0, 1.2) {};
  \node[dot, label=left:{$P_2$}]  (P2) at (0, 0)   {};
  \node[dot, label=right:{$R_1$}] (R1) at (2.6, 1.8) {};
  \node[dot, label=right:{$R_2$}] (R2) at (2.6, 0.6) {};
  \node[dot, label=right:{$R_3$}] (R3) at (2.6,-0.6) {};
  \draw[dashed, black!40] (P1) -- (R3);
  \draw[dashed, black!40] (P2) -- (R1);
  \draw[thick] (P1) -- (R1);
  \draw[thick] (P1) -- (R2);
  \draw[thick] (P2) -- (R2);
  \draw[thick] (P2) -- (R3);
\end{tikzpicture}
\end{center}
where the dashed edges correspond to the vanishing entries. Being a tree, it
follows that $V$ is a vertex of $\mathcal P$. The uniform matrix, by contrast,
has all six edges present and is not a vertex.
\end{example}

\begin{lemma}\label{lem:vertex}
Assume $m,n\ge2$. Let $V=(c_{ij})$ be a vertex of $\mathcal P$ and
let $Z=\{(i,j):c_{ij}=0\}$ be the complement of its support. Let $J_Z$ be the
closed two-sided ideal of $A$ generated by $\{v_{ij}:(i,j)\in Z\}$, and $I_Z$
the closed two-sided ideal of $\Omn$ generated by
$\{f_{ij}:(i,j)\in Z\}$. Then:
\begin{enumerate}[label=\textup{(\roman*)}]
\item $J_Z$ is $\sigma^A$-invariant and $I_Z$ is $\sigma$-invariant; the
respective dynamics descend to strongly continuous actions $\widetilde\sigma^A$ on
$A/J_Z$ and $\widetilde\sigma$ on $\Omn/I_Z$;
\item $\Xi(I_Z)=M_{m+1}(J_Z)$; in particular $J_Z=I_Z\cap A$ and
$\Omn/I_Z\cong M_{m+1}(A/J_Z)$;
\item for a $\KMS_1$ state $\varphi$ of $(\Omn,\sigma)$, with
$\psi=2\varphi|_A$, the following are equivalent: $\Phi(\varphi)=V$;\ \
$\varphi$ vanishes on $I_Z$;\ \ $\psi$ vanishes on $J_Z$;
\item $J_Z$ is a proper nonzero ideal of $A$.
\end{enumerate}
\end{lemma}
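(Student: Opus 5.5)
The plan is to prove the four items in the order (i), (ii), (iii), (iv). Two identities do most of the work: \eqref{eq:f-vs-v}, which moves between $f_{ij}$ and $v_{ij}v_{ij}^*$, and the matrix picture $\Xi$ of Proposition~\ref{prop:matrix-picture}. For (i), each generator $v_{ij}$ is an eigenvector, $\sigma^A_t(v_{ij})=e^{it\varepsilon}v_{ij}$. Each $f_{ij}$ is fixed by $\sigma$, since $\sigma_t(s_is_i^*)=s_is_i^*$ and $\sigma_t(t_jt_j^*)=t_jt_j^*$. Hence $\sigma^A_t$ maps the generating set of $J_Z$ into $J_Z$, and likewise $\sigma_t$ for $I_Z$, so both ideals are invariant. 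The induced actions on the quotients are strongly continuous because the quotient maps are contractive.

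For (ii), I will first show that $I_Z$ is also the closed ideal of $\Omn$ generated by $\{v_{ij}:(i,j)\in Z\}$. By \eqref{eq:f-vs-v}, $v_{ij}v_{ij}^*=s_i^*f_{ij}s_i$ lies in the ideal generated by $f_{ij}$, and so does $v_{ij}=(v_{ij}v_{ij}^*)v_{ij}$. Conversely $f_{ij}=s_iv_{ij}v_{ij}^*s_i^*$ lies in the ideal generated by $v_{ij}$. Next I claim $qI_Zq=J_Z$. The space $qI_Zq$ is the closed span of the elements $qxv_{ij}yq$ with $x,y\in\Omn$. Since $v_{ij}=qv_{ij}q$, each such element equals $(qxq)v_{ij}(qyq)$, with $qxq,qyq\in A$, so it lies in $J_Z$. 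The reverse inclusion is clear. Then $\Xi(x)_{ab}=u_a^*xu_b\in qI_Zq=J_Z$ for $x\in I_Z$. Conversely, for $y\in J_Z$ the element $u_ayu_b^*$ lies in $I_Z$, and $\Xi$ maps it to $E_{ab}\otimes y$. This gives $\Xi(I_Z)=M_{m+1}(J_Z)$. The equality $J_Z=I_Z\cap A$ and the isomorphism of quotients follow at once.

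For (iii) I will need a combinatorial fact. A point $C\in\mathcal P$ with $c_{ij}=0$ for all $(i,j)\in Z$ must equal $V$, because the support of $V$ is a forest. I would prove this by leaf-peeling. Take a leaf of the support forest. Its single incident entry is forced by the corresponding row or column margin. Delete that edge, adjust the remaining margin at the other endpoint, and repeat until all edges are removed; this determines every entry on the support. Alternatively, the face $\{c_Z=0\}$ of $\mathcal P$ contains $V$ and, having the same support, has dimension zero. With this fact, $\Phi(\varphi)=V$ holds exactly when $\varphi(f_{ij})=0$ for all $(i,j)\in Z$. It remains to show that vanishing on a projection $f$ forces vanishing on the whole ideal $\Omn f\Omn$; for a general state this fails, so I will use the KMS property. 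For analytic $x$ and arbitrary $y$, $\varphi(xfy)=\varphi(fy\sigma_{i}(x))$. By Cauchy--Schwarz, $|\varphi(fw)|^2\le\varphi(f)\varphi(w^*w)=0$. Density then gives vanishing on $I_Z$. The same argument applied to $\psi$, using $\psi(v_{ij}v_{ij}^*)=mc_{ij}$ from Lemma~\ref{lem:dictionary} together with $v_{ij}=v_{ij}v_{ij}^*v_{ij}$, shows that $\psi$ vanishes on $J_Z$ if and only if $c_{ij}=0$ on $Z$. I expect this step to be the main obstacle.

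For (iv), the ideal $J_Z$ is proper because the fibre $\Phi^{-1}(V)$ is nonempty by Proposition~\ref{prop:Phi}. Any $\psi$ arising from this fibre vanishes on $J_Z$ by (iii), while $\psi(q)=1$. For nonzeroness, I first check that $Z\neq\emptyset$. The uniform matrix lies in the relative interior of $\mathcal P$, and $\dim\mathcal P>0$ for $m,n\ge2$. So a vertex cannot have all entries positive; otherwise it would lie in the relative interior, where no point is extreme. Pick $(i,j)\in Z$. The uniform branching model of Theorem~\ref{thm:existence-asymmetric} gives $\psi(v_{ij}v_{ij}^*)=m/(mn)>0$, so $v_{ij}\ne0$ and hence $J_Z\ne0$.
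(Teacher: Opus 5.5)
Your proof is correct. It rests on the same core mechanisms as the paper's:
\begin{itemize}
\item the KMS identity plus Cauchy--Schwarz, to pass from vanishing on a projection to vanishing on the closed ideal it generates;
\item the forest structure of the support of $V$, to force $\mathcal P_Z=\{V\}$;
\item surjectivity of $\Phi$, for properness;
\item the uniform branching model, for nonzeroness.
\end{itemize}

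It differs locally in a few places.

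In (ii), the paper computes $\Xi(f_{ij})=E_{ii}\otimes v_{ij}v_{ij}^*$ and appeals to the description of the closed ideals of $M_{m+1}(A)$ as the $M_{m+1}(J)$. You instead show that $I_Z$ is also generated by the $v_{ij}$, prove $qI_Zq=J_Z$ directly from $v_{ij}=qv_{ij}q$, and read off matrix entries with $u_a^*(\cdot)u_b$ and $u_a(\cdot)u_b^*$. This is self-contained and avoids the matrix-ideal fact.

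In (iii), the paper runs the cycle $\Phi(\varphi)=V\Rightarrow\psi|_{J_Z}=0\Rightarrow\varphi|_{I_Z}=0\Rightarrow\Phi(\varphi)=V$. Its middle step rests on the reconstruction formula \eqref{eq:reconstruction} together with (ii). You prove two parallel biconditionals pivoting on the vanishing of $\Phi(\varphi)$ on $Z$. You run the vanishing argument once in $\Omn$ with $f_{ij}$ and once in $A$ with $v_{ij}v_{ij}^*$. This makes (iii) independent of (ii) and of the reconstruction formula, at the cost of doing the KMS argument twice.

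For $\mathcal P_Z=\{V\}$, your leaf-peeling shows directly that the margins determine the entries on a forest. The paper instead argues that every point of $\mathcal P_Z$ has forest support, hence is a vertex, so the compact convex set $\mathcal P_Z$ is a singleton.

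For $Z\neq\emptyset$, the paper uses the edge count $|Z|\ge(m-1)(n-1)$. You use instead that a point with all entries positive lies in the relative interior of a polytope of positive dimension, so it cannot be extreme.

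One cosmetic point: in (iii), write $\Phi(\varphi)_{ij}$ rather than $c_{ij}$, since $c_{ij}$ already denotes the entries of $V$.
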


\begin{proof}
\begin{enumerate}[label=\textup{(\roman*)}]
\item By \eqref{eq:corner-dynamics} each generator of $J_Z$ satisfies
$\sigma^A_t(v_{ij})=e^{it\varepsilon}v_{ij}$, a scalar multiple of itself, so
$\sigma^A_t(J_Z)=J_Z$; and $\sigma_t(f_{ij})=f_{ij}$ because
$\sigma_t(s_is_i^*)=s_is_i^*$ and $\sigma_t(t_jt_j^*)=t_jt_j^*$, so
$\sigma_t(I_Z)=I_Z$. In both cases the quotient action is a one-parameter group
of $*$-automorphisms, strongly continuous because the quotient map is
contractive.

\item Since $v_{ij}$ is a partial isometry, $v_{ij}=(v_{ij}v_{ij}^*)v_{ij}$, so
$v_{ij}$ and $v_{ij}v_{ij}^*$ generate the same closed ideal; hence $J_Z$ is
generated by $\{v_{ij}v_{ij}^*:(i,j)\in Z\}$. By \eqref{eq:f-vs-v} and
Proposition~\ref{prop:matrix-picture},
$$
        \Xi(f_{ij})_{ab}=u_a^*f_{ij}u_b
        =\delta_{ai}\delta_{bi}s_i^*f_{ij}s_i
        =\delta_{ai}\delta_{bi}v_{ij}v_{ij}^*,
$$
that is $\Xi(f_{ij})=E_{ii}\otimes v_{ij}v_{ij}^*$.
The closed ideals of $M_{m+1}(A)$ are exactly the $M_{m+1}(J)$ with $J$ a
closed ideal of $A$, and the ideal generated by $E_{ii}\otimes x$ is
$M_{m+1}(\langle x\rangle)$. Hence $\Xi(I_Z)=M_{m+1}(J_Z)$, and intersecting
with $E_{00}\otimes A$ gives $J_Z=I_Z\cap A$.

\item Suppose first that $\psi(v_{ij}v_{ij}^*)=0$ for every $(i,j)\in Z$; we
show that $\psi$ vanishes on $J_Z$. Fix such an $(i,j)$, write
$e=v_{ij}v_{ij}^*$ and let $(H_\psi,\pi_\psi,\xi)$ be the GNS triple of $\psi$.
As $e$ is a projection, $\|\pi_\psi(e)\xi\|^2=\psi(e)=0$, hence
$\pi_\psi(e)\xi=0$ and
\begin{equation}\tag{$\dagger\dagger$}\label{eq:e-left}
        \psi(eb)=\langle\pi_\psi(b)\xi,\pi_\psi(e)\xi\rangle=0
        \qquad (b\in A).
\end{equation}
For $a,b$ entire analytic, the element $eb$ is entire analytic and
$\sigma^A_i(e)=e$, so the $\KMS_1$ identity applied to the pair $a$, $eb$ gives
$$
        \psi(aeb)=\psi\left(eb\sigma^A_i(a)\right)
        =\psi\left(e(b\sigma^A_i(a))\right)=0
$$
by \eqref{eq:e-left}. By density and continuity $\psi(aeb)=0$ for all $a,b$,
and since the closed ideal generated by $e$ is the closed span of such
products, $\psi$ vanishes on $J_Z$.

Put $\mathcal P_Z=\{C\in\mathcal P: c_{ij}=0\text{ for }(i,j)\in Z\}$. If
$C\in\mathcal P_Z$, its support is contained in that of $V$, which is a forest
because $V$ is a vertex; hence $C$ is itself a vertex. So every point of the
compact convex set $\mathcal P_Z$ is extreme, forcing $\mathcal P_Z=\{V\}$.

Now the three conditions. If $\Phi(\varphi)=V$ then, by
Lemma~\ref{lem:dictionary}, $\psi(v_{ij}v_{ij}^*)=mc_{ij}=0$ for
$(i,j)\in Z$, so $\psi$ vanishes on $J_Z$ by the previous paragraph. If $\psi$
vanishes on $J_Z$ then $\varphi$ vanishes on $I_Z$: by \eqref{eq:reconstruction}
at $\beta=1$,
$$
        \varphi(x)=\tfrac12\Big(\psi(qxq)+m^{-1}\sum_i\psi(s_i^*xs_i)\Big),
$$
and for $x\in I_Z$ both $qxq$ and $s_i^*xs_i$ lie in $I_Z\cap A=J_Z$ by (ii).
Finally, if $\varphi$ vanishes on $I_Z$ then in particular
$\varphi(f_{ij})=0$ for $(i,j)\in Z$, hence
$\Phi(\varphi)\in\mathcal P_Z=\{V\}$.

\item Since $V$ is a vertex, its support is a forest in $K_{m,n}$ and therefore
has at most $m+n-1$ edges, so that $|Z|\ge mn-(m+n-1)=(m-1)(n-1)\ge1$; in
particular $Z\ne\emptyset$. Moreover each $v_{ij}$ is nonzero: by
Lemma~\ref{lem:realize-transport} the uniform matrix $c_{ij}=1/(mn)$ is
realized by a branching model, and the associated state satisfies
$\psi(v_{ij}v_{ij}^*)=1/n>0$ by Proposition~\ref{prop:distinguish-C}. Hence
$J_Z$ is nonzero. Finally, taking $\varphi$ with $\Phi(\varphi)=V$, which
exists by Proposition~\ref{prop:Phi}, part~(iii) gives $\psi|_{J_Z}=0$ while
$\psi(q)=1$, so $J_Z$ is proper.
\end{enumerate}
\end{proof}

\begin{theorem}\label{thm:fibres-vertex}
In the notation of Lemma~\ref{lem:vertex}, let
$\pi_A\colon A\to A/J_Z$ be the quotient map. Then
$$
        K_1\left(A/J_Z,\widetilde\sigma^A\right)
        \xrightarrow{\ \cong\ }
        \Phi^{-1}(V),
        \qquad
        \widetilde\psi\longmapsto R_1^{-1}\big(\widetilde\psi\circ\pi_A\big),
$$
is an affine weak-$*$ homeomorphism. The same holds with $(A,J_Z)$ replaced by
$(\Omn,I_Z)$ and the induced dynamics $\widetilde\sigma$.
\end{theorem}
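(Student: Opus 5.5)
The plan is to factor the map as a composite of two affine homeomorphisms: the pull-back along $\pi_A$, followed by $R_1^{-1}$ from Theorem~\ref{thm:corner-reduction}. The identification of the image is supplied by Lemma~\ref{lem:vertex}(iii).

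First I would show that pulling back along the quotient is an affine weak-$*$ homeomorphism
$$K_1\left(A/J_Z,\widetilde\sigma^A\right)\to\{\psi\in K_1(A,\sigma^A):\psi|_{J_Z}=0\}.$$
For this, note that $\pi_A$ is unital and equivariant: it intertwines $\sigma^A$ and $\widetilde\sigma^A$ by Lemma~\ref{lem:vertex}(i). It therefore carries $\sigma^A$-entire analytic elements to $\widetilde\sigma^A$-entire analytic elements, with $\pi_A(\sigma^A_z(a))=\widetilde\sigma^A_z(\pi_A(a))$. Consequently $\widetilde\psi\circ\pi_A$ satisfies the $\KMS_1$ identity on a dense invariant set of analytic elements, and so it is a $\KMS_1$ state vanishing on $J_Z$.

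Conversely, a $\KMS_1$ state $\psi$ of $A$ vanishing on $J_Z$ factors as $\widetilde\psi\circ\pi_A$ for a unique state $\widetilde\psi$. To see that $\widetilde\psi$ is $\KMS_1$, I would use that the images $\pi_A(A_{\mathrm{an}})$ of the $\sigma^A$-analytic elements form a dense, $\widetilde\sigma^A$-invariant $*$-subalgebra of entire analytic elements. It suffices to check the KMS identity there, and that check is immediate. Concretely, one can take the images of the polynomial algebra $A_0$ in the $v_{ij}$, which are homogeneous, so $\sigma^A_z$ acts on them by scalars.

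The pull-back map is affine, injective because $\pi_A$ is surjective, and weak-$*$ continuous. A continuous bijection from a compact space onto a Hausdorff space is a homeomorphism, so this settles the first step.

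Next, $R_1^{-1}$ is an affine weak-$*$ homeomorphism $K_1(A,\sigma^A)\to K_1(\Omn,\sigma)$. By the equivalence in Lemma~\ref{lem:vertex}(iii), namely that $\psi=R_1(\varphi)$ vanishes on $J_Z$ if and only if $\Phi(\varphi)=V$, it restricts to a bijection from the set of $\KMS_1$ states of $A$ vanishing on $J_Z$ onto $\Phi^{-1}(V)$. Composing the two steps gives the asserted homeomorphism.

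For the version with $(\Omn,I_Z)$, I would repeat the first step verbatim with $\Omn$, $I_Z$ and $\widetilde\sigma$, using the invariance of $I_Z$ from Lemma~\ref{lem:vertex}(i). The fibre $\Phi^{-1}(V)$ is then exactly the set of $\KMS_1$ states of $\Omn$ vanishing on $I_Z$, again by Lemma~\ref{lem:vertex}(iii), so no appeal to $R_1$ is needed in this case.

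The only point needing care is the density of analytic elements in the quotient when descending the KMS condition. Using the homogeneous dense subalgebra $A_0$, respectively the $*$-algebra generated by the $s_i,t_j$ in $\Omn$, avoids any subtlety: both the analytic extension and the KMS identity are explicit on homogeneous words, and they pass to quotients directly.
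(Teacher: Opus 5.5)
Your proposal is correct and follows essentially the same route as the paper. The paper likewise uses Lemma~\ref{lem:vertex}(iii) to restrict $R_1$ to a homeomorphism between $\Phi^{-1}(V)$ and the $\KMS_1$ states of $A$ vanishing on $J_Z$, identifies these with $K_1(A/J_Z,\widetilde\sigma^A)$ by pulling back along $\pi_A$ (checking the converse on the homogeneous dense subalgebra $\pi_A(A_0)$ and concluding by compactness), and treats $(\Omn,I_Z)$ the same way via Lemma~\ref{lem:vertex}(iii).
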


\begin{proof}
By \eqref{eq:R1} and Lemma~\ref{lem:vertex}(iii), $R_1$ restricts to an affine
weak-$*$ homeomorphism from $\Phi^{-1}(V)$ onto the set of $\KMS_1$ states of
$(A,\sigma^A)$ vanishing on $J_Z$. It therefore suffices to identify the latter
with $K_1(A/J_Z,\widetilde\sigma^A)$ through $\widetilde\psi\mapsto\widetilde\psi\circ\pi_A$.

If $\widetilde\psi$ is $\KMS_1$ for $\widetilde\sigma^A$, then $\widetilde\psi\circ\pi_A$
is a state vanishing on $J_Z$, and it is $\KMS_1$ for $\sigma^A$ because
$\pi_A$ is unital and equivariant. Conversely, a $\KMS_1$ state $\psi$ of
$(A,\sigma^A)$ vanishing on $J_Z$ factors as $\psi=\widetilde\psi\circ\pi_A$ for a
unique state $\widetilde\psi$ of $A/J_Z$. To see that $\widetilde\psi$ is $\KMS_1$, let
$A_0$ be the $*$-algebra generated algebraically by the $v_{ij}$; by
Proposition~\ref{prop:corner-generators} it is dense in $A$, and it is spanned
by words in the $v_{ij}$ and their adjoints, each of which is homogeneous for
$\sigma^A$ by \eqref{eq:corner-dynamics} and hence entire analytic. Thus
$\pi_A(A_0)$ is a dense, $\widetilde\sigma^A$-invariant $*$-subalgebra of $A/J_Z$
consisting of entire analytic elements, and the $\KMS_1$ identity for
$\widetilde\psi$ on $\pi_A(A_0)$ is exactly the $\KMS_1$ identity for $\psi$ on
$A_0$. Hence $\widetilde\psi\in K_1(A/J_Z,\widetilde\sigma^A)$. The map is injective
because $\pi_A$ is surjective, and affinity and weak-$*$ continuity are
immediate; a continuous bijection between weak-$*$ compact sets is a
homeomorphism.

The statement for $\Omn/I_Z$ follows in the same way from
Lemma~\ref{lem:vertex}(iii), or from Lemma~\ref{lem:vertex}(ii) and the
identification $\Omn/I_Z\cong M_{m+1}(A/J_Z)$.
\end{proof}

\begin{example}\label{ex:fibre}
Continue with $\mathcal O_{2,3}$ and the vertex $V$ of Example~\ref{ex:O23}, so
that $Z=\{(1,3),(2,1)\}$ and $J_Z$ is generated by $v_{13}$ and $v_{21}$.
Writing the images in $A/J_Z$ with the same letters, the relations
\eqref{eq:corner-relations} become
$$
        v_{11}v_{11}^*+v_{12}v_{12}^*=q,
        \qquad
        v_{22}v_{22}^*+v_{23}v_{23}^*=q,
$$
$$
        v_{11}^*v_{11}=q,
        \qquad
        v_{12}^*v_{12}+v_{22}^*v_{22}=q,
        \qquad
        v_{23}^*v_{23}=q .
$$
In particular $v_{11}$ and $v_{23}$ have become isometries. Put
$e=v_{11}v_{11}^*$ and $f=v_{23}v_{23}^*$; by
Remark~\ref{rem:inverse-semigroup} these are commuting projections.

Let $\widetilde\psi\in K_1(A/J_Z,\widetilde\sigma^A)$. Then
$\widetilde\psi(v_{11}^*v_{11})=\widetilde\psi(q)=1$, and
Lemma~\ref{lem:basic-kms} with $\beta=1$ and $E=\varepsilon=\log(3/2)$ gives
$\widetilde\psi(e)=\tfrac23$, and likewise $\widetilde\psi(f)=\tfrac23$. Since
$e$ and $f$ commute, $ef$ is a projection with $ef\le e$, and
$e\vee f=e+f-ef\le q$. Applying $\widetilde\psi$ to these two relations gives
\begin{equation}\tag{$\ddagger\ddagger$}\label{eq:overlap}
        \tfrac13=\widetilde\psi(e)+\widetilde\psi(f)-\widetilde\psi(q)
        \le\widetilde\psi(ef)\le\widetilde\psi(e)=\tfrac23 .
\end{equation}
Every value in this interval occurs. Fix $\delta\in[0,\tfrac16]$ and take the
interval model of Example~\ref{ex:O23}, that is $X=[0,1)$, $Q=[0,\tfrac12)$,
$P=[\tfrac12,1)$ with
$$
        P_1=[\tfrac12,\tfrac34),\ P_2=[\tfrac34,1);
        \qquad
        R_1=[\tfrac12,\tfrac23),\ R_2=[\tfrac23,\tfrac56),\
        R_3=[\tfrac56,1),
$$
let $\theta_1(x)=\tfrac12+\tfrac x2$ and let the $\eta_j$ be affine, but
replace $\theta_2$ by the piecewise affine bijection
$\theta_2^{(\delta)}\colon Q\to P_2$ of constant slope $\tfrac12$, as in
Figure~\ref{fig:theta-delta}.

\begin{figure}[ht]
\centering
\def\dd{0.0833}
\pgfmathsetmacro{\dhalf}{\dd/2}
\pgfmathsetmacro{\xb}{\dd+1/3}
\pgfmathsetmacro{\yR}{1/12}
\begin{minipage}{0.47\textwidth}
\[
\theta_2^{(\delta)}(x)=
\begin{cases}
\tfrac34+\tfrac x2, & x\in[0,\delta),\\[3pt]
\tfrac56+\tfrac{x-\delta}{2}, & x\in[\delta,\delta+\tfrac13),\\[3pt]
\tfrac34+\tfrac\delta2+\tfrac{x-\delta-1/3}{2}, &
x\in[\delta+\tfrac13,\tfrac12).
\end{cases}
\]
\end{minipage}%
\begin{minipage}{0.44\textwidth}
\centering
\begin{tikzpicture}[xscale=6.2,yscale=9,>=stealth,
  clo/.style={fill,circle,inner sep=0.9pt},
  ope/.style={draw,fill=white,circle,inner sep=0.85pt,line width=0.4pt}]
  \draw[->] (-0.015,0) -- (0.55,0) node[right,font=\scriptsize] {$x$};
  \draw[->] (0,-0.015) -- (0,0.285);
  \draw[gray!50,dashed,thin] (0,\yR) -- (0.5,\yR);
  \draw[gray!50,dashed,thin] (\dd,0) -- (\dd,0.25);
  \draw[gray!50,dashed,thin] (\xb,0) -- (\xb,0.25);
  \draw[thick] (0,0) -- (\dd,\dhalf);
  \draw[thick] (\dd,\yR) -- (\xb,0.25);
  \draw[thick] (\xb,\dhalf) -- (0.5,\yR);
  \node[clo] at (0,0) {};        \node[ope] at (\dd,\dhalf) {};
  \node[clo] at (\dd,\yR) {};    \node[ope] at (\xb,0.25) {};
  \node[clo] at (\xb,\dhalf) {}; \node[ope] at (0.5,\yR) {};
  \foreach \x/\l in {\dd/$\delta$,\xb/$\delta+\frac13$,0.5/$\frac12$}
    \draw (\x,0) -- (\x,-0.01) node[below,font=\scriptsize] {\l};
  \node[below left,font=\scriptsize,inner sep=1pt] at (-0.004,-0.002) {$0$};
  \foreach \y/\l in {\yR/$\frac56$,0.25/$1$}
    \draw (0,\y) -- (-0.009,\y) node[left,font=\scriptsize] {\l};
  \node[left,font=\scriptsize] at (-0.009,0.006) {$\frac34$};
\end{tikzpicture}
\end{minipage}
\caption{The bijection $\theta_2^{(\delta)}\colon Q\to P_2$.}
\label{fig:theta-delta}
\end{figure}

The middle piece has measure $\tfrac13$ and image $R_3$ of measure $\tfrac16$,
while the outer two have total measure $\tfrac16$ and images
$[\tfrac34,\tfrac34+\tfrac\delta2)$ and $[\tfrac34+\tfrac\delta2,\tfrac56)$,
together $P_2\setminus R_3$ of measure $\tfrac1{12}$. The slope being
$\tfrac12$ throughout, $\mu(\theta_2^{(\delta)}(E))=\tfrac12\mu(E)$ for every
measurable $E\subseteq Q$, so \eqref{eq:conformal-basic} holds and the model is
conformal. The sets $P_i$ and $R_j$ are untouched, so all these models share
the first-level data $V$: the $\KMS_1$ states $\varphi_\delta$ of
Theorem~\ref{thm:existence-asymmetric} satisfy $\Phi(\varphi_\delta)=V$, and
the corresponding states $\psi_\delta=2\varphi_\delta|_A$ of $(A,\sigma^A)$
factor through $A/J_Z$ by Lemma~\ref{lem:vertex}(iii). By
Proposition~\ref{prop:corner-model},
$$
        \pi_X(e)=1_{Q_{11}}\delta_e=1_{[0,1/3)}\delta_e,
        \qquad
        \pi_X(f)=1_{Q_{23}}\delta_e=1_{[\delta,\delta+1/3)}\delta_e ,
$$
hence
$$
        \psi_\delta(ef)=\nu\left([\delta,\tfrac13)\right)
        =2\left(\tfrac13-\delta\right)=\tfrac23-2\delta .
$$
Thus $\psi\mapsto\psi(ef)$ attains every value of $[\tfrac13,\tfrac23]$ on
$R_1\left(\Phi^{-1}(V)\right)$, and the bounds in \eqref{eq:overlap} are sharp.
Consequently $\Phi^{-1}(V)$ has affine dimension at least one, and by
Theorem~\ref{thm:fibres-vertex} the quotient system
$(A/J_Z,\widetilde\sigma^A)$ has more than one $\KMS_1$ state.

Since $\psi\mapsto\psi(ef)$ is affine and weak-$*$ continuous and $R_1$ is an
affine homeomorphism, the Bauer maximum principle applied on the compact convex
set $R_1\left(\Phi^{-1}(V)\right)$ shows that the extreme values are attained
at extreme points; hence $\Phi^{-1}(V)$ contains at least two extremal, and
therefore factorial and mutually disjoint, $\KMS_1$ states. Permuting
$t_1,t_2,t_3$ gives, by Proposition~\ref{prop:universal}, an equivariant
automorphism of $\mathcal O_{2,3}$, and the induced permutation of the columns
of $\mathcal P$ acts transitively on its six vertices; the same conclusion
therefore holds at each of them. In particular $\mathcal O_{2,3}$ carries at
least twelve pairwise disjoint extremal factorial $\KMS_1$ states, twice the
bound of Theorem~\ref{thm:vertices}. 

The states $\psi_\delta$ are separated by the value on $ef$, a product of
the range projections of two corner generators, and the higher-level data alluded to in the introduction may well distinguish further points of the fibre.
\end{example}

Although the quotients $A/J_Z$ appearing in Theorem~\ref{thm:fibres-vertex} are not known in general, they can be identified in the diagonal case.

\begin{proposition}\label{prop:diagonal-fibres}
Let $r\ge2$ and let $\Sigma_r$ be the permutation group on $r$ elements.
The vertices of $\mathcal P$ are the matrices $r^{-1}P_\varpi$, where $P_\varpi$ is
the permutation matrix of $\varpi\in \Sigma_r$, and for each of them the fibre
$\Phi^{-1}(r^{-1}P_\varpi)$ is affinely homeomorphic to
$\Tr\left(C^*(\mathbb F_r)\right)$. 
\end{proposition}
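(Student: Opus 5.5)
The argument has two parts: a combinatorial identification of the vertices, followed by an application of Theorem~\ref{thm:fibres-vertex} together with Proposition~\ref{prop:free-group-traces}.

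\emph{Vertices.} For $m=n=r$, the polytope is $\mathcal P=r^{-1}\mathcal B_r$, where $\mathcal B_r$ is the Birkhoff polytope of doubly stochastic matrices. By the Birkhoff--von Neumann theorem its vertices are the permutation matrices, so the vertices of $\mathcal P$ are exactly the matrices $r^{-1}P_\varpi$. One can also derive this from the forest criterion quoted before Lemma~\ref{lem:vertex}; I would mention it as a check but rely on Birkhoff. Write $\varpi$ so that $(P_\varpi)_{ij}=1$ exactly when $j=\varpi(i)$. The zero set is then $Z=\{(i,j):j\ne\varpi(i)\}$, and so the ideal $J_Z$ of Lemma~\ref{lem:vertex} coincides with the ideal $J_\varpi$ of Proposition~\ref{prop:free-group-traces}.

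\emph{Fibres.} Theorem~\ref{thm:fibres-vertex} gives an affine weak-$*$ homeomorphism $\Phi^{-1}(r^{-1}P_\varpi)\cong K_1(A/J_\varpi,\widetilde\sigma^A)$. Since $\varepsilon=\log(r/r)=0$, the corner dynamics $\sigma^A$ is trivial (Corollary~\ref{cor:corner-diagonal}), so the quotient dynamics $\widetilde\sigma^A$ is trivial too. The $\KMS_1$ condition for a trivial action is just the trace identity, hence $K_1(A/J_\varpi,\widetilde\sigma^A)=\Tr(A/J_\varpi)$. Proposition~\ref{prop:free-group-traces} provides an isomorphism $\bar\rho_\varpi\colon A/J_\varpi\cong C^*(\mathbb F_r)$. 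Composition with this isomorphism is an affine weak-$*$ homeomorphism of trace simplices. Chaining the three identifications gives $\Phi^{-1}(r^{-1}P_\varpi)\cong\Tr(C^*(\mathbb F_r))$.

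The only point needing care is the indexing convention: one must check that the zero set of $r^{-1}P_\varpi$ matches the generating set $\{v_{ij}:j\ne\varpi(i)\}$ of $J_\varpi$. If the convention runs the other way, replace $\varpi$ by $\varpi^{-1}$; the conclusion is unaffected since $\varpi$ ranges over all of $\Sigma_r$. The nonemptiness of each fibre is already guaranteed by Proposition~\ref{prop:Phi}, and it is also visible directly because $\Tr(C^*(\mathbb F_r))\ne\emptyset$. I expect no genuine obstacle: once the vertex identification is in place, everything else is assembling results already proved.
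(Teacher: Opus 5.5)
Your proposal is correct and follows the paper's proof essentially step for step: Birkhoff--von~Neumann gives the vertices, and then Theorem~\ref{thm:fibres-vertex} is combined with the triviality of the quotient dynamics (since $\varepsilon=0$) and the isomorphism $A/J_Z\cong C^*(\mathbb F_r)$. The only cosmetic difference is that you check $J_Z=J_\varpi$ and cite the isomorphism $\bar\rho_\varpi$ of Proposition~\ref{prop:free-group-traces} directly. The paper instead re-runs that argument: each corner relation has a single nonzero term, so the images of the $v_{i\varpi(i)}$ are unitaries generating the quotient.
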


\begin{proof}
For $m=n=r$ the constraints \eqref{eq:transport} read
$\sum_jc_{ij}=\sum_ic_{ij}=r^{-1}$, so $r\,\mathcal P$ is the Birkhoff
polytope of $r\times r$ matrices, whose vertices are the
permutation matrices by the Birkhoff--von~Neumann theorem (see e.g. \cite[Definition 2.5 and Theorem 2.12]{DeLoeraKim2014}).

Fix a permutation $\varpi\in \Sigma_r$ and let $Z=\{(i,j):j\ne\varpi(i)\}$ be the complement of the
support of $V=r^{-1}P_\varpi$. In $A/J_Z$ the relations
\eqref{eq:corner-relations} have a single nonzero term on each side, so the
images $\bar v_i$ of $v_{i\varpi(i)}$ are unitaries generating the quotient, and
the argument of Proposition~\ref{prop:free-group-traces} gives
$A/J_Z\cong C^*(\mathbb F_r)$. Since $\varepsilon=0$, the quotient dynamics is
trivial and $K_1(A/J_Z,\widetilde\sigma^{A})\cong\Tr(C^*(\mathbb F_r))$;
Theorem~\ref{thm:fibres-vertex} now identifies this with
$\Phi^{-1}(V)$.
\end{proof}

\section*{Acknowledgments}
The work of Paulo R. Pinto was funded by FCT/Portugal
and the Recovery and Resilience Plan (PRR) through projects UID/04459/2025 and UID/PRR/04459/2025. The work of Filipe Viseu was partially funded by FCT/Portugal through national funds (PIDDAC) under project UIDB/04459/2025 – IST-ID.

\section*{Data availability}
Data sharing is not applicable to this article, as no datasets were generated or analysed during the current study.

\end{document}